\newtheorem{theorem}{Theorem}
\numberwithin{theorem}{section}
\newtheorem{conjecture}[theorem]{Conjecture}
\newtheorem{proposition}[theorem]{Proposition}
\newtheorem{lemma}[theorem]{Lemma}
\newtheorem{corollary}[theorem]{Corollary}
\newtheorem{assumption}[theorem]{Genericity assumption}
\newtheorem*{proposition*}{Proposition}
\newtheorem*{theorem*}{Theorem}
\newtheorem*{lemma*}{Lemma}
\newtheorem*{claim*}{Claim}
\newtheorem*{conjecture*}{Conjecture}
\theoremstyle{definition}
\newtheorem{definition}[theorem]{Definition}
\theoremstyle{remark}
\newtheorem{example}[theorem]{Example}
\newtheorem{remark}[theorem]{Remark}
\newtheorem*{example*}{Example}
\newcommand{\Trop}{\text{Trop}}
\newcommand \RR {\mathbb{R}}  
\newcommand \CC {\mathbb{C}}  
\newcommand{\PSR}{\CC\{\!\{t\}\!\}}
\newcommand \ddiv {\text{div}}
\DeclareMathOperator {\val}{val}
\DeclareMathOperator {\ini}{\mathcal{I}}
\newcommand {\dunion}{\,\mbox {\raisebox{0.25ex}{$\cdot$} \kern-1.83ex $\cup$}
  \,}
\title{Lifting tropical bitangents}
\author{{\larger Y}{\smaller oav} {\larger L}{\smaller en and}  {\larger H}{\smaller annah} {\larger M}{\smaller arkwig}}
\address{Yoav Len, Georgia Institute of Technology, Department of Mathematics}
\email{yoav.len@math.gatech.edu}
\address{Hannah Markwig, Eberhard Karls Universit\"at T\"ubingen, Fachbereich Mathematik}
\email{hannah@math.uni-tuebingen.de}
\keywords{Tropical geometry, bitangents of quartics}
\subjclass[2010]{14T05}
\begin{document}
\maketitle
\begin{abstract}
We study lifts of tropical bitangents to the tropicalization of a given complex algebraic curve together with their lifting multiplicities.
Using this characterization, we show that generically all the seven bitangents of a smooth tropical plane quartic lift in sets of four to algebraic bitangents. We do this constructively, i.e.\ we give solutions for the initial terms of the coefficients of the bitangent lines. This is a step towards a tropical proof that a general smooth quartic admits $28$ bitangent lines. The methods are also appropriate to count real bitangents; however the conditions to determine whether a tropical bitangent has real lifts are not purely combinatorial.
\end{abstract}

\section{Introduction}
%Tropicalization is a powerful degeneration technique that allows to study problems in algebraic geometry using tools from discrete mathematics.
%\yoav{Cases we don't know yet:
%\begin{enumerate}
%\item Star shaped where all the edges are bounded and of equal length.
%\item Star shaped where only one edge is unbounded.
%\item Vertex on vertex where not all the adjacent edges overlap (either bounded or unbounded).
%\item Any other case?
%\end{enumerate}
%}
%
%\hannah{
%(2) should be fine - I did write this down a bit vaguely in Remark 3.11, but it really uses the same methods as above and would be too painful to write in detail...}
%
%\hannah{
%(3) since we assume smoothness, we cannot have 2 overlapping edges - then by balancing, the third would have to overlap also. So, when you say not all edges overlap, you must mean: one edge overlaps. We did deal with this in principle (remark 3.6), though I agree we have to go through it again after our discussion in Toronto --- it seems that the remark only works in situations where the other coefficient of the line is fixed by someone else, whereas if such a picture comes with another tangency point living on the same edge, the situation is different. The latter should better not happen generically...
%}

This paper is concerned with combinatorial aspects of bitangents to plane curves. Our main result is a classification of algebraic bitangent lines tropicalizing to a given tropical line. Such lines are called \emph{lifts}.  
%study of tropical lines that lift to bitangents of a given algebraic curve over the (complex) Puiseux series with a generic smooth tropicalization. 
The solutions are concrete in the sense that we find the initial terms of the bitangents explicitly. 
%we give concrete solutions for the initial terms of the coefficients of the equations defining the bitangent lines, i.e.\ we construct the lifts of the bitangents explicitly (in the first orders). 
As a result, our methods are easily adaptable to real Puiseux series; however, as far as we can tell the problem is not purely combinatorial in the real case. 

Our motivation stems mostly from the recent study of smooth tropical plane quartics and their relation with complex algebraic quartics. 
In 1834, Pl\"ucker showed that every smooth complex plane quartic admits $28$ bitangent lines \cite{Plucker}. Tropical quartics exhibit a similar behavior to their complex algebraic counterparts, but  admit 7 rather than 28 tropical bitangent lines \cite{BLMPR}. More precisely, tropical lines intersecting a tropical quartic twice with multiplicity two can be grouped into natural equivalence classes of which there are exactly 7.

The existence of 7 tropical bitangent classes raises the question whether each class has representatives which are the tropicalization of a complex algebraic bitangent, and if so, how many, and how many lifts each such representative has. It is natural to conjecture that every class of tropical bitangents provides exactly 4 lifts. Chan and Jiradilok  \cite{Chan2} studied $K4$ tropical quartics (i.e.\ tropical quartics whose underlying graph is a complete graph on 4 vertices after contracting ends and leaves), and provided a positive answer in that case. Our main result provides a positive answer for any generic tropically smooth quartic.

\begin{theorem*}[See Theorem \ref{thm-quartics}]
\label{thm:main}
Let $C$ be a smooth plane algebraic quartic defined over the complex Puiseux series, whose tropicalization is a generic smooth tropical quartic $\Gamma$. Then every equivalence class of tropical bitangents to $\Gamma$ lifts to $4$ different bitangents of $C$.
\end{theorem*}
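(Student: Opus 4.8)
The plan is to turn the global statement into a finite, class-by-class computation driven by the lifting criterion of the paper's classification, carried out explicitly on initial terms. First I would fix a generic smooth tropical quartic $\Gamma = \Trop(C)$, dual to a unimodular triangulation of the size-four lattice triangle, and recall its seven tropical bitangent classes together with the combinatorial data of each: the pair of edges or vertices of $\Gamma$ that carry the two tangency points of stable intersection multiplicity two. By genericity of $\Gamma$ only finitely many local shapes occur, and the two tangency loci sit in sufficiently general position, so each class reduces to a short, explicit list of local models with no unexpected coincidences.

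Next I would set up the lifting equations. Writing a candidate bitangent as $\ell : \alpha x + \beta y + \gamma = 0$ with $\alpha, \beta, \gamma \in \PSR$ whose valuations are those prescribed by a representative $L$ of the class, bitangency says that the restriction of the defining quartic $q$ to $\ell$ is a perfect square; parametrizing $\ell$ and substituting into $q$ turns this into the requirement that a degree-four polynomial in one variable equal the square of a quadratic. Passing to leading terms, the valuations of the coefficients are forced by $L$ and by the two sought double roots, so the perfect-square condition becomes an explicit polynomial system in the initial coefficients $\overline{\alpha}, \overline{\beta}, \overline{\gamma}$. This is exactly where the classification result applies: it tells me which representatives of the class can carry a solution and how the system decouples over the two tangency loci.

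I would then solve this initial system explicitly for each of the seven classes. Each tangency locus should contribute a binomial-type equation whose solutions can be written down, the two loci interacting only through the constraint that both double points lie on the single line $\ell$; summing the lifting multiplicities over the representatives in a class should yield exactly $4$ in every case, matching the Plücker count $7 \times 4 = 28$. Finally, Hensel's lemma over $\PSR$ (equivalently the implicit function theorem) promotes each nondegenerate solution of the initial system to an honest bitangent of $C$, and genericity guarantees that the solutions are nondegenerate and distinct, so the explicit initial count is the true count of lifts.

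The hard part will be the uniform appearance of the number $4$ across the seven genuinely different local geometries. In the more degenerate shapes --- a tangency point sitting at a vertex of $\Gamma$, or the two tangency loci interacting --- the naive number of initial solutions changes, and it must be corrected by the lifting multiplicities so that the total is always $4$. Verifying that these corrections conspire correctly, and calibrating the genericity hypothesis on $\Gamma$ so that it excludes every configuration in which the count could drop while remaining satisfied by a dense family of quartics, is the delicate step on which the whole argument rests.
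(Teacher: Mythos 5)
Your overall architecture --- reduce to a finite list of local models, solve for initial terms, then apply Hensel's lemma --- matches the paper's strategy in outline, but it has a genuine gap at exactly the point you flag as ``the delicate step,'' and the missing idea is tropical modification (re-embedding). Whenever a representative $\Lambda$ meets $\Gamma$ non-properly --- along a segment, or in a star-shaped component containing a vertex of both --- the initial-term system you write down is degenerate: the leading terms only force the residue $\overline{m}$ of one coefficient of the line to equal a fixed ``initial lift'' (a ratio of residues of two coefficients of $q$), the reduced Jacobian is singular, and no genericity hypothesis on $\Gamma$ restores nondegeneracy, so Hensel's lemma does not apply. The two actual local solutions live in higher-order terms of the Puiseux series. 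The paper extracts them by re-embedding $C$ via $z=\ell$ for a carefully chosen lift $\ell$ of $\Lambda$, tuning the coefficients of $\ell$ order by order to force cancellations in the projected Newton subdivision until the tangency becomes a transverse vertex intersection to which the implicit function theorem does apply (Propositions \ref{prop-horizontalsegment} and \ref{prop-starshaped}, via Lemma \ref{nonRegularIntersection}). Without this, or an equivalent higher-order analysis, your ``corrections by lifting multiplicities'' cannot be computed; and these non-proper cases are unavoidable --- for instance, the class in Figure \ref{fig-bigquartic} whose only liftable representatives are the segment-type bitangents labelled $2a$ and $2b$.

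The second gap is the enumeration of representatives within a class. An equivalence class is typically an infinite family of tropical bitangents, and the theorem requires identifying exactly which members lift and with what multiplicity. The paper's proof of Theorem \ref{thm-quartics} is largely a case analysis showing that each class decomposes into bounded one-parameter subfamilies in which only the boundary representatives have nonzero local multiplicity, with the local multiplicities at each of the two tangencies summing to $2$ (hence $2\times 2=4$ globally); it also needs the genericity assumption that distinct initial lifts on a common end of $\Lambda$ disagree, in order to rule out lifts from the interior of such a family. Your proposal compresses all of this into ``a short, explicit list of local models with no unexpected coincidences,'' which is where most of the actual work lies. Your perfect-square formulation of bitangency is a legitimate alternative to the paper's Wronskian setup and would decouple over the two tangency loci just as well, but it does not by itself address either issue above.
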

Interestingly, there are equivalence classes such that one representative lifts four times, or such that two representatives each lift twice, or such that one lifts twice and two once, or such that four representatives each lift once, but there is no class with one representative that lifts three times and one that lifts once.

Our methods of proof include combinatorial classifications, elementary solving of systems of equations for initials, tropical modifications and re-embeddings (also referred to as tropical refinement) and a variant of the Patchworking method. While no individual ingredient in this list of methods is new, some tools had to be sharpened for our purpose, and different tools had to be coordinated and matched to allow our intended combination.
Parts of the proof are technical and involve a case-by-case analysis. The fact that the methods nevertheless suffice to compute lifts for all tropical bitangents can be viewed as a proof-of-concept in tropical computation. Computing lifts of tropical objects in the field of Puiseux series is, as tropical computation in general, symbolic computation. As many of the tropical papers in the Journal of Symbolic Computations do, we provide an algorithm in tropical computation, it outputs lifts of tropical bitangents.

As a consequence of Theorem  \ref{thm-quartics}, we partially recover Pl\"ucker's result.
%It follows that a tropically smooth plane quartic has 28 bitangent lines. 
\begin{corollary}
A generic tropically smooth plane algebraic quartic has 28 bitangent lines.
\end{corollary}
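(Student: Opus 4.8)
The plan is to turn Theorem~\ref{thm-quartics} into an enumerative count by pairing it with the classification of tropical bitangents from \cite{BLMPR}. That reference establishes that the smooth tropical quartic $\Gamma = \Trop(C)$ carries exactly $7$ equivalence classes of tropical bitangents. Since $\Gamma$ is assumed generic, Theorem~\ref{thm-quartics} applies and tells us that each of these $7$ classes lifts to exactly $4$ distinct bitangents of $C$. The first step is to record the naive product $7 \times 4 = 28$ as the candidate total; the actual work is to verify that this product counts the bitangents of $C$ with neither repetition nor omission.

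For the lower bound I would check that the $4$ lifts attached to one class are disjoint from the lifts attached to any other class. Each algebraic bitangent $L$ of $C$ has a well-defined tropicalization $\Trop(L)$, which is a tropical bitangent of $\Gamma$ and hence lies in exactly one of the $7$ classes; sending $L$ to the class of $\Trop(L)$ is therefore a well-defined map from the bitangents of $C$ to the set of $7$ classes. Two lifts coming from different classes have tropicalizations landing in different classes, and are in particular different lines, so the $7$ groups of $4$ produced by Theorem~\ref{thm-quartics} are pairwise disjoint. This yields at least $28$ distinct bitangents.

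For the upper bound I would use that this assignment is surjective onto the $7$ classes with fibres of size at most $4$: surjectivity holds because each class, being liftable by the theorem, is the image of at least one bitangent, while the bound of $4$ on each fibre is exactly the content of Theorem~\ref{thm-quartics}, which produces precisely $4$ lifts per class and no more. Hence $C$ has at most $28$ bitangents, and combining the two estimates gives exactly $28$. The one point requiring genuine care, and the step I expect to be the main obstacle, is the claim underlying the well-definedness and surjectivity above, namely that every algebraic bitangent of $C$ tropicalizes to an honest tropical bitangent of $\Gamma$ falling into one of the seven classes. This is where the smoothness and genericity hypotheses must be invoked, to rule out bitangents whose tropicalization degenerates or fails to meet $\Gamma$ with the expected tangency, and it is precisely this dependence on genericity that limits the statement to a \emph{partial} recovery of Pl\"ucker's theorem rather than the case of an arbitrary smooth quartic.
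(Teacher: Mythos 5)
Your argument is correct and is exactly the (implicit) argument of the paper, which derives the corollary from Theorem \ref{thm-quartics} together with the fact that there are $7$ equivalence classes of tropical bitangents \cite{BLMPR} and that every algebraic bitangent tropicalizes to a tropical bitangent \cite{Mor15}. The one point you flag as the potential obstacle — that every algebraic bitangent lands in one of the seven classes — is precisely what the paper imports from \cite{Mor15} in Section \ref{sec-prelim}, so no new work is needed there.
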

\noindent We believe that the techniques of this paper can be generalized to tropicalizations of quartics that do not have a plane tropically smooth model. This would yield  a tropical proof of Pl\"ucker's theorem.

\vspace{0.2cm}

Our methods are not restricted to the case of plane quartics, and extend to curves of degree $d\geq 4$.
%For that reason, we can also formulate results about the lifting of tropical bitangents to plane curves of degree $d$ for $d\geq 4$. 
The formulas describing the lifting multiplicities of bitangents to arbitrary plane curves can be found in Theorem \ref{Thm:Lifting}, Proposition \ref{prop-starshaped} and Lemma \ref{lem-noliftforpointsinsegment}.

\begin{theorem*}[See Theorem \ref{Thm:Lifting}]
Let $C$ be a generic smooth algebraic plane curve defined over the complex Puiseux series tropicalizing to $\Gamma$, and let $\Lambda$ be a tropical line  that is bitangent to $\Gamma$ at two points. Then there is an explicit formula, in terms of the local combinatorics at the tangency points, for the number of lifts of $\Lambda$ to bitangent lines to $C$.
\end{theorem*}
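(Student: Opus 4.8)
The plan is to reduce the lifting problem to a local analysis at each of the two tangency points, and then to assemble the local data into a closed-form count. The key observation is that tropicalization is a valuation-theoretic operation, so a bitangent line $\Lambda$ to $\Gamma$ arises as the tropicalization of an algebraic bitangent to $C$ if and only if we can solve for the initial terms of the three coefficients $a,b,c$ of the line $\{ax + by + c = 0\}$ compatibly at \emph{both} tangency points simultaneously. First I would set up, at each tangency point $p$ of $\Lambda$ with $\Gamma$, the condition that the line meet $C$ with multiplicity two there: algebraically this says the restriction of the defining polynomial of $C$ to the line has a double root whose valuation is prescribed by the combinatorics of $\Gamma$ near $p$. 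The requirement that $\Lambda$ be tropically bitangent fixes the valuations $\val(a),\val(b),\val(c)$, so the remaining freedom lives entirely in the leading (initial) coefficients, and the double-root condition becomes a polynomial equation in those initial coefficients over the residue field $\CC$.

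Next I would carry out the local count at a single tangency point. The local combinatorics — which cells of $\Gamma$ the point $p$ lies in, the slopes of the adjacent edges, and the weights — determine the initial form of the curve $C$ near $p$, and hence a specific system of equations in the initial coefficients of the line. I expect this to split into a small number of combinatorial types (for a smooth tropical curve the tangency point is either in the interior of an edge or at a vertex, and the local picture is governed by the dual Newton subdivision), and for each type I would solve the corresponding tangency equation, recording the number of solutions together with their multiplicities. The tangency condition is essentially a discriminant vanishing, so the local solution count is the number of ways the initial polynomial can acquire a double root, weighted appropriately; this is where Proposition~\ref{prop-starshaped} and Lemma~\ref{lem-noliftforpointsinsegment} (which rule out lifts for certain segment positions) feed in.

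Having obtained the local solution sets $S_{p_1}$ and $S_{p_2}$ at the two tangency points, I would glue them: a global lift corresponds to a choice of initial coefficients that lies in $S_{p_1}$ and in $S_{p_2}$ at once. Because the two tangency conditions constrain overlapping subsets of the coefficients $(a,b,c)$ — after normalizing one coefficient, only a bounded number of the initial data are still free — the number of global lifts is the number of common solutions, which one computes by eliminating the free initial coefficients between the two local systems. The explicit formula in terms of the local combinatorics is then the product or matching count of the local data, corrected by the genericity of $C$ which guarantees transversality and hence that each combinatorially-valid initial solution genuinely extends (via a Hensel/Newton-type argument, or equivalently a Patchworking-style lifting) to an honest bitangent over the Puiseux series.

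The main obstacle I anticipate is twofold. First, ensuring that each solution of the initial system actually lifts — i.e.\ that the formal solution in $\CC$ for the leading coefficients extends to a convergent Puiseux-series bitangent — requires a non-degeneracy (smoothness/transversality) argument that must be uniform across all the combinatorial types; this is precisely where the genericity hypothesis on $C$ and a sharpened Patchworking/implicit-function argument are indispensable, and it is easy to have a combinatorial solution that fails to lift (hence the need for Lemma~\ref{lem-noliftforpointsinsegment}). Second, the bookkeeping of multiplicities when the two local conditions interact non-transversally — for instance when the double root forced at $p_1$ is compatible with $p_2$ only along a positive-dimensional family of initials — is delicate, and the final formula must correctly assign lifting multiplicities rather than merely count components. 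I would handle this by phrasing the count as an intersection number of the two local tangency loci inside the space of initial coefficients, so that the multiplicities are automatically the local intersection multiplicities, and then verify that genericity forces these intersections to be the expected dimension.
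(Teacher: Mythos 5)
Your overall strategy is the one the paper follows: set up, at each tangency point, the curve equation, the line equation, and a Wronskian-type tangency condition; pass to initial terms over the residue field; solve; and promote each initial solution to a genuine Puiseux-series solution by the non-Archimedean implicit function theorem (multivariate Hensel, Theorem \ref{Thm:IFT}), with genericity guaranteeing compatibility of the two local systems. Your gluing step is also right in spirit, though the paper's version is simpler than a general elimination: when the two tangency points lie on different ends of $\Lambda$, each local system determines a \emph{different} coefficient of the line ($m$, $n$, or $m/n$ after a coordinate change), so the systems decouple and the global count is just the product $m(C,\Lambda,P_1)\cdot m(C,\Lambda,P_2)$; when they lie on the interior of the same end they constrain the \emph{same} coefficient, and the genericity assumption (distinct initial lifts) forces the count to be zero.

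The genuine gap is in your local analysis when the tropical intersection at a tangency point is non-proper, i.e.\ when $\Lambda$ overlaps $\Gamma$ along a segment and the tangency point is its midpoint (case (3) of the paper). There the ``polynomial equation in the initial coefficients over the residue field'' that your plan produces is degenerate: the initial system only forces $\overline{m}=\overline{a}/\overline{b}$ (the initial lift), its Jacobian is singular, Hensel's lemma does not apply, and the local multiplicity $2$ is invisible at the level of initials --- it lives in the \emph{next} orders of the Puiseux expansion. The paper's remedy is the tool your proposal omits entirely: tropical modification and linear re-embedding. One re-embeds $C$ along a carefully chosen lift $z=y+a_0+\cdots$ of $\Lambda$, picking successive coefficients to cancel successive terms until the projected Newton subdivision exhibits a proper vertex-type intersection as in Figure \ref{NewtonFinal}; only then does Lemma \ref{nonRegularIntersection} produce two nondegenerate initial solutions to which the implicit function theorem applies. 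The same mechanism is needed again in the star-shaped case of Proposition \ref{prop-starshaped}. A second, smaller omission: at a vertex-vertex tangency (case (5)) the local count is neither $1$ nor $2$ but a tropical intersection number of $\Gamma$ with the tropicalization of the Wronskian, which requires the case-by-case stable-intersection and overlap analysis of Proposition \ref{prop-vertexofboth} rather than a discriminant count; your ``intersection number of the two local tangency loci'' framing gestures at this but would need exactly that computation to become a formula.
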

%\noindent \yoav{That will need to be rephrased, depending on what we manage with the vertex on vertex case}.

\vspace{0.2cm}

Modern proofs for the existence of the 28 bitangents  use  the correspondence between bitangents and odd theta characteristics. A result due to Zharkov \cite{Zh} shows that an abstract tropical curve of genus $g$ admits $2^{g}-1$ effective theta characteristics. 
When the tropical curve arises as the skeleton of an algebraic curve, each of these can be lifted to $2^{g-1}$ odd theta characteristics \cite{JL} (resp.\ \cite{Panizzut} for hyperelliptic curves). Since equivalence classes of bitangents to tropical quartics are in bijection with the effective theta characteristics,  it follows that each of them lifts to four algebraic bitangents. 
However, the study of theta characteristics leaves to discover which representatives within the equivalence class lift how many times, and what are concrete solutions for the initials of the coefficients of the defining equations. Moreover, it does not provide insight about tangent lines to curves of higher degree. Our paper provides answers concerning these problems.

Our techniques can be straight-forwardly adapted to real Puiseux bitangents to generic smooth algebraic plane curves defined over the real Puiseux series. In the real case, the number of bitangents depends on the topology of the curve. There can be $4$, $8$, $16$, or $28$ bitangents. The conditions governing the number of real lifts are not purely combinatorial however: they involve signs of coefficients of the defining equation after coordinate changes. 
While our methods lay the ground for a tropical approach to count real bitangents, we believe that a major combinatorial classification task (in which one also needs to control the coefficients after suitable coordinate changes) has to be completed before tropical geometry becomes an efficient tool to count real bitangents. We leave this combinatorial classification task for further research.

The paper is organized as follows. In Section \ref{sec-prelim}, we introduce all necessary preliminaries, including tropicalizations of curves and their intersections, Wronskians and Patchworking and tropical modification and re-embeddings. In Section \ref{sec-lifting}, we study lifts of tropical bitangents to curves of any degree. In Section \ref{sec-quartics}, we apply these methods to provide a full classification of lifts of tropical bitangents for generic tropicalizations of quartics. In Section \ref{sec-realbitangents}, we compute all real lifts of a certain tropicalized quartic defined over the reals. With that comprehensive example, we illustrate the applicability of our lifting methods to the real case. 

\subsection{Acknowledgements}
The authors gratefully acknowledge partial support by DFG-grant MA 4797/6-1.
We acknowledge the hospitality of the Fields Institute, where much of this work has been carried out. 
%Probably a great thanks to Shustin since he suggested the method that we ended up using. 
We would like to thank Nathan Ilten, Thomas Markwig, Eugenii Shustin and Ilya Tyomkin for valuable discussions and suggestions. In addition, we thank Heejong Lee, Run Tan,  Akshay  Tiwary and Wanlong Zheng for helpful discussion.
We are thankful to the anonymous referees whose comments  helped to improve  previous versions of this paper.
Computations have been made using \textsc{Singular} \cite{DGPS}, mainly the \textsc{Singular}-library tropical.lib \cite{JMM07a} and the Sage-package Viro.sage  \cite{Viro:Sage} resp.\ the online tool \begin{displaymath}\mbox{https://www.math.uni-tuebingen.de/user/jora/patchworking/patchworking.html}.\end{displaymath}  for combinatorial patchworking developped by Boulos El-Hilany, Johannes Rau and Arthur Renaudineau.

\section{Preliminaries}\label{sec-prelim}
We assume that the reader is familiar with tropical plane curves (i.e.\ balanced weighted rational planar graphs) arising as bend loci of tropical bivariate polynomials and their dual Newton subdivisions. For an introduction to this topic, see e.g.\ \cite{FirstSteps, Gathmann}. 
Figure \ref{fig-bigquartic} shows an example of a tropical plane curve defined by a quartic tropical polynomial, and its dual Newton subdivision.

%We briefly review basic definitions of tropical geometry that will be used in this paper. See [some relevant survey paper] for a more thorough treatment.
%
%\begin{definition}
%A plane tropical curve is a planar graph $\Gamma$, together with an integer weight function on the edges, such that
%\begin{enumerate}
%\item The slope of each edge is rational.
%\item At each vertex, the weighted sum of the primitive integral
%vectors of the edges around the vertex is zero.
%\end{enumerate}
%\end{definition} 

\subsection{Intersection multiplicity} Suppose that two tropical plane curves $\Gamma_1,\Gamma_2$ intersect properly (i.e.\ in finitely many points which are not vertices), and $P\in \Gamma_1\cap \Gamma_2$. Choose weighted direction vectors $u_1,u_2$ for edges of $\Gamma_1,\Gamma_2$ emanating from $P$. Then the \emph{intersection multiplicity} of the two curves at $P$ is $\Gamma_1\cdot\Gamma_2\mid_P = |\det(u_1,u_2)|$, and $\Gamma_1\cdot \Gamma_2=\sum_{P\in \Gamma_1\cap \Gamma_2}\Gamma_1\cdot\Gamma_2\mid_P$. The balancing condition ensures that this definition does not depend on the choice of direction vectors. If the curves do not intersect properly, we  use the \emph{stable} intersection: choose a direction $v$ so that $\Gamma_1$ and $\Gamma_2 + \epsilon\cdot v$ intersect properly whenever $\epsilon$ is small enough. Then the stable intersection is
$$
\lim_{\epsilon\to 0} \Gamma_1\cdot(\Gamma_2+\epsilon\cdot v).
$$

A plane tropical curve is \emph{smooth} of degree $d$ if it is dual to a unimodular triangulation of the triangle with vertices $(0,0), (0,d), (d,0)$. Here, unimodular means that it is subdivided to $d^2$ triangles each having area $\frac{1}{2}$. 
%
%We say that a tropical line $\Lambda$ is \emph{bitangent} to a tropical curve $\Gamma$ if 
%\begin{itemize}
%\item $\Gamma \cap \Lambda$ contains two connected components $S_1$ and $S_2$ such that the stable intersection restricted to $S_i$ has multiplicity at least $2$ for $i=1,2$, or
%\item $\Gamma \cap \Lambda$ contains a connected component $S$ such that the stable intersection restricted to $S$ has multiplicity at least $4$.
%\end{itemize}

We assume that the reader is familiar with the theory of tropical divisors on curves. For a lucid introduction to the theory of tropical divisors see \cite{BJ}.

\begin{definition}
Let $D$ be the stable intersection of a tropical line $\Lambda$ and a tropical plane curve $\Gamma$. We say that $\Lambda$ is \emph{bitangent} to $\Gamma$ at $P$ and $Q$ (or that $(\Lambda,P,Q)$ is a bitangent) if there exists a piecewise linear function $f$ that is constant outside of the intersection, and such that $D+\ddiv(f)$ is effective and contains $2P+2Q$. The points $P$ and $Q$ are called tangency points.
\end{definition}
\noindent For instance, when a component of $\Lambda\cap\Gamma$ is an edge, there is a tangency at the midpoint of the edge. By abuse of notation, when the points P and Q are known from context, we may omit them, and refer to $\Lambda$ as a bitangent.

It will often be the case that a tropical curve admits infinitely many bitangents. We therefore declare that two bitangents are equivalent if we can continuously move one line to the other while maintaining bitangency. For quartics, this is equivalent to saying that the bitangents correspond to the same theta characteristic in the tropical Jacobian \cite[Definition 3.8]{BLMPR}. For example, in Figure \ref{fig-bigquartic}, if we draw a tropical line with a vertex at the point $2a$ and then move the vertex continuously until it reaches the point $2b$, we obtain a family of equivalent tropical bitangents. We will see later that the liftable members of this family are precisely the two lines with vertices $2a$ and $2b$, and both have two distinct lifts. 
%We therefore declare that  bitangents $(\Lambda_1,P_1,Q_1)$ and $(\Lambda_2,P_2,Q_2)$ are \emph{equivalent} if   $P_1+Q_1$ and $P_2+Q_2$ are linearly equivalent divisors. That is, there is a piecewise linear function on $\Gamma$ whose bend locus is exactly $(P_1+Q_1) - (P_2+Q_2)$. In the case of quartic curves, this is equivalent to saying that the bitangents correspond to the same theta characteristic in the tropical Jacobian \cite[Definition 3.8]{BLMPR}. Informally, two bitangents are equivalent if we can continuously move one line to the other while maintaining bitangency. For example, in Figure \ref{fig-bigquartic}, if we draw a tropical line with vertex at the point $2a$ and then move the vertex continuously until it reaches the point $2b$, we obtain a family of equivalent tropical bitangents. We will see later that the liftable members of this family are precisely the two lines with vertices $2a$ and $2b$, and both have two distinct lifts. 

From \cite{Mor15}, the tropicalization of every algebraic bitangent line is a tropical bitangent line. However, as we will see, there are many tropical bitangents that are not liftable to algebraic bitangents (this should not come as a surprise, considering that there may be infinitely many tropical bitangents!). However, every equivalence class of bitangents has at least one liftable representative.

\subsection{Tropicalization of plane curves and their intersections}\label{subsec-intersect}

By the famous theorem of Kapranov, tropical plane curves arise as tropicalizations of curves defined over a field with a non-Archimedean valuation \cite{EKL06}. Due to our computational approach and for the sake of explicitness, we let $K=\CC\{\{t\}\}$ be the field of Puiseux series, with %maximal ideal $\mathfrak{m}$, and
 valuation $\val$ sending a Puiseux series to its least exponent, and $0$ to $\infty$. For a Puiseux series $a$ of valuation $0$, we denote by $\overline{a}$ its residue in $\CC$. Explicitly, if 
\[
a = a_0 + a_1^{\frac{1}{n}} + a_2^{\frac{2}{n}} + \ldots,
\]
then $\overline{a} = a_0$.

For a point $(x,y)$ in the two-dimensional torus $(K^\ast)^2$, we call $(-\val(x),-\val(y))\in \mathbb{R}^2$ its \emph{tropicalization}.
Vice versa for a point $(X,Y)\in \mathbb{R}^2$, we call any point $(x,y)\in~(K^\ast)^2$ satisfying $-\val(x)=X$, $-\val(y)=Y$ a \emph{lift} of $(X,Y)$.
Let $C$ be a plane curve defined by the polynomial $q$ over $K$. The \emph{tropicalization} of $C$, denoted $\Trop(C)$, is the closure with respect to the Euclidean topology of the set of tropicalizations of the torus points of $C$. Equivalently, it is the set of points $(X,Y)\in\mathbb{R}^2$ in which the maximum value of $-\val(c) + aX + bY$ is obtained  at least twice for monomials $cx^ay^b$ of $q$. From the Newton polygon and the regular subdivision induced by the negative valuations of the coefficients of $q$, the tropicalization obtains the structure of a plane tropical curve, with weights given by the lattice lengths of the dual edges in the Newton subdivision. Any curve $C$ tropicalizing to a given plane tropical curve $\Gamma$ is called a \emph{lift} of $\Gamma$.

For example, the tropical plane quartic depicted in Figure \ref{fig-bigquartic} has a lift defined by the polynomial $q\in \CC\{\{t\}\}[x,y]$:
\begin{align}\begin{split}q=&t^{11}\cdot x^4+3t^4\cdot x^3y+t^3\cdot x^2y^2+t^5\cdot xy^3+t^{15}\cdot y^4+t^8\cdot x^3+x^2y+xy^2+t\cdot y^3\\&+t^6\cdot x^2+xy+t^3\cdot y^2+t^5\cdot x+t^7\cdot y+t^{13}.\end{split}\label{eq-exq}\end{align}

Analogously, we can define tropicalizations $\Trop(V(I))$ of subvarieties $V(I)$ of the $n$-dimensional torus $(K^\ast)^n$ defined by an ideal $I\subset K[x_1,\ldots,x_n]$ by applying the negative valuation coordinatewise.

Let $C_1$ and $C_2$ be two plane curves over $K$ that do not share a component.
The tropicalization of the finite set $C_1\cap C_2$ is contained, but is not necessarily equal to the (set-theoretic) intersection of $\Trop(C_1)$ and $\Trop(C_2)$. In particular, $\Trop(C_1\cap C_2)\subsetneq\Trop(C_1)\cap \Trop(C_2)$ if the latter intersection is not proper. It follows from \cite{Mor15} that for a given non proper intersection of two tropical plane curves $\Gamma_1$ and $\Gamma_2$, we can find lifts $C_1$ and $C_2$ such that $\Trop(C_1\cap C_2)$ equals a given divisor $D$ of $\Gamma_1$ and $\Gamma_2$ only if $D$ is linearly equivalent to the stable intersection divisor, via a rational function that is constant outside the region of intersection.
From this, we can deduce where a point of tangency can be in a non-proper intersection of a tropical line $\Lambda$ with a tropical curve $\Gamma$ (see Figure \ref{fig-3cases}):
\begin{enumerate}
\item If $\Lambda$ meets $\Gamma$ in a line segment with stable intersection of multiplicity $2$, the tropicalization of a tangency point has to be the midpoint of the line segment.
\item If a connected component of $\Lambda\cap\Gamma$ consists of a vertex and $3$ edges in the standard directions (so it locally looks like a tropical line), the total intersection multiplicity equals $4$ and we can have two points of tangency. If the minimal lengths of the three adjacent edges is $l_1$, the two points of tangency appear on the other two edges, at distance $(l-l_1)/2$ from the vertex is the length is $l$ (see also Figure \ref{fig-starshaped}).  We refer to such an intersection as ``star-shaped''.
\item If $\Lambda$ meets $\Gamma$ in a line segment with stable intersection of multiplicity at least $k\geq 4$, the line segment must contain the vertex $P$ of $\Lambda$, and the intersection multiplicity at $P$ is $k-1$ (see also Figure \ref{fig-noliftforpointsinsegment}). 
%In that case, one of the tangency points must be in the middle of the segment and the other at $P$.
\end{enumerate}

\begin{figure}
\input{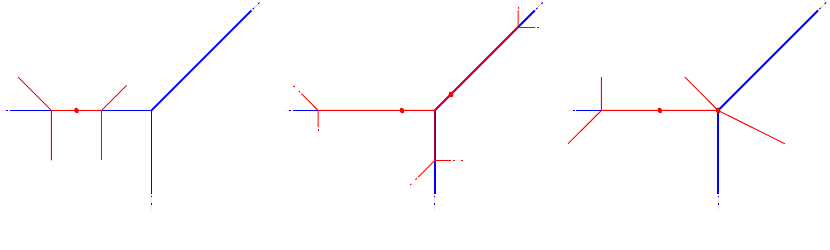_t}
\caption{Positions for tangency points in a non-proper intersection of a tropical line $\Lambda$ with a tropical curve $\Gamma$.}\label{fig-3cases}
\end{figure}

\subsection{How to lift bitangents lines?}\label{sec-wronskian}
To determine a bitangent line $V(\ell)$ lifting $\Lambda$, we first set up a system of equations in which the indeterminates are the coordinates of the point of tangency and the coefficients of the line. We solve for the initial terms of the Puiseux series coefficients of $\ell$. For some cases, there will be multiple solutions for the initial terms, which correspond to a higher multiplicity of a lift of the bitangent. 
We then complete the initial term uniquely to a Puiseux series solving our equations using the following theorem, commonly known at the non-Archimedean implicit function theorem or the multivariate Hensel's lemma \cite[Exercises 7.25, 7.26]{Eisenbud}. It can also be viewed as part of the more general Patchworking procedure as described in \cite[Chapter 2]{IMS09}.

\begin{theorem} \label{Thm:IFT}
Let $f_1,\ldots,f_n$ be elements in $K[x_1,\ldots,x_n]$, and let $f=(f_1, f_2\ldots,f_n)$. Suppose that the valuations of the coefficients of all $f_i$ are nonnegative, and that for each $i$ there is a coefficient of $f_i$ which has valuation zero.
Denote by $J_f$  the Jacobian matrix of $f$. Suppose that $\overline{f(a)} =0$ for some $a=(a_1,\ldots, a_n)$ in $\CC^n$, and that $\overline{\det(J_f(a))}$ is invertible in $\CC$.  Then there exists a unique $b=(b_1,\ldots,b_n)$ in $K^n$ satisfying
\[
f(b)=0,\;\val(b_i)=0\mbox{ and }\overline{b_i}= a_i.
\]
\end{theorem}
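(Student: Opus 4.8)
The plan is to prove the statement by Newton's method, i.e.\ the $t$-adic analogue of the Newton--Kantorovich iteration, which is the usual engine behind Hensel-type lemmas. First I would recast the hypotheses ring-theoretically. Let $R=\{x\in K:\val(x)\geq 0\}$ be the valuation ring of $K=\CC\{\{t\}\}$, with maximal ideal $\mathfrak{m}=\{x:\val(x)>0\}$ and residue field $\CC$. The assumption that all coefficients of the $f_i$ have non-negative valuation says exactly that $f\in R[x_1,\ldots,x_n]^n$, so reduction modulo $\mathfrak{m}$ yields a polynomial map $\overline{f}\colon\CC^n\to\CC^n$, and the condition $\overline{f(a)}=0$ reads $\overline{f}(a)=0$. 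Since $f$ has coefficients in $R$, the Jacobian $J_f(a)$ has entries in $R$ and reduces to $J_{\overline{f}}(a)$; the hypothesis that $\overline{\det(J_f(a))}$ is invertible means $\det(J_f(a))$ has valuation $0$, hence is a unit of $R$, so $J_f(a)^{-1}$ already has entries in $R$.

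With this in place I would set $b^{(0)}=a$ and iterate $b^{(k+1)}=b^{(k)}-J_f(b^{(k)})^{-1}f(b^{(k)})$. The key computation is the exact second-order Taylor expansion of a polynomial with $R$-coefficients: for $g\in R[x_1,\ldots,x_n]$ one has $g(x+h)=g(x)+\sum_j \frac{\partial g}{\partial x_j}(x)\,h_j+Q_g(x,h)$, where $Q_g$ collects the terms of degree $\geq 2$ in $h$ and again has coefficients in $R$. Evaluating at $x=b^{(k)}\in R^n$ and $h=\Delta^{(k)}:=b^{(k+1)}-b^{(k)}$, and writing $m_k:=\val(f(b^{(k)}))$ for the minimum valuation over the coordinates, I would prove by induction the three statements: $b^{(k)}\equiv a\pmod{\mathfrak m}$; the matrix $J_f(b^{(k)})$ is invertible over $R$ (immediate from the previous congruence, since its reduction equals the invertible $J_{\overline f}(a)$, which also makes the iteration well defined); and $m_{k+1}\geq 2m_k$. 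This last, quadratic estimate is the heart of the argument: since $J_f(b^{(k)})^{-1}$ has entries in $R$ we get $\val(\Delta^{(k)})\geq m_k$, and because the defining relation cancels the linear term, $f(b^{(k+1)})=Q_f(b^{(k)},\Delta^{(k)})$, whose valuation is at least $2\val(\Delta^{(k)})\geq 2m_k$. As $m_0=\val(f(a))>0$ by the hypothesis $\overline{f(a)}=0$, this forces $m_k\geq 2^k m_0\to\infty$.

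From $\val(\Delta^{(k)})\geq m_k\to\infty$ the sequence $(b^{(k)})$ is Cauchy. Here I would address the one genuine subtlety: $K$ itself is not complete, but the finitely many coefficients of $f$ together with the entries of $a$ all lie in some finite ramified extension $L=\CC((t^{1/N}))$, which \emph{is} complete, and every Newton step (polynomial evaluation and inversion of a matrix with unit determinant) stays inside $L$. Thus the limit $b=\lim_k b^{(k)}$ exists in $L^n\subset K^n$, satisfies $f(b)=0$ by continuity together with $m_k\to\infty$, and satisfies $\val(b_i)=0$ and $\overline{b_i}=a_i$ because $b\equiv a\pmod{\mathfrak m}$. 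For uniqueness, if $b'$ is a second solution then $h:=b-b'$ has $\val(h)>0$ and $0=f(b)-f(b')=J_f(b')h+Q_f(b',h)$; inverting $J_f(b')$ over $R$ gives $\val(h)\geq\val(Q_f(b',h))\geq 2\val(h)$, which is impossible unless $h=0$.

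I expect the main obstacle to be twofold: pinning down the quadratic-convergence estimate cleanly---in particular that the linear term is exactly cancelled and that the remainder $Q_f$ retains $R$-coefficients, so that its valuation is genuinely controlled by $2\val(\Delta^{(k)})$---and handling the incompleteness of the Puiseux field by descending to a complete ramified subfield. Neither point is deep, but both must be stated carefully for the iteration to converge inside $K$ and for the argument to be rigorous.
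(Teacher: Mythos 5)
Your proof is correct. The paper does not actually prove Theorem \ref{Thm:IFT} --- it is quoted as the multivariate Hensel's lemma with a reference to \cite[Exercises 7.25, 7.26]{Eisenbud} --- and your Newton--Kantorovich iteration is precisely the standard argument behind that cited result, with the two real subtleties (exact cancellation of the linear term leaving a remainder $Q_f$ with coefficients in the valuation ring, hence the quadratic estimate $m_{k+1}\geq 2m_k$; and the incompleteness of $\CC\{\!\{t\}\!\}$ handled by working inside a complete subfield $\CC((t^{1/N}))$ containing all the data) correctly identified and resolved.
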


Let $C$ be defined by a polynomial $q\in K[x,y]$.
We denote by $(\tilde x,\tilde y)$ a point of tangency, and by $\ell=y+m+nx$ the equation of a line with parameters $m,n\in K$.
The point $(\tilde x,\tilde y)$ has to be a common point of $C=V(q)$ and $V(\ell)$, so it satisfies 
\begin{equation}\label{eq-q}
q(\tilde x,\tilde y)=0
\end{equation}
and \begin{equation}\label{eq-l}
\ell(\tilde x,\tilde y)=\tilde y+m+n\tilde x=0.
\end{equation}
Furthermore, the line defined by $\ell$ should be tangent at $(\tilde x,\tilde y)$, so the tangent vector $(\partial q/\partial x, \partial q/ \partial y)$ at $(\tilde x,\tilde y)$ should be parallel to $(n,1)$. 
This can be rephrased by saying that  the \emph{Wronskian} 
\begin{equation}\label{eq-W}
W(\tilde x,\tilde y,m):=\det\begin{pmatrix}
\partial q/\partial x(\tilde x,\tilde y) & n\\
\partial q/ \partial y(\tilde x,\tilde y) & 1
\end{pmatrix}
\end{equation}
vanishes.

Applying the same argument for another tangency point yields $3$ more equations, giving in total  $6$ equations in $6$ variables  (the two coefficients of the line, and the four coordinates of the two tangency points). We solve these equations for initial terms, then use Theorem \ref{Thm:IFT} to lift the solution to Puiseux series by checking that the determinant of the Jacobian is invertible. The calculation is made simple due to two key observations. First, for each tangency point, the reductions of the equations to initial terms only depend on the terms corresponding to the vertices of the polygon dual to the tropical tangency point. Second, when the two tropical tangency points are in different ends of the line, then (possibly after a change of variables) the set of equations of each point only depend on the coordinates of the point, and either $m$, $n$ or $\frac{m}{n}$. So our system of equations breaks into two systems of $3$ equations in $3$ variables. These observations imply that our computation can be done entirely locally.

%When solving for initial terms, it is often the case that one of the coefficients $m$ or $n$ of $l$ is irrelevant because it only contributes to high order coefficients. In such situations, we actually deal with $3$ equations in $3$ unknowns $\tilde x,\tilde y$ and $m$ (resp.\ $n$). We can then set up a Jacobian matrix $A$ with lines indexed by the three equations and columns indexed by the three variables, such that $A_{ij}$ contains the derivative of equation $i$ with respect to the variable $j$. It follows from Theorem \ref{Thm:IFT} that if the determinant of the lowest orders of $A$ is nonzero for given values of the initial terms of $\tilde x$, $\tilde y$ and $m$, then there is a unique solution in $K$ starting with these initial terms.

\subsection{Tropical modifications and re-embeddings of plane curves}\label{subsec-modification}
Tropical modifications (sometimes also referred to as tropical refinements) appeared in \cite{Mi06} 
and have since been instrumental in various applications, see e.g.\ \cite{AR07, ABBR152, ABBR15, BLdM11, CM14, IMS09, MMS09, Sha10}.  
Here, we consider modifications of the plane
$\RR^2$ along tropical lines (the bitangent lines of the tropicalizations of plane curves).

Modification is a geometric technique for dealing with  the vexing fact that tropicalization depends on the embedding. Some geometric features can be seen in the tropicalization with respect to certain embeddings but not others. 
The quintessential example in our context is when two curves meet at a point, but their tropicalizations meet along a segment (resp.\ tropical segment). Using modifications we may re-embed the curves so that their tropicalizations meet at the expected number of points. 

Modifications and re-embeddings fit more broadly within the context of Berkovich theory and its relation to tropical geometry. 
Unlike tropicalization, the Berkovich analytification, which is the inverse limit of all tropicalizations \cite{Pay09}, does not depend on the chosen embedding. Philosophically this means that for a given ``bad'' tropicalization, we can add functions, embed our curve into bigger and bigger spaces, eventually coming ``close enough'' to the analytification to observe the features we care for. One of the main tools for getting closer to the analytification is via modification. 
The relation with Berkovich theory is not a necessary tool in our paper --- and we therefore refrain from referring to it in the text to make it accessible to readers without background in Berkovich theory.

\begin{remark}\label{rem-coordinatechange}
We will see below that we can describe the tropicalization of a re-embedded curve inside a modified tropical plane by means of coordinate projections. Passing from one coordinate chart to another amounts to applying a linear change of coordinates to the original plane curve. If we can solve a lifting problem for a tropical bitangent after a linear coordinate change, we can of course equivalently solve the original problem. In this way, we will make use of modifications in our problem of lifting tropical bitangent lines. 
\end{remark}

Let $L=\max\{M,N+ X, Y\}$ be a linear tropical polynomial with $M$ and $N$ in $\mathbb{T}_\mathbb{R}:=\mathbb{R}\cup\{-\infty\}$. The graph of $L$
considered as a function on $\RR^2$ consists of at most three linear
pieces. At each break line, we attach two-dimensional cells spanned in
addition by the vector $(0,0,-1)$ (see e.g.\ \cite[Construction 3.3]{AR07}).
 We assign multiplicity $1$ to each cell and obtain a
balanced fan in $\RR^3$. It is called the \emph{modification of
  $\RR^2$ along $L$}. If $\Gamma\subset \mathbb{R}^2$ is a plane tropical curve, we bend it analogously and attach downward ends to get the modification of $\Gamma$ along $L$, which now is a tropical curve in the modification of $\RR^2$ along $L$.

Let $\ell=m+nx+y\in \PSR[x,y]$ be a lift of $L$, i.e.\ $-\val(m)=M$,
and $-\val(n)=N$.  We fix an irreducible polynomial $q\in
\PSR[x,y]$ defining a curve in the torus $(\PSR^*)^2$. The
tropicalization of the variety defined by $I_{q,\ell}=\langle q, z-\ell\rangle\subset \PSR[x,y,z]$
is a tropical curve in the modification of $\RR^2$ along $L$. We call
it the \emph{linear re-embedding} of the tropical curve $\Trop(V(q))$
\emph{with respect to $\ell$}.

For almost all lifts $\ell$, the linear re-embedding equals the
modification of $\Trop(V(q))$ along $L$, i.e.\ we only bend $\Trop(V(q))$ so
that it fits on the graph of $L$ and attach downward ends. 
However, for some choices of lifts $\ell$, the part of $\Trop(V(I_{q,\ell}))$
in the cells of the modification attached to the graph of $L$ contains
more attractive features.  We are most interested in these special
linear re-embeddings. This phenomenon is illustrated in Examples \ref{ex-modi1} and \ref{ex-modi2}.
We distinguish two main cases:
\begin{itemize}
\item[(I)] The tropical line at which we modify the plane is degenerate in the sense that it does not have a vertex. Without restriction we assume it is the horizontal line $\{Y=0\}$.
\item[(II)] The tropical line at which we modify has a vertex. Without restriction we assume that the vertex is at the point $(0,0)$.
\end{itemize}

 In case (I), the modification
  is induced by the tropical polynomial $L=\max\{Y,0\}$.  Its lifts are of the form $\ell=y+m $
  where $m\in \PSR$ has valuation 0. The modified plane contains three
  maximal cells:
\[
\sigma_1=\{Y\leq 0, Z=0\}, \quad \sigma_2=\{Y\geq 0, Z=Y\}, \quad \mbox{ and } \quad
\sigma_3=\{Y=0, Z\leq 0\}. 
\]
By construction, $\sigma_3$ is the unique cell of the modification of $\RR^2$
attached to the graph of $L$. 

We describe $\Trop(V(I_{q,\ell}))$ by means of two projections:
\begin{enumerate}
\item the projection $\pi_{XY}$ to the coordinates $(X,Y)$ produces the
  original curve $\Trop(V(q))$.
\item the projection $\pi_{XZ}$ gives a new tropical plane curve
  $\Trop(V(\tilde{q}))$ inside the projections of the cells $\sigma_2$ and $\sigma_3$, where
  $\tilde{q}=q(x,z-m)$. The polynomial $\tilde{q}$ generates the
elimination ideal $I_{q,\ell}\cap \PSR[x,z]$.
\end{enumerate}

\begin{example}\label{ex-modi1}
Let $q=t+(1+t^2)\cdot x+xy+tx^2\in K[x,y]$. Let $\ell=y+1$. Figure \ref{fig-modi1} shows $\Trop(V(I_{q,\ell}))$ and the two projections $\Trop(V(q))$ and $\Trop(V(\tilde{q}))$, where $\tilde{q}=q(x,z-1)=x+t^2x+xz+tx^2$. The Newton subdivisions of $q$ and $\tilde{q}$ are depicted in Figure \ref{fig-modiNewt1}. 
The choice of lift $\ell$ was made in order to produce cancellation in the projection $\tilde{q}$. For generic lifts, i.e.\ for $l'=y+m$ with $m\in K$ of valuation $0$ but with residue not equal to $1$, we would obtain the modification of the plane curve, whose projection to the $X$ and $Z$-coordinates is depicted in the grey shaded area.

\end{example}

\begin{figure}
\input{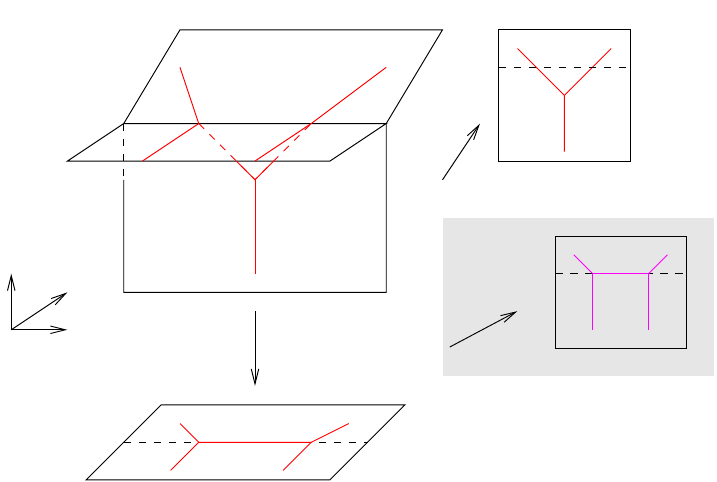_t}
\caption{A tropical curve on a modified plane as in case (I) together with two projections. In the grey shaded area, the projection of the modification of the curve, this is what we get for generic lifts of the tropical function we modify at. Newton subdivisions are depicted in Figure \ref{fig-modiNewt1}.}\label{fig-modi1}
\end{figure}

 In case (II), the modification
  is induced by the tropical polynomial $L=\max\{X,Y,0\}$.  Its liftings are of the form $\ell=y+nx+m $
  where $n,m\in \PSR$ have valuation 0. The modified plane is a standard tropical plane in $\RR^3$. It contains six maximal cells:
\begin{align*}
& \sigma_1=\{X\leq 0,Y\leq 0, Z=0\}, \quad \sigma_2=\{X\geq 0,X\geq Y, Z=X\}\quad\sigma_3=\{X\leq Y,Y\geq 0, Z=Y\}\\ & \sigma_4=\{X\leq 0,Y=0, Z\leq 0\}, \quad \sigma_5=\{Y\leq 0,X=0, Z\leq 0\} \mbox{ and } 
\sigma_6=\{X=Y\geq 0, Z\leq X\}. 
\end{align*}
Now $\sigma_4$, $\sigma_5$ and $\sigma_6$ are the cells of the modification of $\RR^2$ attached to the graph of $L$. 

We describe $\Trop(V(I_{q,\ell}))$ by means of three projections:
\begin{enumerate}
\item the projection $\pi_{XY}$ to the coordinates $(X,Y)$ produces the
  original curve $\Trop(V(q))$.
\item the projection $\pi_{ZY}$ gives a new tropical plane curve
  $\Trop(V(q_{yz}))$ inside the projections of the cells $\sigma_3$, $\sigma_4$ and $\sigma_6$, where
  $q_{yz}=q(\frac{1}{n} z-\frac{1}{n} y+\frac{m}{n},y)$. The polynomial $q_{yz}$ generates the
elimination ideal $I_{q,\ell}\cap \PSR[y,z]$.
\item the projection $\pi_{ZX}$ gives a new tropical plane curve
  $\Trop(V(q_{xz}))$ inside the projections of the cells $\sigma_2$, $\sigma_5$ and $\sigma_6$, where
  $q_{xz}=q(x,z-nx-m)$. The polynomial $q_{xz}$ generates the
elimination ideal $I_{q,\ell}\cap \PSR[x,z]$.
\end{enumerate}

\begin{example}\label{ex-modi2}
Let $q=t^3\cdot x^2y^2+x^2y+xy^2+t\cdot x^2+xy+t^3\cdot y^2\in K[x,y]$. Let $\ell=x+y+1$. Figure \ref{fig-modi2} shows $\Trop(V(I_{q,\ell}))$ and the three projections $\Trop(V(q))$, $\Trop(V(q_{xz}))$ and $\Trop(V(q_{yz}))$, where \begin{align*}q_{xz}=&q(x,z-x-1)=t^3\cdot x^4+(-2t^3)\cdot x^3z+t^3\cdot x^2z^2+(2t^3)\cdot x^3+(-2t^3-1)\cdot x^2z\\&+xz^2+(2t^3+t)\cdot x^2+(-2t^3-1)\cdot xz+t^3\cdot z^2+2t^3\cdot x+(-2t^3)\cdot z+t^3\end{align*} and \begin{align*}q_{yz}=q(z-y-1,y)=&t^3\cdot y^4+(-2t^3)\cdot y^3z+t^3\cdot y^2z^2+(2t^3)\cdot y^3+(-2t^3-1)\cdot y^2z+yz^2\\&+(2t^3+t)\cdot y^2+(-2t-1)\cdot yz+t\cdot z^2+2t\cdot y+(-2t)\cdot z+t .\end{align*} The Newton subdivisions of $q$, $q_{xz}$ and $q_{yz}$ are depicted in Figure \ref{fig-modiNewt2}.
\end{example}

\begin{figure}
\input{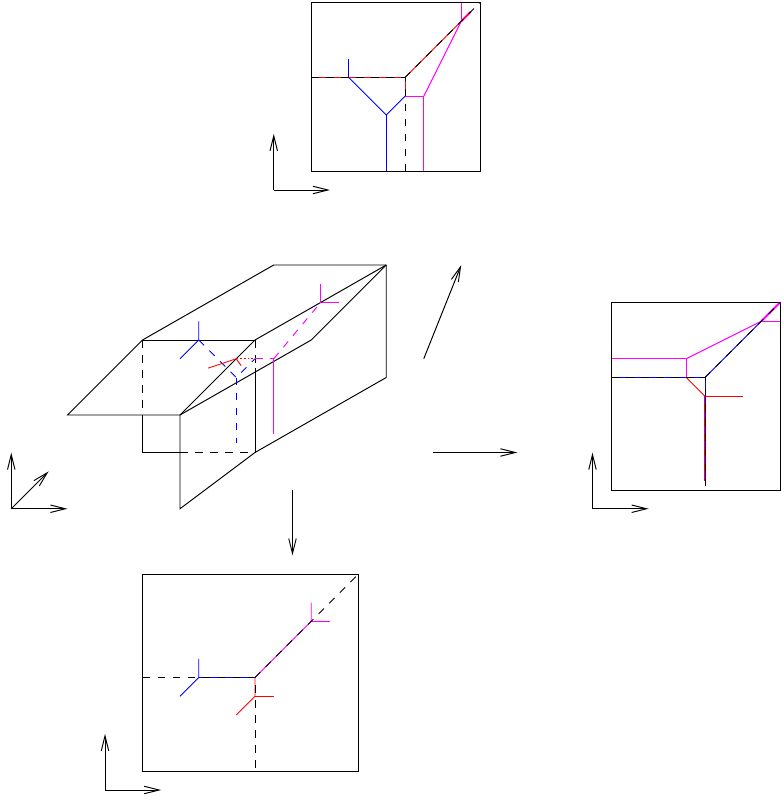_t}
\caption{A tropical curve on a modified plane as in case (II) together with the three projections. Newton subdivisions are depicted in Figure \ref{fig-modiNewt2}.}\label{fig-modi2}
\end{figure}

By Lemma 2.2 of \cite{CM14} (and a slight generalization to cover case (II)), the projections above define $\Trop(V(I_{q,\ell}))$. The content of the lemma is to recover the parts of $\Trop(V(I_{q,\ell}))$ which are not contained in the interior of cells $\sigma_i$ --- for the images of these lower codimension cells the preimage under the projection is not unique.
The projections are given by linear coordinate changes of the original curve. We can study the Newton subdivision of the projected curve in terms of these coordinate changes.

In case (I), a term $a\cdot x^i y^j$  of $q$ is replaced by $x^i(z-m)^j$, and so it contributes to all terms of the form $x^i z^k$ for $0\leq k\leq j$. This is called the ``feeding process'' and is visualized in Figure \ref{fig-modiNewt1}.
From the feeding, we can deduce expected valuations of the coefficients. The subdivision corresponding to the expected valuations is dual to the projection of the modified curve. We care for cases in which there is cancellation and the expected valuation is not taken. These are the special re-embeddings that will make hidden geometric properties of $\Trop(V(q))$ visible.

\begin{figure}
\input{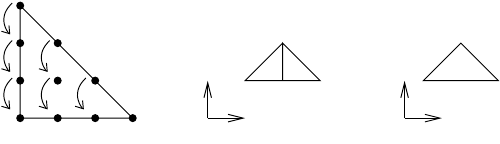_t}
\caption{On the left, a visualization of the ``feeding process'' in case (I) producing $\tilde{q}$. On the right, the Newton subdivisions of the curves $q$ and $\tilde{q}$ from Example \ref{ex-modi1}, the number next to each lattice point denotes the negative valuation of the coefficient. The initial of the coefficient of $x$ was cancelled via feeding from $xy$.}\label{fig-modiNewt1}
\end{figure}

In case (II), a term $a\cdot x^i y^j$ is replaced by $a\cdot (\frac{1}{n} z-\frac{1}{n} y+\frac{m}{n})^i y^j$ in the projection onto the $YZ$ plane, and by $a\cdot x^i (z-nx-m)^j$ in the projection onto the $XZ$ plane. We call this ``cone feeding'', the process is visualized in Figure \ref{fig-modiNewt2}.
Again, we are interested in cases where the valuations of the coefficients after the coordinate change are not as expected.

\begin{figure}
\input{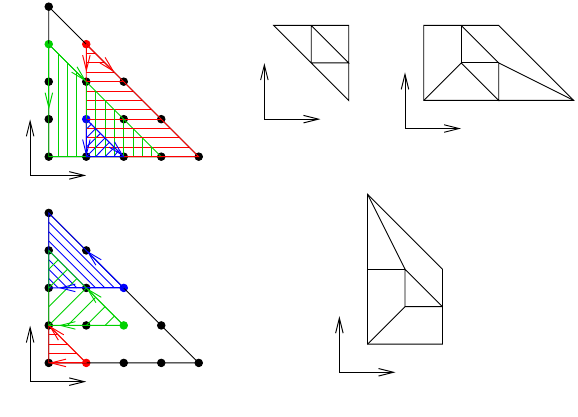_t}
\caption{On the left, a visualization of the ``cone feeding'' in case (II) producing $q_{xz}$, resp.\ $q_{yz}$. In each picture, the cones of three terms are exemplarily shown, corresponding to the red, green and blue lattice point. On the right, the Newton subdivisions of the curves $q$, $q_{xz}$, and $q_{yz}$ from Example \ref{ex-modi2}, the integer next to each lattice point denotes the negative valuation of the coefficient. The initial of the coefficient of $x$, $x^2$ and $x^3$ resp.\ $y$, $y^2$ and $y^3$ were cancelled via feeding from $xy$, $x^2y$ and $xy^2$.}\label{fig-modiNewt2}
\end{figure}

\begin{remark}
To keep the tropicalization of a point of tangency finite (i.e.\ in the tropicalization of the torus), we only modify at lifts of tropical lines that do not contain the tangency point. In order to pick a lift that contains the tangency point, one should extend tropicalization to the boundary (see \cite{Kajiwara,Pay09,Sha10}).
\end{remark}

\section{Lifting bitangent lines}\label{sec-lifting}
In this section, we find the lifting multiplicity of tropical bitangents, according to the combinatorics of the tangency points. We treat the cases where the tangency points are on one or two connected components of the intersection separately. Let $(\Lambda,P,Q)$ be a tropical tangent line. We assume throughout this section that $P$ and $Q$ are different points.

\subsection{Lifting tropical lines tangent at two points of different connected components} 

Let $C$ be a smooth algebraic plane curve tropicalizing to $\Gamma$. We require that $\Gamma$ is smooth, and in addition that $C$ is generic in a sense specified below. Moreover,  the vertices and intersection points of $\Lambda$ and $\Gamma$ are assumed to be integral points (we can always reach that by replacing $t^{\frac{1}{N}}$ by $t$ for some $N$). 
%In this section, we focus on tropical bitangents $\Lambda$ such that $\Lambda \cap \Gamma\supset S_1\dunion S_2$,  where $S_1$ and $S_2$ are bounded line segments (possibly points), and the (if necessary stable) intersection of $\Gamma$ and $\Lambda$ restricted to $S_i$ has multiplicity at least $2$ for $i=1,2$.

\begin{figure} [h]
\centering
\input{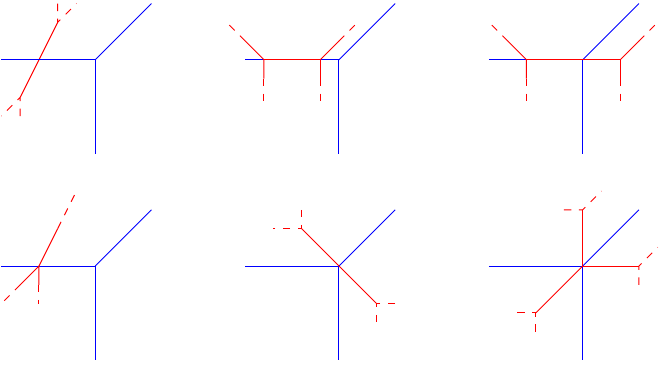_t}
\caption{The possible intersection types --- cases (1)-(5)}
\label{fig-alltypes}
\end{figure}

The possible combinatorial types of intersections of $\Lambda$ and $\Gamma$ (see Figure \ref{fig-alltypes}) of multiplicity greater or equal to two are:
\begin{enumerate}
\item An edge of $\Gamma$ intersecting an edge of $\Lambda$ transversally in a point $P$. 
\item A vertex of $\Gamma$ intersecting the interior of an edge of $\Lambda$ in a point $P$. 
\item An edge of $\Gamma$ intersecting $\Lambda$ non-transversally in a line segment.
\item The interior of an edge $e$ of $\Gamma$ intersecting the vertex $P$ of $\Lambda$. 
\item A vertex of $\Gamma$ intersecting the vertex of $\Lambda$.
\end{enumerate}

In the following theorem, we treat each tangency point separately, and show that the conditions they impose on the line concur. The total number of lifts will be the product of the  \emph{local lifting multiplicities}, $m(C,\Lambda,P_i)$, at  each point $P_i$. The local lifting multiplicity for the cases (1), (2) and (4) can be found in Table \ref{table1}.  The lifting multiplicity for the case (3) can be found in Proposition \ref{prop-horizontalsegment} (see also Lemma \ref{nonRegularIntersection} and Table \ref{table1}), and for case (5) in Proposition \ref{prop-vertexofboth}.

\begin{theorem} \label{Thm:Lifting}
Let $\Trop(C)=\Gamma$ be generic in the sense of Assumption \ref{genass}, and let $(\Lambda,P_1,P_2)$ be a tropical bitangent, such that $P_1$ and $P_2$ are in different connected components of $\Gamma\cap\Lambda$.
If $P_1$ and $P_2$ are in the interior of the same edge of $\Lambda$, then $(\Lambda,P_1,P_2)$ doesn't lift to a bitangent to $C$.
Otherwise, there are $m(C,\Lambda,P_1)\cdot m(C,\Lambda,P_2)$ many algebraic bitangents of $C$ tropicalizing to $\Lambda$ such that the tangency points tropicalize to $P_1$ and $P_2$,  where $m(C,\Lambda,P_i)$ is specified in each case individually.

% \[
%    m(C,\Lambda,P_i)  = \begin{cases}
%                   0 & \text{if $P_i$ is a transverse intersection of an edge of $\Gamma$ with an edge} \\ & \text{of $\Lambda$ (case (1))},\\
%     1 & \text{if $P_i$ is at a vertex of $\Gamma$ and an edge of $\Lambda$ (case (2))}, \\
%       2 & \text{if $P_i$ is the midpoint of a non-transverse intersection $S_i$}\\& \text{ of two edges (case (3))},\\
%                        |\det(u,v)| & \text{if $P_i$ is at a vertex of $\Lambda$ and an edge of $\Gamma$ of direction $u$,}\\& \text{and if the other tangency point is on the end of $\Lambda$ of} \\& \text{direction $v$ (case (4))}, \\
%                       
%  2& \text{if $P_i$ is a vertex of both $\Gamma$ and $\Lambda$ which intersect transversally}\\&\text{with multiplicity $2$ (case (5a))}.      
%         
%                    \end{cases}  
%\]

\end{theorem}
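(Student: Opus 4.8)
The plan is to turn the geometric bitangency condition into an explicit polynomial system over the Puiseux series, solve it at the level of initial terms, and then promote each initial solution to a genuine bitangent using the multivariate Hensel's lemma of Theorem~\ref{Thm:IFT}. Writing the line as $\ell = y + m + nx$ and the two tangency points as $(\tilde x_i,\tilde y_i)$, I would record for each $i\in\{1,2\}$ the three equations
$$q(\tilde x_i,\tilde y_i)=0,\qquad \ell(\tilde x_i,\tilde y_i)=0,\qquad W(\tilde x_i,\tilde y_i,m)=0,$$
with $W$ the Wronskian of~\eqref{eq-W}, for a total of six equations in the six unknowns $m,n,\tilde x_1,\tilde y_1,\tilde x_2,\tilde y_2$. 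The first reduction is to pass to initial terms: after normalizing valuations so that each equation carries a coefficient of valuation zero, the reduced equation over $\CC$ only sees the monomials dual to the vertices of the Newton subdivision meeting the tropical tangency point $P_i$. This is precisely what makes the analysis local, and it is the content of the case-by-case study feeding Table~\ref{table1}, Proposition~\ref{prop-horizontalsegment} and Proposition~\ref{prop-vertexofboth}.

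The heart of the argument is that the six equations decouple into two independent blocks of three. Since $P_1$ and $P_2$ lie on different connected components of $\Lambda\cap\Gamma$ and (outside the forbidden case) not in the interior of a common edge, the two points sit on different ends of $\Lambda$. On each end exactly two of the three terms $Y,M,N+X$ dominate, so the initial form of $\ell$ at $P_i$ involves only one of $m$, $n$, or the ratio $m/n$; a change of coordinates adapted to the direction of that end makes the three equations at $P_i$ depend solely on $(\tilde x_i,\tilde y_i)$ and that one coefficient. Crucially, distinct ends select distinct coefficients, so taking the two selected coefficients as the new coordinate-variables (this is an invertible change of $(m,n)$ in each combination $mn$, $m\cdot\frac{m}{n}$, $n\cdot\frac{m}{n}$) produces six genuinely independent variables split between the two blocks. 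The number of initial solutions of the full system is therefore the product of the numbers of initial solutions of the two blocks, and by definition these block-counts are the local multiplicities $m(C,\Lambda,P_1)$ and $m(C,\Lambda,P_2)$.

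To conclude that each initial solution lifts, and that distinct initial solutions give distinct lifts, I would check the hypotheses of Theorem~\ref{Thm:IFT}: the reduced system vanishes at the initial solution by construction, and the reduced Jacobian determinant must be invertible over $\CC$. The decoupling pays off once more, since in the new variables the Jacobian is block diagonal, so its determinant factors as the product of the two $3\times3$ block determinants and invertibility reduces to the corresponding local statement at each $P_i$; this is exactly where the genericity of Assumption~\ref{genass} enters, ruling out the degenerate initial configurations at which a block determinant would vanish. For the excluded case, where $P_1$ and $P_2$ lie in the interior of the same edge of $\Lambda$, both blocks constrain the \emph{same} coefficient (and the third coefficient drops out of both initial systems), so the six initial equations involve only five effective unknowns; I would show the two tangency conditions force generically incompatible residues for the shared coefficient, so no initial solution exists and hence there is no lift.

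The main obstacle I anticipate is not the bookkeeping of the decoupling but the uniform verification that the reduced Jacobian is invertible at every initial solution across all combinatorial types. This forces one to have the explicit local solutions for the initials in hand (the cases behind Table~\ref{table1}) and then to confirm that the conditions of Assumption~\ref{genass} are exactly strong enough to guarantee nonvanishing of each $3\times3$ block determinant. Coordinating the end-adapted coordinate changes with the Wronskian so that the block structure is genuinely diagonal — rather than merely triangular up to higher-order terms — is the delicate point on which the product formula ultimately rests.
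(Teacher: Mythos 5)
Your proposal is correct and follows essentially the same route as the paper: the same six-equation system from the Wronskian setup, the same decoupling into two local $3\times 3$ blocks because each end of $\Lambda$ selects exactly one of $m$, $n$, $m/n$, the same product formula via Theorem~\ref{Thm:IFT} with a block-diagonal Jacobian, and the same use of Assumption~\ref{genass} (distinct initial lifts) to kill the case of two tangencies on one end. The paper likewise delegates the local solution counts and Jacobian checks to the case-by-case propositions behind Table~\ref{table1}, exactly as you anticipate.
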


 For each of the cases (1)-(5), we assume without restriction that the tangency point $P$  is at $(0,0)$, which is on (the closure of) the horizontal end of $\Lambda$, and that the vertex of $\Lambda$ is at $(s,0)$ for some $s\in \mathbb{Z}_{\geq 0}$. Then the defining equation of any lift $\ell$ of $\Lambda$ is of the form 
\begin{equation}y+m+t^snx=0,\label{eq-line}\end{equation}
where $\val(m)=\val(n)=0$.

In order to state the genericity condition \ref{genass}, we first need to define an invariant of  tangency points which we call the \emph{initial lift}. 
Propositions \ref{prop:properGeneral} and \ref{prop-horizontalsegment} motivate our choice of terminology:  the initial lift equals the residue $\overline{m}$ of the constant coefficient of the line equation (\ref{eq-line}) in both cases (2) and (3).
%For intersections occuring in the open interior of an edge of $\Lambda$ (cases (1)-(3)), we define the \emph{initial lift} of $\Lambda$ and $C$ at $P$, $\ini(\Lambda,C,P)$ in each case that lifts (cases (2) and (3)). Propositions \ref{prop:properGeneral} and \ref{prop-horizontalsegment} show that the initial lift equals the residue $\overline{m}$ of the constant coefficient of the line equation (\ref{eq-line}).

\begin{definition}\label{def-initiallift}
Let $C=V(q)$ be a curve defined over $K$, and let $\Lambda$ be a bitangent of $\Gamma=\Trop(C)$, that falls into one of the cases (2) and (3) above. 
%such that the open interior of an end of $\Lambda$ either intersects a vertex $P$ of $\Gamma$ transversally (case (2)), or an edge $e$ non-transversally (case (3)). 
Then the  the \emph{initial lift} of $\Lambda$ and $C$ at $P$, denoted $\ini(\Lambda,C,P)$,  is as follows. 
\begin{itemize}
\item (Case 2) Let $(a+ bx^iy^j + cx^ky^l)\cdot x^{i'}y^{j'}$ be the three terms of  $q$ that correspond to the three regions of $\RR^2$ locally cut out around $P$ by $\Gamma$. %, where $\val(a)=\val(b)=\val(c)=0$ (by the assumption that $P=(0,0)$).
Then the initial lift $\ini(\Lambda,C,P)$ is defined to be the residue of 
$-(-\frac{ka}{b(k-i)})^{-k} \cdot (-\frac{ia}{c(k-i)})^{i}$.
\item (Case 3) Let $a x^iy^j$ and $b x^ky^l$ be the terms of $q$ that correspond to the two regions of $\RR^2$ adjacent to $P$. %, where $\val(a)=\val(b)=0$.
Then the initial lift $\ini(\Lambda,C,P)$ is defined to be the residue of the quotient $\frac{a}{b}$.   
\end{itemize}
\end{definition}

The genericity assumption in Theorem \ref{Thm:Lifting} can then be specified as follows: 
\begin{assumption}\label{genass}
With notation as in Theorem \ref{Thm:Lifting}, we require that the tropicalization $\Gamma$ of $C$ is smooth. Furthermore, if $P_1$ and $P_2$ are in the interior of the same edge of $\Lambda$, we assume that the initial lifts do not agree, i.e.\ $\ini(\Lambda,C,P_1)\neq \ini(\Lambda,C,P_2)$. If one of the points falls into case (5), we impose an additional condition, which is specified before Proposition \ref{prop-vertexofboth}.
\end{assumption}
\noindent This is a condition on the coefficients of the defining equation $q$ of $C$, and there are only finitely many cases for possible $\Lambda$ and $P_i$ to check.

\begin{remark}
In Section 5 of \cite{Chan2}, Chan and Jiradilok compute initials of lifts of tangent line for a fixed quartic curve in Honeycomb form. Their quartic does not satisfy our genericity assumption \ref{genass}, since all Puiseux series coefficients start with a coefficient one. In particular, initial lifts for intersections contained in the same edge of $\Lambda$ coincide. As a result, some of the tropical lines in that example would not lift generically, e.g.\ the one with the very left vertex in Figure 5 of \cite{Chan2}.
\end{remark}

\begin{proof}[Proof of Theorem \ref{Thm:Lifting}]
If one of the tangency points is in the interior of an edge of both $\Lambda$ and $\Gamma$, then Proposition \ref{prop-transverse} shows that there is no bitangent lifting $\Lambda$. This takes care of case (1). Otherwise,  as described in Section \ref{sec-wronskian}, each tangency point $P_i$ determines $m(C,\Lambda,P_i)$ possible values for  one of the coefficients $m$ of the line, and doesn't impose any condition on the other coefficient. 

If the points are in the interior of the same end of $\Lambda$, then they each determine the same coefficient of the line. Together with the genericity assumption, we reach a contradiction, and it follows that the line doesn't lift. Otherwise, each point determines the value of a different coefficient,  and we obtain $m(C,\Lambda,P_1)\cdot m(C,\Lambda,P_2)$ global lifts of $\Lambda$. 
The different cases (2)--(5) are dealt with in Propositions,  \ref{prop:properGeneral}, \ref{prop-horizontalsegment}, \ref{prop-vertexoflambda}, \ref{prop-vertexofboth} below respectively. 

\end{proof}

We now go through the cases (1)-(5).

\begin{proposition}[Case (1)]\label{prop-transverse}
Suppose that $\Lambda$ and $\Gamma=\Trop(C)$ meet transversally at $P$ along the interior of edges of both of them (see Figure \ref{fig-alltypes}). Then there is no line $\ell$ that is tangent to $C$ at a lift of $P$. In particular, $m(C,\Lambda, P)=0$.
\end{proposition}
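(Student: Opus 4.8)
The plan is to show that a transversal intersection along the interior of edges of both $\Lambda$ and $\Gamma$ forces an intersection multiplicity of exactly $1$ at $P$, which cannot support a tangency. Since $P$ lies in the interior of an edge of $\Gamma$, there is a single term of $q$ dominating on each of the two local regions cut out by that edge, so the initial form of $q$ at the tropical point $P$ is a binomial $a x^i y^j + b x^k y^l$ whose Newton edge is dual to the edge direction of $\Gamma$. Similarly, because $P$ is in the interior of an edge of $\Lambda$, the line $\ell$ restricts near $P$ to a binomial (two of the three terms $m, t^k n x, y$ dominate), dual to the edge direction of $\Lambda$. Transversality means the two edge directions are linearly independent, i.e.\ $|\det(u_1,u_2)| = 1$ by smoothness and the unimodularity of the relevant cells.

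The key computation I would carry out is to set up the local system as in Section~\ref{sec-wronskian}: the equations $q(\tilde x,\tilde y)=0$, $\ell(\tilde x,\tilde y)=0$, and the vanishing of the Wronskian $W(\tilde x,\tilde y,m)$ from (\ref{eq-W}), reduced to their initial terms using the dominant monomials at $P$. First I would observe that, after passing to initial forms, the equations $\overline{q}=0$ and $\overline{\ell}=0$ already pin down a unique point in the torus over the residue field (two independent binomials meeting transversally meet in one point with multiplicity one). Then I would show that the initial Wronskian $\overline{W}$ does \emph{not} vanish at that point: since the gradient of the initial binomial $\overline{q}$ is not parallel to the direction $(n,1)$ of the line — precisely because the Newton edges are transversal, so the tangent direction of $C$ at the lift is not the line direction — the determinant in (\ref{eq-W}) has nonzero residue. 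This is the heart of the argument: transversality of the tropical curves translates into non-vanishing of the reduced Wronskian, which is exactly the obstruction to tangency.

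Concretely, I would compute $\partial \overline{q}/\partial x$ and $\partial \overline{q}/\partial y$ of the binomial initial form at the solution of $\overline{q}=\overline{\ell}=0$, and check that $\det\begin{pmatrix} \partial\overline q/\partial x & n \\ \partial\overline q/\partial y & 1\end{pmatrix}$ reduces to a nonzero element of $\CC$. Because the two monomials of $\overline q$ differ by the edge-direction exponent vector $(k-i, l-j)$, which is (up to sign) the primitive direction of the edge of $\Gamma$, and because this is not a scalar multiple of $(1,0)$ relative to the line direction after the normalization (\ref{eq-line}), the partials cannot be proportional to $(n,1)$. Hence $\overline W \neq 0$, so the Wronskian equation cannot be satisfied by any lift of $P$ respecting the prescribed valuations.

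I expect the main obstacle to be bookkeeping the normalization and the initial-term reduction cleanly: one must verify that exactly the two edge-monomials of $q$ survive in the initial form at $P$ (no coincidental cancellation, guaranteed by $P$ being in the interior of the edge and hence strictly in the interior of a dual edge of the Newton subdivision), and that the relevant terms of $\ell$ are the ones dictated by which region of $\Lambda$ contains $P$. Once these initial forms are correctly identified, the non-vanishing of $\overline W$ is a short determinant computation, and it shows there is no solution with $\val(\tilde x),\val(\tilde y)$ prescribed by $P$; therefore $m(C,\Lambda,P)=0$, concluding the proof.
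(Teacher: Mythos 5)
Your overall strategy --- reduce the three local equations of Section \ref{sec-wronskian} to their initial forms and show that the reduced Wronskian cannot vanish at any common zero of the reduced $q$ and $\ell$ --- is exactly the paper's argument (see the second column of Table \ref{table1}, where $W$ reduces, modulo the relation $\overline q=0$, to the single monomial $ia\overline x^{\,i-1+i'}\overline y^{\,j'}$, which is nonzero in the torus). However, your framing contains a genuine error: a transversal intersection of the interiors of two edges does \emph{not} force intersection multiplicity $1$; it has multiplicity $|\det(u_1,u_2)|$, and in the situation of the proposition this is at least $2$ --- otherwise $P$ would not appear in the list of candidate tangency configurations at all and there would be nothing to prove. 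Smoothness of $\Gamma$ makes the dual edge primitive, not the determinant equal to $1$. Consequently your claim that $\overline q=\overline\ell=0$ ``pin down a unique point \dots with multiplicity one'' is also false: writing the initial form as $\overline q=a\overline x^{\,i}\overline y^{\,j}+b\overline x^{\,k}\overline y^{\,l}$, the system has $|k-i|=|\det(u_1,u_2)|\geq 2$ solutions in the torus. Fortunately this does not sink the proof, because the Wronskian computation applies verbatim at \emph{each} such solution; but the opening ``plan'' rests on a false premise and should be discarded.

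A second, smaller slip: you identify the exponent difference $(k-i,l-j)$ with the direction of the edge of $\Gamma$, whereas it is the direction of the \emph{dual} edge in the Newton subdivision, i.e.\ perpendicular to the edge of $\Gamma$. The condition actually needed is $k\neq i$, that is, $(k-i,l-j)$ not proportional to $(0,1)$ --- not to $(1,0)$ as you wrote --- and this is precisely what transversality with the normalized horizontal end of $\Lambda$ provides: at a common zero of the binomial one computes $\overline q_x=\frac{b\overline x^{\,k}\overline y^{\,l}}{\overline x}(k-i)$ and $\overline q_y=\frac{b\overline x^{\,k}\overline y^{\,l}}{\overline y}(l-j)$, while the column $(n,1)$ in the Wronskian reduces to $(0,1)$ under the normalization (\ref{eq-line}) because the coefficient of $x$ is $t^kn$ with $k>0$; hence $\overline W=\overline q_x\neq 0$ exactly because $k\neq i$. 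With these two corrections your argument coincides with the paper's.
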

%\begin{figure} [h]
%\centering
%\input{transverse.pdf_t}
%\caption{Case (1): A transverse intersection.}
%\label{fig-transverse}
%\end{figure}

\begin{proof}
As in Section \ref{sec-wronskian}, we obtain three equations $q$, $\ell$ and the Wronskian $W$, which are listed in the second column of Table \ref{table1}. One easily checks that any solution would have $\val(x)=\val(y)=0$, but then $W$ has a unique term of lowest valuation. Hence, there is no solution.
%
%We solve the system of equations given by $q$, $\ell$ and the Wronskian W (see Section \ref{sec-wronskian}). The computations are listed in table \ref{table1}.
%
%Note that a solution $(x,y)$ with $\val(x)=\val(y)=0$ cannot exist, since the second term of $W$ vanishes and thus the first is the only nonzero term of valuation zero.
\end{proof}

\begin{proposition}[Case (2)]
\label{prop:properGeneral}
Suppose that $\Lambda$ and $\Gamma=\Trop(C)$ (where $C$ satisfies the genericity assumption \ref{genass}) meet properly along a vertex $P=(0,0)$ of $\Gamma$ in the interior of the horizontal edge of $\Lambda$ (see Figure \ref{fig-alltypes}). Then for any Puiseux series $n$ of  valuation 0,  there exists a unique bitangent $\ell = y+m+t^kn$ of $C$ lifting $\Lambda$, where $k$ is the distance from $P$ to the vertex of $\Lambda$. In particular, $m(C,\Lambda,P)=1$, and the initial lift $\ini(\Lambda,C,P)$ as defined in \ref{def-initiallift} equals $\overline{m}$.
\end{proposition}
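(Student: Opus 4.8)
The plan is to analyze the point $P=(0,0)$ in isolation, solving the three equations \eqref{eq-q}, \eqref{eq-l}, \eqref{eq-W} for the initial terms of the tangency point $(\tilde x,\tilde y)$ and of the constant coefficient $m$, while treating $n$ as a free parameter of valuation $0$. Because $(\tilde x,\tilde y)$ must tropicalize to $P=(0,0)$, we have $\val(\tilde x)=\val(\tilde y)=0$. First I would normalize: after rescaling $q$ by a power of $t$ and dividing out the monomial $x^{i'}y^{j'}$ common to the three vertices of the triangle dual to $P$ (harmless in the torus), the coefficients $a,b,c$ of Definition \ref{def-initiallift} have valuation $0$ and every other monomial of $q$ has strictly positive valuation at $(0,0)$, so the initial form of $q$ at $P$ is $F=a+bx^iy^j+cx^ky^l$, supported on a unimodular triangle; in particular $|il-jk|=1$.

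Next I would pass to residues. Since the coefficient of $x$ in $\ell$ has positive valuation (it is $t^kn$ with $k\geq 1$, as $P$ lies in the interior of an edge of $\Lambda$), the reductions of \eqref{eq-q}, \eqref{eq-l} and \eqref{eq-W} become, with $(u,v)=(\overline{\tilde x},\overline{\tilde y})$,
\[
\overline F(u,v)=0,\qquad v+\overline m=0,\qquad \overline{F_x}(u,v)=0.
\]
The essential phenomenon is that the Wronskian degenerates to the $n$-independent condition $\overline{q_x}=\overline{F_x}(u,v)=0$; this is why $n$ stays free and only $m$ is determined. Writing the first and third equations in terms of $A'=\overline b\,u^iv^j$ and $B'=\overline c\,u^kv^l$ gives the linear system $A'+B'=-\overline a$ and $iA'+kB'=0$. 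I would then note that a tangency at a vertex of a smooth curve has local intersection multiplicity $2$, which equals the lattice width of the dual triangle in the direction of the horizontal edge of $\Lambda$, forcing $\{i,k\}=\{1,2\}$; hence $i\neq k$ and $i,k\geq 1$. The linear system then determines $A',B'$, hence $u^iv^j$ and $u^kv^l$, uniquely, and unimodularity of $\left(\begin{smallmatrix} i & j\\ k & l\end{smallmatrix}\right)$ makes the monomial map $(u,v)\mapsto(u^iv^j,u^kv^l)$ a bijection of $(\CC^\ast)^2$. Thus $(u,v)$ is unique, $\overline m=-v$ is unique, and $m(C,\Lambda,P)=1$. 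Inverting the monomial map and substituting produces a closed form for $\overline m$ which, after accounting for the orientation of the triangle, is exactly the expression defining $\ini(\Lambda,C,P)$.

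To turn this initial solution into an actual bitangent for each fixed $n$ with $\val(n)=0$, I would apply Theorem \ref{Thm:IFT} to $f=(q,\ell,W)$ in the variables $(\tilde x,\tilde y,m)$. After normalizing the $f_i$ to have nonnegative coefficient valuations with a valuation-zero coefficient, one checks $\overline f(u,v,\overline m)=0$ (established above) and that the reduced Jacobian is invertible. Using $\overline{q_x}(u,v)=0$ (which kills the leading entry of the row coming from $q$) and that $W$ is independent of $m$, a direct expansion gives
\[
\overline{\det J_f}=\pm\,\overline{F_{xx}}(u,v)\cdot\overline{F_y}(u,v).
\]
A short computation with $\overline{F_x}(u,v)=0$ and the known values of $A',B'$ gives $\overline{F_{xx}}(u,v)=ik\,\overline a/u^2$, nonzero since $i,k\geq 1$, and $\overline{F_y}(u,v)=\overline a\,(il-jk)/(v(k-i))$, nonzero by unimodularity and $i\neq k$. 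Hence $\overline{\det J_f}\neq 0$ and Theorem \ref{Thm:IFT} yields a unique Puiseux lift, i.e.\ a unique bitangent $\ell=y+m+t^knx$ of $C$ tangent at a lift of $P$.

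The main obstacle I anticipate is the invertibility of the reduced Jacobian: this is precisely the nondegeneracy hypothesis of Theorem \ref{Thm:IFT}, and it is where both ingredients enter --- smoothness of $\Gamma$ (unimodularity of the dual triangle, giving $\overline{F_y}\neq 0$) and the shape $\{i,k\}=\{1,2\}$ forced by the tangency multiplicity (giving $\overline{F_{xx}}\neq 0$ and $i\neq k$). A secondary point requiring care is the rigorous justification that the reduced equations depend only on the three monomials dual to $P$ --- that the whole computation is genuinely local at $P$ --- as anticipated in Section \ref{sec-wronskian}, together with the orientation bookkeeping needed to match $\overline m$ to the formula of Definition \ref{def-initiallift}.
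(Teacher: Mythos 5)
Your proposal is correct and follows essentially the same route as the paper: reduce the three equations $q$, $\ell$, $W$ to residues (the Wronskian degenerating to $\overline{q_x}=0$ because the slope $t^kn$ has positive valuation), solve the resulting linear system in the monomial coordinates $A=\overline{x}^i\overline{y}^j$, $B=\overline{x}^k\overline{y}^l$ using unimodularity to get the unique initial solution, and then verify $\overline{\det J_f}=\pm\,\overline{q_{xx}}\cdot\overline{q_y}\neq 0$ before invoking Theorem \ref{Thm:IFT}. One small inaccuracy: the claim that the horizontal lattice width $2$ forces $\{i,k\}=\{1,2\}$ is false in general (e.g.\ the unimodular triangle with $\{i,k\}=\{-1,1\}$ also has width $2$ and no vertical dual edge), but this is harmless since the only facts you actually use are $i\neq k$ and $ik\neq 0$, which hold in every case and are derived in the paper directly from the absence of a horizontal edge of $\Gamma$ at $P$; you should cite that reason rather than the width classification.
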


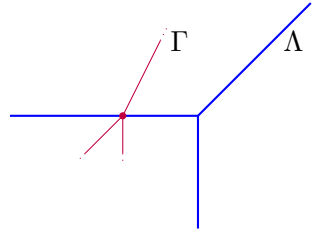
\begin{figure}
\centering
\begin{tikzpicture}[scale=.5]

\draw[thick, blue] (0,0) to (-5,0);
\draw[thick, blue] (0,0) to (0,-3);
\draw[thick, blue] (0,0) to (3,3);
\node [right] at (2,2) {$\Lambda$};

\draw[purple] (-2,0) to (-1,2);
\draw[purple, dotted] (-1,2) to (-0.8,2.4);
\draw[purple] (-2,0) to (-2,-1);
\draw[purple, dotted] (-2,-1) to (-2,-1.2);
\draw[purple] (-2,0) to (-3,-1);
\draw[purple, dotted] (-3,-1) to (-3.2,-1.2);
\draw[fill,purple] (-2,0) circle [radius=0.08];

\node [right] at (-1,2) {$\Gamma$};

\end{tikzpicture}   
\caption{Case (2): Proper intersection along a horizontal edge.}
\label{fig:proper}
\end{figure}

\begin{proof}
Again, as in Section \ref{sec-wronskian}, we obtain three equations, which are listed in the third column of Table \ref{table1}.
Note that $k\neq i$: otherwise, $\Gamma$ would have a horizontal edge adjacent to $P$, contradicting the assumption that the intersection, locally, consists of a single point. 
Since $\Gamma$ is smooth, the vectors $(i,j)$ and $(k,l)$ form a unimodular basis. We may therefore perform a change of coordinates $A = \overline x^i \overline y^j$ and $B = \overline x^k \overline y^{l}$, and solve for $A,B$. One easily checks that 
the values for $x,y,m$ appearing in the table are the unique solution for  the system of equations. 
%We can use this fact and the expressions $A$ and $B$ from table \ref{table1} to solve for $\overline{m}$.
It follows that $\overline m$ equals the initial lift $\ini(\Lambda,C,P)$, which by definition \ref{def-initiallift} is the residue of 
\[
-\Big(-\frac{k\overline{a}}{\overline{b}(k-i)}\Big)^{-k}\cdot \Big(-\frac{i\overline{a}}{\overline{c}(k-i)}\Big)^{i}.
\]

In order to make use of the non-Archimedean implicit function theorem as outlined in Section \ref{sec-wronskian}, we still need to show that the determinant of the residue of the Jacobian of the equations $\ell$, $q$ and $W$ with respect to the variables $x$, $y$, and $m$, after plugging in the initial values $\overline x$, $\overline y$ and $\overline m$, is nonzero. 
 We have $\overline{q_y(\overline x,\overline y)}$ is nonzero if and only if $j \overline b\overline x^i \overline y^{j-1}+ l\overline c \overline x^k \overline y^{l-1} $ is nonzero. The latter equals
$$ j\overline b\frac{A}{y}+l\overline c\frac{B}{y}= \frac{1}{y}\cdot \Big(-j\overline b \frac{\overline ckB}{i\overline b}+l\overline c B\Big)= \frac{\overline cB}{iy}\Big(-jk+il\Big)= \pm \frac{\overline cB}{iy} \neq 0,$$ where we use that the vectors $(i,j)$ and $(k,l)$ form a unimodular basis, and that $i\neq 0$ since $\Gamma$ has no horizontal edge at $P$. 
Similarly, $\overline{q_{xx}(\overline x,\overline y)}$ is nonzero if and only if $i(i-1)\overline b A+k(k-1)\overline cB$ is nonzero, which equals
$$-i(i-1)\overline b\frac{k\overline cB}{i\overline b}+k(k-1)\overline cB= k\overline cB (-(i-1)+(k-1))=k\overline cB(k-i) \neq 0,$$
where we use that $k\neq i$ and $k\neq 0$, since $\Gamma$ has no horizontal edge at $P$.

None of the computations above depended on the value of $n$, and therefore the tangent lifts uniquely for any choice of trivially valued $n$.

\end{proof}

 \begin{sidewaystable}
    \centering \small
{\renewcommand{\arraystretch}{0.35}}%
  \begin{tabular}{|c||c|c|c|c|}
    \hline
    \small{ }\normalsize& Case (1) (Prop \ref{prop-transverse}) %% (for $\ww_1\!=\!0$)
    & Case (2) (Prop \ref{prop:properGeneral}) & Lemma \ref{nonRegularIntersection} & Case (4) (Prop \ref{prop-vertexoflambda})\\ %[4ex]
    \hline
    \hline 
$q$ & $(ax^i + by^j)\cdot x^{i'} y^{j'}  $& $(a+ bx^iy^j + cx^ky^l)\cdot x^{i'}y^{j'}$  &  $a + cx^2 + dxy $& $(a\cdot x^\lambda+b\cdot y^\mu)\cdot x^i y^j$\\ \hline
with &  $i,j>0$ & $ij-kl=1$ & --- & --- \\ \hline
$\ell$ & $y+m$ &$y+m$ & $y+m$ & $y+m+nx$\\ \hline
\multirow{6}{*}{$W$} & \multirow{2}{*}{$iax^{i-1}\cdot x^{i'}y^{j'}$}&\multirow{2}{*}{$(ibx^{i-1}y^j + $}  & & \\ & \multirow{3}{*} {$  + (ax^i+by^j)$} &\multirow{3}{*}{$ckx^{k-1}y^l)x^{i'}y^{j'} + W_0$} & $2cx+dy$& $(a  \lambda x^{\lambda-1} - \mu b y^{\mu-1}\cdot n)\cdot x^i y^j$\\
 & & & &\\ 
&\multirow{2}{*}{$\cdot i'x^{i'-1} y^{j'}$} & & & \\
 & & & &\\ \hline
$J$ & --- & $
\begin{pmatrix}
  0 & 1 & 1 \\
  0 & \overline{q_y(\overline x,\overline y)} & 0 \\
  \overline{q_{xx}(\overline x,\overline y)} & \overline{q_{xy}(\overline x,\overline y)} & 0
 \end{pmatrix} $
 & $\begin{pmatrix}
  0 & 1 & 1 \\
  2\overline{c}\overline x + \overline{d}\overline{y} & k\overline{d}\overline{x} & 0 \\
  2\overline{c} & \overline{d} & 0
 \end{pmatrix}$& $\begin{pmatrix}
  \overline n & 1 & 1 \\
 \overline a \lambda \overline x^{\lambda-1} \overline x^i \overline y^j  & \overline b \mu \overline y^{\mu-1}\overline x^i \overline y^j & 0 \\
 \overline a \lambda(\lambda-1)\overline x^{\lambda-2} \overline x^i \overline y^j& -\overline b \mu(\mu-1)\overline y^{\mu-2} \overline n \overline x^i \overline y^j& 0
 \end{pmatrix} 
 $\\ \hline
$x$ & --- &$A^{l}B^{-j}$ & $\pm\sqrt{\frac{\overline{a}}{\overline{c}}}$& $[-\frac{\overline a}{\overline b}(-\frac{\lambda}{\mu \overline n}^\mu)]^{\frac{1}{\mu-\lambda}}$\\ \hline
$y$ & --- & $A^{-k}B^i$& $\mp\frac{2\sqrt{\overline{a}\overline{c}}}{\overline{d}} $& $-\frac{\mu \overline n}{\lambda}\overline x$\\ \hline
$m$ & --- &$-A^{-k}B^i$ & $ \pm\frac{2\sqrt{\overline{a}\overline{c}}}{\overline{d}}$& $\frac{\mu \overline n}{\lambda}\overline x-\overline n \overline x$\\ \hline
$m(C,\Lambda, P)$ & $0$ & $1$ & $2$& $|\det(u,v)|$\\ \hline

       \end{tabular}\bigskip
\caption{This table shows the leading terms of the equations (any further summand is $O(t)$). In each column, the valuations of the coefficients are zero: $\val(a)=\val(b)=\val(c)=0$. 
In the third column, $A$ and $B$ are expressions that simplify the computations: $A = \overline x^i \overline y^j=-\frac{\overline c k B}{i\overline b}$ and $B = \overline x^k \overline y^{l}=\frac{\overline a i}{\overline c(k-i)}$. $W_0$ is a term that vanishes on every solution of $q$. In the last column, $u=\binom{\mu}{\lambda}$ is the direction of the edge of $\Gamma$ passing through the vertex of $\Lambda$, and $v$ is the direction of the edge of $\Lambda$ which carries the second tangency point. }\label{table1}  \end{sidewaystable}

 \begin{proposition}[Case (3)]\label{prop-horizontalsegment}
 Suppose that $P$ is in the midpoint of a line segment in the intersection of $\Lambda$ and $\Gamma$ (see Figure \ref{fig-alltypes}), and  $C$ satisfies the genericity assumption \ref{genass}.
Then for any Puiseux series $n$ of valuation $0$ there are exactly two bitangent lines $\ell$ with equations $y+m+t^kn\cdot x$ lifting $\Lambda$, where $k$ is the distance from the segment to the vertex of $\Lambda$. In particular, $m(C,\Lambda,P)=2$. The initial lift $\ini(\Lambda,C,P)$ as defined in \ref{def-initiallift} 
equals $\overline{m}$ for each of the two local solutions.
 \end{proposition}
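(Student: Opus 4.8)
The plan is to resolve the non-proper (segment) intersection by a tropical re-embedding as in case (I) of Section~\ref{subsec-modification}, and then to reduce the local tangency problem to the normal form treated in Lemma~\ref{nonRegularIntersection}. Place $P=(0,0)$ on the horizontal end of $\Lambda$, so that $\Gamma$ has a horizontal edge through $P$; this edge is dual to a vertical edge of the Newton subdivision, and since $\Gamma$ is smooth the two triangles adjacent to that edge are unimodular, which pins down the neighbouring monomials. After factoring out a common power of $t$, normalize so that the two terms of $q$ meeting along this edge are $a\,x^iy^j$ and $b\,x^iy^{j+1}$ with $\val(a)=\val(b)=0$; by Definition~\ref{def-initiallift} the initial lift is $\overline{a/b}$. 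I would modify $\RR^2$ along $L=\max\{Y,0\}$ and re-embed $q$ via $z=y+m$ (the leading part of $\ell$, since $t^knx$ is of valuation $k>0$), studying the projection $\pi_{XZ}$, i.e.\ $\tilde q=q(x,z-m)$ and its feeding.

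\textbf{Cancellation, the initial lift, and the normal form.} The two leading terms feed as
\[
a\,x^i(z-m)^j+b\,x^i(z-m)^{j+1}=x^i(z-m)^j\big((a-bm)+bz\big),
\]
so the coefficient of $x^iz^0$ is $(-m)^j(a-bm)$. For generic $m$ this has valuation $0$ and the re-embedding merely bends the curve (the ordinary modification); the \emph{special} re-embedding occurs exactly when $\overline{a-bm}=0$, i.e.\ when $\overline m=\overline{a/b}=\ini(\Lambda,C,P)$. This both forces the value of the initial lift claimed in the statement and triggers the cancellation that, via the feeding bookkeeping together with the adjacent unimodular monomials, exposes a trivalent vertex of $\Trop(V(\tilde q))$ at the image of $P$ --- turning the segment intersection into a proper one. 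After a unimodular change of coordinates standardizing this vertex (and relabelling $z$), the local equation takes the model form $a+cx^2+dxy+O(t)$ of Lemma~\ref{nonRegularIntersection}, with $\val(c)=\val(d)=0$.

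\textbf{Solving and lifting.} For this normal form the tangency system from Section~\ref{sec-wronskian}, namely $q=0$, $\ell=y+m=0$, and $W=2cx+dy=0$, solves explicitly: $W$ gives $x=dm/(2c)$, and substituting $y=-m$ into $q$ yields $d^2m^2=4ac$, hence the two residue-level solutions
\[
\overline m=\pm\frac{2\sqrt{\overline a\,\overline c}}{\overline d},\qquad \overline x=\pm\sqrt{\tfrac{\overline a}{\overline c}},\qquad \overline y=\mp\frac{2\sqrt{\overline a\,\overline c}}{\overline d}
\]
recorded in Table~\ref{table1}; these are distinct since $\overline a\,\overline c\neq0$, which is the source of $m(C,\Lambda,P)=2$. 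For each solution the residue of the Jacobian determinant of $(\ell,q,W)$ in the variables $(x,y,m)$ equals $\pm 2\,\overline d\sqrt{\overline a\,\overline c}$, a unit because $\val(a)=\val(c)=\val(d)=0$, so Theorem~\ref{Thm:IFT} lifts each initial solution uniquely to a Puiseux-series bitangent; since the initial system has exactly these two solutions, there are \emph{exactly} two lifts. Exactly as in Proposition~\ref{prop:properGeneral}, the term $t^knx$ has valuation $k>0$ and enters none of these leading-order computations, so the two solutions persist for every trivially valued $n$, and $\overline m=\overline{a/b}=\ini(\Lambda,C,P)$ holds for both.

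\textbf{Main obstacle.} The hard part will be the middle step: verifying rigorously that the cancellation at $\overline m=\overline{a/b}$, propagated through the feeding process, produces precisely the conic-type local model $a+cx^2+dxy$ --- that the constant term and the $x^{i\pm1}z^0$ contributions are pushed to positive valuation while the $x^2$- and $xy$-analogues retain valuation $0$, with no accidental further degeneration. This is exactly where smoothness of $\Gamma$ (unimodularity of the adjacent triangles) and the genericity Assumption~\ref{genass} are needed, and it is the content that Lemma~\ref{nonRegularIntersection} is designed to encapsulate; the remaining computations are routine.
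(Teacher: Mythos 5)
Your strategy coincides with the paper's: re-embed along $z=y+(\cdots)$, force the cancellation $\overline m=\overline{a/b}$, and reduce to the conic normal form of Lemma~\ref{nonRegularIntersection}; your explicit solution of that normal form and the Jacobian value $\mp 2\overline d\sqrt{\overline a\overline c}$ agree with Table~\ref{table1}. However, the step you defer as the ``main obstacle'' is a genuine gap, and it is \emph{not} the content of Lemma~\ref{nonRegularIntersection} --- that lemma only treats a curve already in the form $a+bx+cx^2+dxy+O(t)$ with $\val(a)=\val(c)=\val(d)<\val(b)$. The issue is quantitative. A single residue-level cancellation makes the coefficient of $x^iz^0$ acquire valuation strictly greater than $0$, but that is not enough: since $P$ is the \emph{midpoint} of the segment, the two lattice points forming unimodular triangles with $e^\vee$ in the columns $x^{i\pm1}$ carry coefficients of valuation exactly $\lambda>0$, and the trivalent vertex with a weight-$2$ downward edge only appears once the point $x^iz^0$ drops below the edge joining $x^{i-1}z^0$ and $x^{i+1}z^0$ in the Newton subdivision, i.e.\ once its coefficient reaches valuation greater than $\lambda$. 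The paper achieves this by \emph{iterating} the re-embedding, choosing the higher-order coefficients of $z=y+a_0+a_1t+\cdots$ one order at a time to cancel successive terms, and using smoothness to control the neighbouring columns. Your one-step cancellation does not by itself produce the normal form, so the reduction is incomplete as written; this iteration is the missing idea.

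A second, smaller omission: you implicitly assume that the vertex of $\Lambda$ lies strictly to the right of the segment, so that the $t^knx$ term is negligible and a case (I) modification along $\max\{Y,0\}$ suffices. The paper also treats the configuration where it does not (its case $k\leq 0$), which formally requires the case (II) modification with three projections; one then argues that the tangency point lies in the cell $\sigma_4$ and that the cone feeding degenerates to the vertical feeding, after which the same argument applies. Neither point invalidates your outline, but both must be supplied before the proof is complete.
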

 
%\begin{figure} [h]
%\centering
%\input{segment.pdf_t}
%\caption{Case (3): Intersection along a horizontal segment.}
%\label{HorizontalSegment}
%\end{figure} 
% 

 \begin{proof}
As always, we assume without restriction that the segment is horizontal, and that $P=(0,0)$.
Since $\Gamma$ is smooth, $\Lambda$ and $\Gamma$ locally intersect with multiplicity 2. %By Subsection \ref{subsec-intersect}, $(0,0)$ must be the tropicalization of the point of tangency for any lift $V(\ell)$ of $\Lambda$.

Let $e$ denote the edge of $\Trop(C)$ which intersects $\Lambda$ in a segment. Let $e^\vee$ denote the dual edge in the Newton subdivision. Without restriction, we can assume that the negative valuations of the coefficients of $q$ corresponding to the end vertices of $e^\vee$ are $0$. We let $i$ be the exponent of $x$ in the terms corresponding to those two end vertices, i.e.\ the edge $e^\vee$ is in the column $\{x=i\}$ in the Newton polygon.

We distinguish two cases, depending on whether the distance $k$ of the right adjacent vertex of $e$ from the vertex of $\Lambda$ (measured from the left) is positive or not. Both cases are depicted in Figure \ref{fig-alltypes}.
Assume first that $k>0$.
Because we assume smoothness of $\Gamma=\Trop(C)$, the edge $e^\vee$ must form triangles with two vertices on the left and right of lattice distance $1$ to $e^\vee$. Their negative valuations must be equal, and strictly smaller $0$, since we assume that $(0,0)$ is the midpoint of the edge. Assume their negative valuations are $-\lambda$ for some $\lambda>0$.

If $k>0$, we can assume that $n=0$, and $\Lambda$ is the horizontal line $y=0$. We can do this, since  $y+m+nt^kx$ is tangent if and only if $y+m$ is.

\begin{figure} [h]
\centering
\begin{tikzpicture}[scale=.5]
\footnotesize
\draw[thick, blue] (-2,0) to (2,0);
\draw[thick, blue] (-2,0) to (-1,1);
\draw[thick, blue] (-2,0) to (-4,-1);
\draw[thick, blue] (2,0) to (3,1);
\draw[thick, blue] (2,0) to (2,-1);

\draw[fill,blue] (0,0) circle [radius=0.08];
\node [above] at (0,0) {$(0,0)$};

\draw[fill,blue] (-2,0) circle [radius=0.08];
\node [left] at (-2,0) {$(-\lambda,0)$};

\draw[fill,blue] (2,0) circle [radius=0.08];
\node [right] at (2,0) {$(\lambda,0)$};

\node [left] at (-1.2,1) {$ q$};

%%% The Newton polygon %%%%
\draw[step=1cm,gray,very thin] (-9.9,-1.9) grid (-5.1,2.9);
\node [left] at (-10,0) {$j$};
\node [below] at (-7,-2) {$i$};

\draw[fill,blue] (-7,0) circle [radius=0.08];
\draw[fill,blue] (-7,1) circle [radius=0.08];
\draw[fill,blue] (-8,2) circle [radius=0.08];
\draw[fill,blue] (-6,0) circle [radius=0.08];

\draw[blue] (-7,0) to (-7,1);
\draw[blue] (-7,1) to (-8,2);
\draw[blue] (-7,0) to (-8,2);
\draw[blue] (-7,0) to (-6,0);
\draw[blue] (-7,1) to (-6,0);

\node [below right] at (-6,0) {$-\lambda$};
\node [above left] at (-8,2) {$-\lambda$};

\end{tikzpicture}   
\caption{The curve and its dual polygon.}
\label{HorizontalAndNewton}
\end{figure}
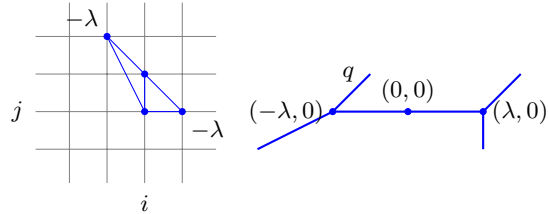
The coefficients of all the monomials corresponding to the  $x^{i-1}$ and $x^{i+1}$ column have negative valuation strictly smaller than $-\lambda$. Otherwise we would not get a triangle in the Newton polygon. Let 
\begin{equation}x^iy^j(y+a) \label{eq-initialmon}\end{equation} be the monomials of $q$ dual to $e$, where $a\in K$ has valuation $0$. 
Write 
\[
a = a_0 + a_1t +\ldots.
\]

Following the technique described in Subsection \ref{subsec-modification} (Case (I)), we modify the plane at $\Lambda$ and pick a re-embedding of $\Trop(C)$ that will produce a proper intersection. We set $z = y+a_0$ and consider the projection of the re-embedded curve in the modified plane, defined by $\tilde{q} = q(x,z-a_0)$. A line $\ell(x,z-a_0)$ is bitangent to $\tilde{q}$ if and only if $\ell$ is bitangent to $q$, as in Remark \ref{rem-coordinatechange}.
To compute the Newton subdivision of $\tilde{q}$, note that the term $x^iy^{j+1}$ and $a\cdot x^i y^j$  of $q$ are replaced by $x^i(z-a_0)^{j+1}$ and $a\cdot x^i(z-a_0)^j$ respectively. 
Then 
\[
(z-a_0)^{j+1} + (z-a_0)^j a = z^{j+1} + \sum_{k=0}^{j}\bigg[\binom{j+1}{k}(-a_0)^{j+1-k} + \binom{j}{k}a(-a_0)^{j-k}\bigg]\cdot z^k.
\]
It follows that the coefficient of $x^i\cdot z^0$ is $(-a_0)^{j+1} + (-a_0)^j\cdot a$ which has valuation strictly larger than $0$. All the other coefficients on the  $x^i$ column have valuation $0$.

On the $x^{i-1}$  column, the point corresponding to $x^{i-1}z^j$ and all the points below it  have negative valuation $-\lambda$. This is because $x^{i-1}y^{j}$ was the only point at that height in the Newton polygon of $q$ (as a consequence of our smoothness assumption on $\Gamma$) and so no cancellation can happen.
The same holds for the coefficients in the $x^{i+1}$-column.
It follows that Newton polygon of $\tilde{q}$ locally looks as in Figure \ref{NewtonAfterModification}.
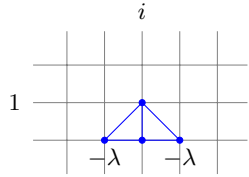
\begin{figure} [h]
\centering
\begin{tikzpicture}[scale=.5]
\footnotesize
%\draw[thick, blue] (-2,0) to (3,0);
%\draw[thick, blue] (-2,0) to (-1,1);
%\draw[thick, blue] (-2,0) to (-4,-1);
%\draw[thick, blue] (3,0) to (4,1);
%\draw[thick, blue] (3,0) to (3,-1);
%
%\draw[fill,blue] (0,0) circle [radius=0.08];
%\node [above] at (0,0) {$(0,0)$};
%
%\draw[fill,blue] (-2,0) circle [radius=0.08];
%\node [left] at (-2,0) {$(-\lambda,0)$};
%
%\draw[fill,blue] (3,0) circle [radius=0.08];
%\node [right] at (3,0) {$(\mu,0)$};
%
%\node [left] at (-1.2,1) {$q$};

%%% The Newton polygon %%%%
\draw[step=1cm,gray,very thin] (-9.9,-2.9) grid (-4.2,.9);
\node [left] at (-10,-1) {$1$};
\node [above] at (-7,1) {$i$};

\draw[fill,blue] (-7,-1) circle [radius=0.08];
\draw[fill,blue] (-6,-2) circle [radius=0.08];
\draw[fill,blue] (-8,-2) circle [radius=0.08];
\draw[fill,blue] (-7,-2) circle [radius=0.08];

\draw[blue] (-8,-2) to (-7,-1);
\draw[blue] (-7,-1) to (-6,-2);
\draw[blue] (-6,-2) to (-8,-2);
\draw[blue] (-7,-2) to (-7,-1);

\node [below ] at (-6,-2) {$-\lambda$};
\node [below] at (-8,-2) {$-\lambda$};
%\node [left, below] at (-8,-1) {$-\lambda$};
%\node [left, below] at (-8,0) {$-\lambda$};
%\node [below] at (-7,-2) {$<0$};

\end{tikzpicture}
\caption{The Newton polygon after the first modification.}
\label{NewtonAfterModification}
\end{figure}

The point corresponding to the $x^i z^0$ is visible in the Newton subdivision only if it is above the line between the points corresponding to $x^{i-1} z^0$ and $x^{i+1} z^0$.

We then adapt our modification such that more and more coefficients get cancelled. The exact numbers we have to pick for this depend on coefficients of higher terms of $a$ and on other coefficients of $q$, but in any case, we can find a modification of the form $z = y+a_0 + \ldots$ such that the Newton subdivision and the curve are as in Figure \ref{NewtonFinal}. In particular, we can reach that the coefficient $x^i z^0$ has a negative valuation that is below the line imposed by the terms $x^{i-1} z^0$ and $x^{i+1} z^0$.
\begin{figure} [h]
\centering
\begin{tikzpicture}[scale=.5]
\footnotesize
\draw[blue] (-2,0) to (-1,-1);
\draw[blue] (-1,-1) to (0,0);
\draw[blue, thick] (-1,-1) to (-1,-2);
\node [left] at (-1,-1.5) {$2$};

%%% The Newton polygon %%%%
\draw[step=1cm,gray,very thin] (-9.9,-2.9) grid (-4.2,.9);
%\draw[step=1cm,gray,very thin] (-9.9,-1.9) grid (-5.1,.9);
\node [left] at (-10,-1) {$1$};
\node [above] at (-7,1) {$i$};

\draw[fill,blue] (-7,-1) circle [radius=0.08];
\draw[fill,blue] (-6,-2) circle [radius=0.08];
\draw[fill,blue] (-8,-2) circle [radius=0.08];
%\draw[fill,blue] (-7,-2) circle [radius=0.08];

\draw[blue] (-8,-2) to (-7,-1);
\draw[blue] (-7,-1) to (-6,-2);
\draw[blue] (-6,-2) to (-8,-2);
\node [below ] at (-6,-2) {$-\lambda$};
\node [below] at (-8,-2) {$-\lambda$};
%\node [below ] at (-6,-2) {$-\mu$};
%\node [below] at (-8,-2) {$-\lambda$};
%\node [below] at (-7,-2) {$<0$};

\end{tikzpicture}
\caption{The Newton polygon and dual curve after multiple modifications.}
\label{NewtonFinal}
\end{figure}
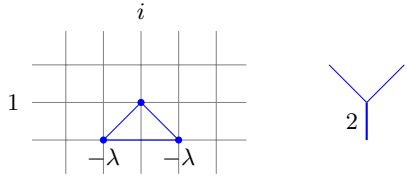

Depending on $m$, the re-embedded tropical line meets the tropicalization of the re-embedded $C$ either below, above, or at the vertex. Lemma \ref{nonRegularIntersection} below shows that lines meeting the curve away from the vertex do not lift to a tangent, and that the line that meets the curve at the vertex yields precisely two possible lifts $\tilde m$ for its constant coefficient. 

Since the re-embedded lift $\ell$ of $\Lambda$ has to show the special cancellation behaviour (since generically we obtain the modification of $\Lambda$ which does not meet our requirements above), we conclude that for each of the two lifts, the initial coefficient of $m$ equals $a_0$, which by equation (\ref{eq-initialmon}) equals the residue class of the quotient of the two coefficients of the defining equation $q$ of $C$ corresponding to regions of $\RR^2\setminus \Gamma$. The latter equals the initial lift  $\ini(\Lambda,C,P)$ by definition \ref{def-initiallift}.
Lemma \ref{nonRegularIntersection} provides two distinct solutions for the coefficient of $m$ of lowest order which is not determined by our choice of modification. 

If $k\leq 0$, the argument works similarly. We have to modify at $\Lambda$ defined by the tropical polynomial $\max\{0,N+x,y\}$  with $N< 0$. We are in case (II) described in Subsection \ref{subsec-modification}, and so in principle we have to consider three projections and the cone feeding. However, since the coefficient $n$ of $x$ in a lift $\ell$ of $\Lambda$ has a higher valuation, we can neglect its effect and focus on the ``vertical down'' feeding in the projection to $x$ and $z$ as in Figure \ref{fig-modiNewt1}. Accordingly, we consider only the part of the curve in the cell $\sigma_4$ of the modified plane, because this is the cell that contains the point of tangency. For that reason, we can follow along the same lines as in the case where $k>0$ and obtain a re-embedding such that $\Trop(\tilde{g})$ has a Newton subdivision as depicted in Figure \ref{NewtonFinal} (except $(0,0)$ is not the midpoint of the edge $e$, but the midpoint of the segment of intersection of $\Trop(C)$ and $\Lambda$). Applying Lemma \ref{nonRegularIntersection} again yields the result. 
\end{proof}

\begin{remark}\label{rem-nonproper3valent}
When $k=0$ of Proposition \ref{prop-horizontalsegment} requires more attention: in that case, the vertex of the line coincides with a vertex of the curve, and the stable intersection multiplicity along the component of intersection may be higher than $2$. Therefore,  we may have more than one tropical tangency point in the component. Under the assumption in this subsection, that the points $P$ and $Q$ of $(\Lambda,P,Q)$ are in different connected components, the same methods as in Proposition \ref{prop-horizontalsegment} apply, and the local lifting multiplicity is still $2$. When $P$ and $Q$ are in the same component, this is dealt with in Subsection \ref{sec-starshaped}.%, Proposition \ref{prop-starshaped}

%If the two other edges adjacent to $V$ are of the same direction as the rays of the line and at least one of them is bounded, then we are in fact dealing with a non proper intersection that contains a $3$-valent vertex as considered below in Subsection \ref{sec-starshaped}, Proposition \ref{prop-starshaped}. If the two other edges adjacent to $v$ are of the same direction as the rays of the line but both are unbounded, then both tangency points are on the same ray of $\Lambda$, and, by our genericity assumption \ref{genass}, $\Lambda$ does not lift. If the two other edges are not of the same direction, then the methods of Propisition \ref{prop-horizontalsegment} take effect.
\end{remark}

The following lemma was used in the proof of Proposition \ref{prop-horizontalsegment} above, for a point $P$ in the newly attached cell of the modified plane. Locally, the point had coordinates $P=(P_x,0)$ with $P_x<0$. 
In the proof below, we assume without restriction that the point of tangency is at the origin --- this just amounts to a shift, and  multiplication with an appropriate power of $t$.

\begin{lemma}\label{nonRegularIntersection}
Denote $v=(0,0)$, and let $\Gamma=\Trop(\tilde{q})$  with
\[
\tilde{q} = a+bx+cx^2+dxy+O(t), \val{a}=\val{c}=\val{d} < \val{b}.
\] 
%(see Figure \ref{fig:nonSmooth}).
Let $\Lambda$ be a tropical line with vertex at $(k,l)$ for some $k>0$,  whose horizontal end meets $\Gamma$. If the intersection is at the vertex $v$, then for any choice of trivially valued $n$, there are two tangents $\ell=y+\tilde m+t^kn$ of $V(\tilde q)$ lifting $\Lambda$. Otherwise, $\Lambda$ doesn't lift. 
%
%Let $\Gamma=\Trop(V(\tilde q))$ be a tropical curve that locally at $P=(0,0)$ consists of a down pointing edge $e$ of weight $2$, and two more edges with weight $1$.  $\tilde{q} = a+bx+cx^2+dxy+O(t)$, with 
%Assume that the negative valuation of the coefficient of the linear term of $\tilde q$ restricted to $e$ is strictly below the average of the constant and quadratic term.
% Let $\Lambda$ be a tropical line whose vertex is at $(k,0)$ for $k>0$, and whose horizontal end meets $\Gamma$ at $e$. If the intersection is at the vertex $P$, then for any choice of trivially valued $n$, there are two tangents of $V(\tilde q)$ lifting $\Lambda$, defined by the equation $\ell=y+\tilde m+t^kn$. Otherwise, $\Lambda$ doesn't lift. 
   %the tropical line defined by $\max\{Y,M\}$ (i.e.\ $\Lambda$ meets $\Gamma$ with multiplicity $2$ near $P$). 
%Then for any choice of trivially valued $n$, $\Lambda$ lifts to a tangent of $V(\tilde q)$, defined by the equation $\ell=y+\tilde m+t^kn$,  if and only if the intersection point is at $P$. In that case there are precisely two choices for $\tilde m$, i.e.\ $\Lambda$ lifts to two distinct tangent lines.
\end{lemma}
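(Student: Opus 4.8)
The plan is to run the local procedure of Section~\ref{sec-wronskian}: in the three unknowns $x$, $y$, $m$ (with the trivially valued $n$ treated as a fixed parameter) I set up the equations $\tilde q=0$, $\ell=y+\tilde m+t^k n x=0$, and the Wronskian $W=\tilde q_x-t^k n\,\tilde q_y=0$, reduce them to their initial forms over the residue field $\CC$, count the solutions, and then invoke Theorem~\ref{Thm:IFT} to promote each initial solution to a genuine Puiseux-series bitangent.

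First I would extract the initial forms, as recorded in the relevant column of Table~\ref{table1}. Because $\val(b)>\val(a)=\val(c)=\val(d)=0$ and $k>0$, the monomial $bx$ of $\tilde q$ and the monomial $t^k n x$ of $\ell$ both have positive valuation, and the contribution $t^k n\,\tilde q_y$ to $W$ is likewise of positive valuation. When the tangency point tropicalizes to the vertex $v=(0,0)$ we have $\val(x)=\val(y)=0$, and the three equations reduce to
\[
\overline y+\overline m=0,\qquad \overline a+\overline c\,\overline x^{2}+\overline d\,\overline x\,\overline y=0,\qquad 2\overline c\,\overline x+\overline d\,\overline y=0 .
\]

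Solving the reduced Wronskian for $\overline y=-\tfrac{2\overline c}{\overline d}\overline x$ and substituting into the reduced equation of $\tilde q$ collapses the latter to $\overline a-\overline c\,\overline x^{2}=0$, whence $\overline x=\pm\sqrt{\overline a/\overline c}$. Since $\val(a)=\val(c)=0$ these two roots are distinct and nonzero, and each yields $\overline m=\pm\tfrac{2\sqrt{\overline a\,\overline c}}{\overline d}$, giving the two claimed values for the constant coefficient of $\ell$. To lift each, I verify the hypotheses of Theorem~\ref{Thm:IFT}: all three equations have coefficients of non-negative valuation with a valuation-$0$ coefficient, and the residue of the Jacobian of $(\ell,\tilde q,W)$ in $(x,y,m)$, expanded along the $m$-column, has determinant $\overline d^{\,2}\overline y$, which is nonzero because $\overline d\neq0$ and $\overline y\neq0$. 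None of this depends on $n$, so for every trivially valued $n$ each initial solution lifts uniquely, producing exactly two bitangents.

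For the remaining case, where the horizontal end meets $\Gamma$ away from $v$, the intersection is transverse along the interior of an edge of $\Gamma$ (no edge of $\Gamma$ at such a point is horizontal), and the argument mirrors Proposition~\ref{prop-transverse}: only the two monomials of $\tilde q$ dual to that edge survive in the reduction, and the reduced Wronskian is either a single nonvanishing monomial in $\overline x,\overline y$ or is incompatible with the reduced equation of $\tilde q$; in either case the initial system has no solution with $\overline x\neq0$, so $\Lambda$ does not lift. I expect the only real subtlety to be the bookkeeping of valuations that decides which monomials dominate at the tangency point in each position of the horizontal end, so that the correct initial forms of $\tilde q$, $\ell$ and $W$ are extracted; once this is settled, the remaining work is the elementary solving and Jacobian check indicated above.
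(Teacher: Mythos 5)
Your proposal is correct and follows essentially the same route as the paper: the paper's proof is a terse reference to the equations, initial solutions $\overline x=\pm\sqrt{\overline a/\overline c}$, $\overline m=\pm\frac{2\sqrt{\overline a\overline c}}{\overline d}$ and Jacobian recorded in Table \ref{table1}, plus the observation that away from the vertex the valuations force an inconsistent initial system, all of which you have simply written out in full (your Jacobian determinant $\overline d^{\,2}\overline y\neq 0$ agrees with the table up to an apparent typo there).
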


\begin{proof}
As usual, we set up the three equations as in Subsection \ref{sec-wronskian} (see Table \ref{table1}). When the line passes below the vertex, the assumption on the valuation of the coefficients implies that there is no solution to the equations. When the line passes at the vertex, one checks that, for any value of $n$, there are two solutions for the initial terms of $x,y,m$, such that the conditions of Theorem \ref{Thm:IFT} are satisfied.

\begin{figure}[h]\label{fig:nonSmooth}
\centering
\begin{tikzpicture}[scale=.5]

\draw[thick, blue] (0,0) to (-5,0);
\draw[thick, blue] (0,0) to (0,-3);
\draw[thick, blue] (0,0) to (3,3);
\node [right] at (2,2) {$\ell$};

\draw[fill,purple] (-2,0) circle [radius=0.08];
\draw[purple] (-2,0) to (-3,1);
\draw[purple, dotted] (-3,1) to (-3.2,1.2);
\draw[purple] (-2,0) to (-1,1);
\draw[purple, dotted] (-1,1) to (-0.8,1.1);
\draw[purple] (-2,0) to (-2,-2);
\draw[purple, dotted] (-2,-2) to (-2,-2.2);

\node [right] at (-1,2) {$\tilde q$};
\node [right, font=\footnotesize] at (-2,-1) {$2$};

\end{tikzpicture}   
\caption{A local intersection as in Lemma \ref{nonRegularIntersection} --- in particular, the type of intersection we obtain after modifying.}
\label{CurveAfterModification}
\end{figure}
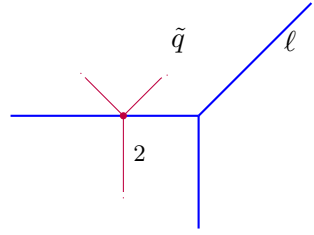

%Without restriction we can assume that that $\tilde q$ is given by the equation listed in table \ref{table1}. 
%Suppose that a line lifting $\Lambda$ is given by an equation of the form $\ell = y + \tilde m $. 
%Here, we make no assumption on $\val(\tilde m)$, since $\Lambda$ can meet $\Trop(V(\tilde q))$ above, at or below $P$. The equations, the Jacobian and the initial solutions are listed in table \ref{table1}. The Jacobian has nonzero determinant.
%It follows from the computations that the tangency point tropicalizes to $(0,0)$, and that $\val(\tilde m)=0$.
%Since there are two choices for the initial of the coefficient $\tilde m$ of $\ell$, we have two lifts of the tropical bitangent $\Lambda$.
\end{proof}

\begin{proposition}[Case (4)]\label{prop-vertexoflambda}
Suppose that $\Lambda$ and $\Gamma=\Trop(C)$ (where $C$ satisfies the genericity assumption \ref{genass}) meet with multiplicity greater one at the vertex $P=(0,0)$ of $\Lambda$, which is in the interior of an edge $e$ of direction $u$ of $\Gamma$ (see Figure \ref{fig-alltypes}). Assume the other tangency point $Q$ is on the end of $\Lambda$ of direction $v$. Then there are $m(C,\Lambda,Q)\cdot |\det(u,v)|$ bitangents $\ell = y+m+nx$ lifting $\Lambda$.
\end{proposition}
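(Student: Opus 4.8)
The plan is to treat the tangency at the vertex $P$ separately, show that it contributes a factor $|\det(u,v)|$, and combine it with the factor $m(C,\Lambda,Q)$ coming from $Q$ exactly as in the proof of Theorem \ref{Thm:Lifting}. First I would pin down the local shape of $q$ at $P=(0,0)$. Since $P$ lies in the interior of the edge $e$ of $\Gamma$, which is dual to an edge $e^\vee$ of the Newton subdivision, the reduction to initial terms (Section \ref{sec-wronskian}) only sees the two monomials at the endpoints of $e^\vee$; writing these as $a\,x^{i+\lambda}y^{j}$ and $b\,x^{i}y^{j+\mu}$ gives the leading form $q=(a\,x^{\lambda}+b\,y^{\mu})x^{i}y^{j}+O(t)$ of Table \ref{table1}, with $u=\binom{\mu}{\lambda}$ the primitive direction of $e$ (orthogonal to the direction $(\lambda,-\mu)$ of $e^\vee$). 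With $\ell=y+m+nx$ and $\val(m)=\val(n)=0$ forced by the vertex being at the origin, and using $q=0$ to discard the terms of the Wronskian coming from the common factor $x^{i}y^{j}$, the three local equations reduce at the residue level to $a\,x^{\lambda}+b\,y^{\mu}=0$, $y+m+nx=0$ and $a\lambda\,x^{\lambda-1}-\mu b n\,y^{\mu-1}=0$.

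Next I would count the residue solutions. The second point $Q$ lies on one of the three ends of $\Lambda$, and by the discussion in Section \ref{sec-wronskian} its own three equations determine $\bar x_Q,\bar y_Q$ together with exactly one of $m$, $n$ or $m/n$ (according as $v=(-1,0)$, $(1,1)$ or $(0,-1)$), in $m(C,\Lambda,Q)$ ways, and impose no further condition; this is the decoupling that turns the global count into a product. Freezing at $P$ the coefficient dictated by $Q$'s end, each local system collapses to one power equation: freezing $n$ yields $y=-\tfrac{\mu n}{\lambda}x$ and $x^{\mu-\lambda}=-\tfrac{a}{b}(-\tfrac{\lambda}{\mu n})^{\mu}$; freezing $m$ yields $y=-\tfrac{\mu m}{\mu-\lambda}$ uniquely and then $x^{\lambda}=-\tfrac{b}{a}(-\tfrac{\mu m}{\mu-\lambda})^{\mu}$; and freezing $m/n=r$ yields $x=\tfrac{\lambda r}{\mu-\lambda}$ uniquely and then $y^{\mu}=-\tfrac{a}{b}\,x^{\lambda}$. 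These have $\mu-\lambda$, $\lambda$ and $\mu$ simple roots respectively, which is precisely $|\det(u,v)|$ because $\det(u,(1,1))=\mu-\lambda$, $\det(u,(-1,0))=\lambda$ and $\det(u,(0,-1))=-\mu$. Alternatively, a monomial change of coordinates carrying $v$ to $(-1,0)$, under which $|\det(u,v)|$ is invariant, reduces all three to the frozen-$m$ computation. Genericity guarantees that the constant under each root is nonzero, so there are exactly $|\det(u,v)|$ distinct initial lifts at $P$, giving distinct values of the free coefficient and hence distinct lines.

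I would then promote each residue solution to a Puiseux-series bitangent via Theorem \ref{Thm:IFT}. After the normalization of Section \ref{sec-wronskian} making the $Q$-equations depend only on the single coefficient they fix, and ordering the six unknowns as the $Q$-variables followed by the $P$-variables, the residue Jacobian of the six equations is block lower triangular: the $Q$-equations involve only $\bar x_Q,\bar y_Q$ and that coefficient (the contribution of the free coefficient has positive valuation and vanishes in the residue), while the $P$-equations depend on it but on none of the $Q$-variables. It therefore suffices to check that each diagonal block is invertible. The $Q$-block is handled by the relevant one of Propositions \ref{prop-transverse}, \ref{prop:properGeneral}, \ref{prop-horizontalsegment}. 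For the $P$-block in the frozen-$n$ frame I would expand the $3\times 3$ matrix in the last column of Table \ref{table1}; substituting $y=-\tfrac{\mu n}{\lambda}x$, its determinant is a nonzero multiple of $\lambda+\mu-2\lambda\mu$, which vanishes only for $\lambda=\mu=1$, i.e.\ when $u$ is parallel to the diagonal end and this intersection type does not occur; the other two frames are analogous (or follow by the coordinate change above). Hence each initial solution lifts uniquely, and multiplying the two local counts gives $m(C,\Lambda,Q)\cdot|\det(u,v)|$ bitangents.

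The step I expect to be the crux is the uniform appearance of $|\det(u,v)|$: the local data at $P$ — the edge $e$ and the exponents $\lambda,\mu$ — is blind to where $Q$ sits, yet the multiplicity depends on the end $v$ carrying $Q$. The real content is that freezing the coefficient attached to $Q$'s end determines which of the two coordinates at $P$ becomes rigid and which power equation controls the other, and the three resulting exponents $\mu-\lambda$, $\lambda$, $\mu$ are exactly the three lattice determinants. Carrying this bookkeeping out cleanly — either through the three explicit computations or through the single $GL_2(\mathbb{Z})$ change of coordinates, together with the matching Jacobian check in each frame — is where the argument has to be done carefully.
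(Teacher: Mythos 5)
Your proposal follows essentially the same route as the paper: reduce to the three local equations of Table \ref{table1}, use the decoupling of Section \ref{sec-wronskian} to freeze the coefficient determined by $Q$, solve the resulting power equation to get $|\det(u,v)|$ initial solutions at $P$, and invoke Theorem \ref{Thm:IFT} after checking the Jacobian; the paper only writes out the diagonal-end case and dispatches the other two by a coordinate change, whereas you do all three explicitly, but the content is the same. One computational slip: in the frozen-$n$ frame the Jacobian determinant is a nonzero multiple of $\bigl[(\mu-1)\overline n\overline x+(\lambda-1)\overline y\bigr]$, which after substituting $\overline y=-\tfrac{\mu\overline n}{\lambda}\overline x$ becomes a nonzero multiple of $\mu-\lambda$, not of $\lambda+\mu-2\lambda\mu$; your conclusion (invertibility) is unaffected, since $\lambda\neq\mu$ is already forced by the hypothesis that the intersection at the vertex is transverse of multiplicity at least two, but the stated reason should be $\lambda\neq\mu$ rather than the exclusion of $\lambda=\mu=1$.
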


%\begin{figure} [h]
%\centering
%\input{vertexlambda.pdf_t}
%\caption{Case (4): Intersection of an edge of $\Gamma$ with a vertex of $\Lambda$.}
%\label{Fig-vertexlambda}
%\end{figure}

\begin{proof}
Let $u=\binom{\mu}{\lambda}$. We assume without restriction that the other tangency point $Q$ is on the diagonal end of $\Lambda$, so that $|\det(u,v)|=|\mu-\lambda|$. The other cases follow from this case using the appropriate coordinate change.
Since the intersection multiplicity is at least $2$, it follows that $\lambda,\mu\neq 0$, and that $\lambda\neq \mu$.  
Table \ref{table1} lists the three equations as in Subsection \ref{sec-wronskian} (where we use that the lowest order terms of $q$ vanish in $(x,y)$).
The fact that $Q$ is on the diagonal end of $\Lambda$ implies that we can view one of $m(C,\Lambda,Q)$ possible lifts for $n$ as fixed, and we have to find lifts for $m$.

To compute the Jacobian (see table \ref{table1}), we use again that the lowest order terms of $q$ vanishes at $(x,y)$.
Its determinant is 
\begin{align*} &-\overline x^{2i} \overline y^{2j} a b  \lambda \mu \overline x^{\lambda-2}\overline y^{\mu-2}[(\mu-1)\overline n \overline x + (\lambda-1)\overline y]\\ = & 
\overline x^{2i} \overline y^{2j} a b   \mu \overline x^{\lambda-2}\overline y^{\mu-2}
\overline n \overline x  [ (\mu-1) \lambda  - \mu (\lambda -1)]\\ =&  
\overline x^{2i} \overline y^{2j} a b   \mu \overline x^{\lambda-2}\overline y^{\mu-2}
\overline n \overline x  [  \mu-\lambda],
\end{align*}
 where in the first equality we use $\overline y = -\frac{\mu \overline n}{\lambda}\overline x$. This is nonzero for any choice of initial $\overline x$, $\overline y$, since $\lambda\neq \mu$.
\end{proof}

We next deal with the case where the tropical tangency point is at a vertex $P$ of both the line and the curve. 
We set up notations as follows.   
Assume (without restriction) that the triangle dual to the vertex of $\Gamma$ has vertices $(i,0), (0,j), (k,l)$ such that $j>i,0$, and $(k,l)$ is  to the right of the affine line spanned by the  two other vertices (see Figure \ref{fig-dualTriangle}). Further, we assume that the second tangency point $Q$ is in the interior of the diagonal end of the line, so that $n$ is already fixed. Note that these conditions do not restrict generality, since they will always be satisfied after multiplying by a suitable power of $y$ or permuting the parameters.
The curve will further need to be generic in the sense that it doesn't have any hyperflex points or tritangent lines (these are indeed generic conditions, see \cite[Proposition 11.10]{EH} and \cite{MO}), and $n$ avoids the following values: $\frac{a}{b}, \frac{ac^{j-1}}{(-b)^j},  \frac{cb^{-l-1}}{(-a)^{-l}}, \frac{bc^{i-1}}{(-a)^i}$.

\begin{figure}
\begin{tikzpicture}[scale=.7]
  
\coordinate (x) at (0,4); 
\coordinate (y) at (2,0); 
\coordinate (z) at (3,3); 

\draw [blue] (x)--(y)--(z)--cycle;
\node [left] at (x) {$(0,j)$};
\node [below] at (y) {$(i,0)$};
\node [right] at (z) {$(k,l)$};
\end{tikzpicture} 
\caption{The triangle $\Delta_\Gamma$ dual to a vertex of $\Gamma$.} \label{fig-dualTriangle}

 \end{figure}
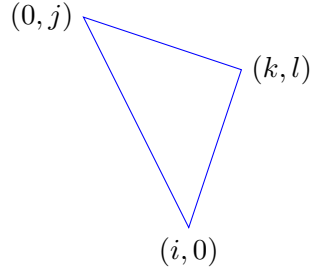

\begin{proposition}[Case (5)]\label{prop-vertexofboth}
With notations as above, when a curve $\Gamma$ and a line $\Lambda$ meet at a vertex $P$ of both, there are $m(C,\Lambda,Q)\cdot M$ bitangents $\ell = y+m+nx$ lifting $\Lambda$, where $M$ is determined as follows.

If $(i,j)\neq(1,1)$ and $i,l\neq 0$ then 
 \[
    M  = \begin{cases}
k+l-i+1 & \text{if $i+1<j+1\leq k+l$}\\
j-i+1 & \text{if $i+1\leq k+l<j+1$}\\
j+1-k-l & \text{if $k+l<i+1<j+1$}.\\
    
                    \end{cases}  
\]

If $l=0$ and $i\neq 1$ then $i$ is non-positive and $M=1-i$.

If $i=0$ then $M=l+1$  when $l$ is positive,  and $M=-l$ when $l$ is negative. 

If $i=j=1$ then
\[
M = \begin{cases}
1 & \text{if $k$ or $l$ equals $0$}\\
2 & \text{if $k,l\neq 0$}.\\
\end{cases}
\]

%When $a<0$, then there are $m(C,\Lambda,Q)\cdot M$ bitangents $\ell = y+m+nx$ lifting $\Lambda$, where
% \[
%    M  = b-a+1.    
%    \begin{cases}
%                 $?$ & \text{if $c+d<a+1<b+1$}\\
%                  b-a+1 & \text{if $ a+1<c+d$}\\         
%                    \end{cases}  
%\]

 \end{proposition}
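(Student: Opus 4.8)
The plan is to reduce, exactly as in the proofs of the previous cases (see Subsection \ref{sec-wronskian} and Table \ref{table1}), to a count of solutions of the initial equations over $\CC$. Since $P$ is a vertex of both $\Lambda$ and $\Gamma$, the only terms of $q$ that survive in the reduction are the three monomials dual to the triangle $\Delta_\Gamma$ of Figure \ref{fig-dualTriangle}; writing $\overline a,\overline b,\overline c$ for the residues of their valuation-zero coefficients, the relevant initial form is
\[
f = \overline a\, x^i + \overline b\, y^j + \overline c\, x^k y^l .
\]
Because the second tangency point $Q$ already fixes the residue $\overline n$, the line contributes only $\ell = y+\overline n x+\overline m$ with $\overline m$ free, and the tangency condition at $P$ is the vanishing of $f$ together with the Wronskian $W=\partial_x f-\overline n\,\partial_y f$, neither of which involves $\overline m$. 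As in the proof of Theorem \ref{Thm:Lifting}, each solution $(x_0,y_0)\in(\CC^\ast)^2$ then determines a single line through $\overline m=-y_0-\overline n x_0$, so that the desired local multiplicity is
\[
M=\#\bigl\{(x,y)\in(\CC^\ast)^2 : f(x,y)=0,\ W(x,y)=0\bigr\},
\]
and the global count factors as $m(C,\Lambda,Q)\cdot M$.

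Next I would compute $M$ by the Bernstein--Kushnirenko theorem, $M=MV(\Delta_f,\Delta_W)$, where $\Delta_f=\mathrm{conv}\{(i,0),(0,j),(k,l)\}$ and $\Delta_W$ is the Newton polygon of $W$, namely the convex hull of the four candidate exponents $(i-1,0),\,(k-1,l),\,(0,j-1),\,(k,l-1)$, with a vertex omitted precisely when its coefficient $\overline a i$, $\overline c k$, or $\overline c l$ vanishes, i.e.\ when $i=0$, $k=0$, or $l=0$. The genericity hypotheses feed in here exactly as needed: smoothness of $\Gamma$ makes $\Delta_\Gamma$ unimodular, so $\mathrm{Area}(\Delta_f)=\tfrac12$; requiring $\overline n$ to avoid the listed values $\frac{a}{b},\,\frac{ac^{j-1}}{(-b)^j},\,\frac{cb^{-l-1}}{(-a)^{-l}},\,\frac{bc^{i-1}}{(-a)^i}$ kills the facial subsystems of $\{f,W\}$ along the edges of $\Delta_f$, so that Bernstein's bound is attained, every solution lies in the torus, and none has $\overline m=-y_0-\overline n x_0=0$ (which would fail to lift). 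Finally, the absence of tritangents guarantees that distinct solutions give distinct $\overline m$, and the absence of hyperflexes guarantees each tangency is simple, so $\overline{\det(J)}\neq0$ and Theorem \ref{Thm:IFT} lifts each of the $M$ initial solutions uniquely.

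The remaining work is to evaluate $MV(\Delta_f,\Delta_W)=\mathrm{Area}(\Delta_f+\Delta_W)-\mathrm{Area}(\Delta_f)-\mathrm{Area}(\Delta_W)$ in each configuration, and this is where the stated case division originates. In the main case $(i,j)\neq(1,1)$, $i,l\neq0$, all four exponents are present, and the combinatorial type of $\Delta_W$ --- and hence the shape of the Minkowski sum --- is governed by the position of the apex $(k,l)$, equivalently by how $k+l$ sits relative to $i+1$ and $j+1$; the three orderings give the three values $k+l-i+1$, $j-i+1$, and $j+1-k-l$. The degenerate cases are those where $\Delta_W$ loses a vertex or drops dimension: $l=0$ forces, via unimodularity, $j=1$ and $i\leq 0$ (with the ``to the right'' convention pinning $k=i+1$) and $\Delta_W$ a horizontal segment, giving $M=1-i$; $i=0$ likewise forces $j=1$, $k=1$ with $\Delta_W$ a triangle of the appropriate height, giving $M=l+1$ or $M=-l$ according to the sign of $l$; and $i=j=1$ makes the exponents $(i-1,0)$ and $(0,j-1)$ collide at the origin, so $\Delta_W$ is a triangle when $k,l\neq0$ (whence $M=2$) and a segment when $k$ or $l$ vanishes (whence $M=1$). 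I would verify each value on a small unimodular model before asserting the general formula.

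The main obstacle is the bookkeeping in this last step: one must determine, in every configuration, which of the four candidate points are genuine vertices of $\Delta_W$ (rather than interior points) and then compute the mixed volume correctly through the dimension drops and vertex collisions of the degenerate cases. A secondary but essential point is checking that the four excluded values of $\overline n$ are exactly the facial-degeneracy values --- so that Bernstein's count is sharp and no solution leaks to the boundary or to $\overline m=0$ --- since it is this that turns the mixed-volume upper bound into the exact equality $M=MV(\Delta_f,\Delta_W)$.
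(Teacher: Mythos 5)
Your strategy is sound, but it is in substance the polytope-dual of the paper's argument rather than an independent one. The paper likewise reduces to counting trivially-valued torus solutions of the initial system $\{q,W\}$ (with $\overline n$ fixed by $Q$ and $\overline m$ determined by each solution), but computes that count as the local stable tropical intersection of $\Gamma=\Trop(q)$ with $\Omega=\Trop(W)$ at the vertex, invoking \cite{OP}; since both fans are centered at the origin, that local intersection number is exactly your $MV(\Delta_f,\Delta_W)$, so the two case-by-case evaluations are the same computation in dual languages. The genuinely delicate point is also the same in both formulations: whenever $\Gamma$ and $\Omega$ share a ray (equivalently, whenever $\Delta_f$ and $\Delta_W$ have parallel edges), one must rule out solutions that tropicalize away from the vertex. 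The paper does this either by substituting $cx^ky^{l-1}=(-ax^i-by^j)/y+O(t)$ into $W$ to obtain a $W'$ whose tropicalization meets $\Gamma$ transversally, or, in the boundary cases, by citing \cite{OR} and using the excluded values of $n$; what this buys is that one never has to argue about sharpness of the Bernstein bound, only about transversal tropical intersections.

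The one place your write-up is wrong as stated, rather than merely unverified, is the claim that the four excluded values of $\overline n$ are ``exactly the facial-degeneracy values'' that make the Bernstein bound an equality. That is correct for the degenerate cases $i=j=1$, $k=0$, $i=0$, $l=0$, where the shared facial system acquires a common torus zero precisely at the listed value of $n$. But in the main overlapping cases ($i+1\le k+l<j+1$ and $k+l<i+1<j+1$) no condition on $n$ is imposed and none would help: the facial forms along the shared edge are $by^j+cx^ky^l$ and $-n(jby^{j-1}+lcx^ky^{l-1})$, and their having no common zero in the torus is equivalent to $j\neq l$, which follows from unimodularity --- this is exactly what makes the paper's substitution $W\mapsto W'$ work. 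So your ``secondary but essential point'' needs a different (easy, but case-dependent) argument in the main case. Beyond that, the bulk of the proposition --- determining which of the four candidate exponents are actually vertices of $\Delta_W$ and evaluating the mixed volume in each configuration --- is precisely the content of the paper's proof and is left as a declared intention in yours; since the invariant being evaluated is the same, those computations would come out as stated.
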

\begin{proof}
The condition that $(k,l)$ is to the right of the other edge, together with the regularity of $\Gamma$ is equivalent to the identity $il+jk-ik=1$. We assume without loss of generality that $P$ is at the point $(0,0)$, and as always, we obtain $3$ equations, 
\begin{align*}
&\ell = y+m+nx,\\
& q=ax^i+by^j+cx^ky^l + O(t),\\ 
&W = aix^{i-1}+ckx^{k-1}y^l - n(bjy^{j-1} + clx^ky^{l-1}) + O(t)
\end{align*}
in variables $x$, $y$ and $m$. The number of bitangents tropicalizing to $\Lambda$ equals the number of intersection points of $q,\ell$ and $W$ with trivial valuation. Since $m$ only appears in the equation $\ell$, and we assume that $n$ is fixed by the tangency point $Q$, the number of solutions equals the intersection multiplicity of $q$ and $W$ (note that by the assumption that there are no tritangent lines, different choices of the point of tangency imply that $m$ is different as well).

We compute the intersection multiplicity using tropical intersection theory: if  $\Gamma$ and $\Trop(V(W))$ intersect transversally,  then  by \cite{OP}, the number of lifts equals the local tropical intersection number at the vertex. If they don't intersect transversally, we replace them with an equivalent system of equations that does. Denote $\Omega=\Trop(W)$, and $\Delta_\Gamma,\Delta_\Omega$ the polygons dual to the vertex at the origin of $\Gamma$ and $\Omega$ respectively.

We begin with the case $i=j=1$. Then $W=a-nb+kcx^{k-1}y^l-nlcx^ky^{l-1}$, and the constant term is non-zero by the assumption $n\neq \frac{a}{b}$. By  regularity of $\Gamma$, it follows that $k+l=2$. If neither $k$ or $l$ equals $0$,  then the rays of $\Gamma$ and $\Omega$ are
\[
\binom{1-l}{k},\binom{l}{1-k},\binom{-1}{-1}
\]
and
\[
-\binom{1-l}{k},-\binom{l}{1-k},-\binom{-1}{-1},
\]
respectively, and they intersect properly with multiplicity $2$ (see Figure \ref{fig-vertexvertex-ab1} on the left).  If either $k$ or $l$ is $0$, it is straightforward to check that there is a unique common solution for $q$ and $W$ with trivial valuation.

\begin{figure}
\begin{tikzpicture}[scale=.5]

\draw[purple] (0,0) to (-3,-3);
\draw[purple] (0,0) to (-3/2,-3);
\draw[purple] (0,0) to (2,3);
\node [left] at (-2,-1.9) {$\Gamma$};

\draw[teal] (0,0) to (3,3);
\draw[teal] (0,0) to (3/2,3);
\draw[teal] (0,0) to (-2,-3);
\node [right] at (2,2)  {$\Omega$};

%\begin{tikzpicture}[scale=.6]
  \begin{scope}[shift={(9,0)}]

\draw[purple] (0,0) to (-4,-1);
\draw[purple] (0,0) to (-3,-1);
\draw[purple] (0,0) to (3.5,1);
\node [right] at (-4,-0.25) {$\Gamma$};

\draw[teal] (0,0) to (1,1);
\draw[teal] (0,0) to (4,1);
\draw[teal] (0,0) to (2,0);
\draw[teal] (0,0) to (-3.5,-1);
\node [left] at (2,1.4) {$\Omega$};

\end{scope}

\end{tikzpicture} 
\caption{$\Trop(q)$ and $\Trop(W)$ for two different curves. On the left, $i=j=1,k=3,l=-1$, and on the right,  $i=1,j=3,k=-1,l=7$.} \label{fig-vertexvertex-ab1}

 \end{figure}
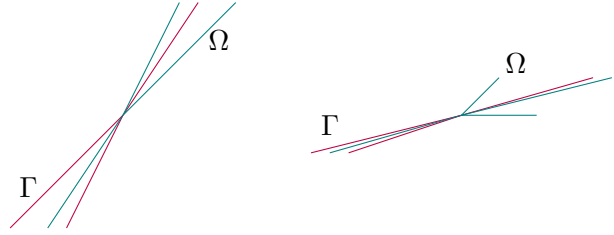

Next, assume that $(i,j)\neq(1,1)$, and none of $i,j,k,l$ is zero.  The polygon  $\Delta_\Omega$ has vertices $(i-1,0), (0,j-1), (k-1,l), (k,l-1)$. The value of $k+l$ determines the relative position of the vertices of $\Omega$: the vertex $(k,l-1)$ is to the right of the line spanned by $(i-1,0)$ and $(0,j-1)$ if and only if $k+l<j+1$, and similarly, $(k-1,l)$ is to the right of the line if and only if $k+l<i+1$. We treat each case individually.

Assume first that  $i+1<j+1<k+l$ (see Figure \ref{fig-vertexvertex-ab1}). %(e.g. $i=1,j=3,k=-1,l=7$). 
When $i>0$, regularity implies that $k<0$ and  $l>j$. The condition $k+l>j+1$ implies that the line spanned between $(k-1,l)$ and $(k,l-1)$ hits the $y$ axis above $j-1$. It follows that one of the rays of $\Gamma$ will be in the $(1,1)$ direction. When  $i<0$,
%(e.g. $i=-1, j=2, k=1, l=3$)
regularity implies $k>0, l>j$, and we similarly obtain a ray in the $(1,1)$ direction. %In this case it's pretty straightforward that the ray in the $(1,1)$ direction shows up, but what we need to worry about is the vertex $(0,j-1)$. This vertex will show up as long as the line between $(i-1,0)$ and $(0,j-1)$ passes under $(k,l-1)$. Again, that will follows from $k+l>j+1$.
In any case, the rays of $\Gamma$ and $\Omega$, ordered according to their direction counterclockwise, are
\begin{center}
\begin{tabular}{c c c c c c c c}
 $\Gamma$: & & $\binom{j-l}{k}$&& $\binom{-j}{-i}$&&&$\binom{l}{i-k}$ \\
 $\Omega$: & $\binom{1}{1}$& &$\binom{-l}{k-i}$&& $\binom{j-1}{i-1}$ $\binom{l-j}{-k}. $\\  
\end{tabular}
\end{center}
Locally at the vertex, $\Gamma$ and $\Omega$ intersect properly with multiplicity $l+k-i+1$. 

If $k+l=j+1$, % (e.g. $i=2,j=3,k=-1,l=5$) 
then the rays in counter clockwise order are
\begin{center}
\begin{tabular}{ c c c c c c c}
$\Gamma$: & $\binom{d}{i-k}$ & & $\binom{j-l}{k}$ & & $\binom{-j}{-i}$ \\ 
 $\Omega$: && $\binom{1}{1}$ & & $\binom{-l}{k-i}$ & & $\binom{l-1}{i-k-1}.$ \\  
\end{tabular}
\end{center}
%If $a>0 then  $c\leq 0, d\geq b$. 
Again, $\Gamma$ and $\Omega$ intersect transversally with multiplicity $j-i+2$.
%(when $i<0$, this can only happen when $j=k=1$), and the same  holds)

%
%\begin{figure}
%\begin{tikzpicture}[scale=.6]
%  
%
%\draw[purple] (0,0) to (-4,-1);
%\draw[purple] (0,0) to (-3,-1);
%\draw[purple] (0,0) to (3.5,1);
%\node [right] at (2,2) {$\Gamma$};
%
%\draw[teal] (0,0) to (1,1);
%\draw[teal] (0,0) to (4,1);
%\draw[teal] (0,0) to (2,0);
%\draw[teal] (0,0) to (-3.5,-1);
%\node [left] at (-2,-0) {$\Omega$};
%
%
%\end{tikzpicture} 
%\caption{$\Trop(q)$ and $\Trop(W)$ when $i=1,j=3,k=-1,l=7$.} \label{fig-vertexvertex-klBig}
%
% \end{figure}

If $i+1< k+l < j+1$ %(e.g. $i=3,j=7,k=1,l=5$, see Figure \ref{fig-vertexvertex}) 
then the tropicalizations $\Gamma$ and $\Omega$ overlap along the ray $\binom{j-l}{k}$. From the equation for $q$, we have $cx^ky^{l-1}=\frac{-ax^i-by^j}{y} + O(t)$. We plug that into $W$ and multiply by $y$, to obtain a new equation
\[
W' = aiyx^{i-1} + ckx^{k-1}y^{l+1} +nby^j(l-j) + nalx^i + O(t).
\]
Note $l-j\neq0$, since otherwise $j=k=l=1$, contradicting the fact that $k+l<j+1$.
%Also, from our assumptions, each monomial has difference exponents.).
Now, $\Gamma$ and $\Omega'$ intersect properly, and their rays in counter clockwise order are
\begin{center}
\begin{tabular}{ c c c c c c c c}
$\Gamma$: & & $\binom{j-l}{k}$&&&$\binom{-j}{-i}$&&$\binom{l}{i-k}$\ \\ 
 $\Omega'$: & $\binom{j}{i}$& &$\binom{l-j+1}{1-k}$ &$ \binom{-l}{k-i}$&& $\binom{-1}{-1}. $\\  
\end{tabular}
\end{center}
The curves intersect properly with local multiplicity $j-i+1$ at the vertex, as claimed. 

If $i+1=k+l<j+1$, then % (e.g. $i=2,j=3,k=1,l=2$), then %Then always $d\geq 0$, $c\leq a$. 
%$\Gamma$ and $\Omega$ have rays
%\[
%\binom{j-l}{k}\binom{l}{i-k},\binom{-j}{-i},
%\]
%\[
%%\binom{1}{1},\binom{b-d-1}{c-1},\binom{1-b}{1-a},\binom{d-1}{a-c-1},
%\binom{j-l}{k},\binom{l-1}{k-i+1},\binom{1-j}{1-i},
%\]
$\Gamma$ and $\Omega$ overlap along the ray $\binom{j-l}{k}$.
As usual, we plug in $cx^ky^{l-1}=\frac{-ax^i-by^j}{y}+O(t)$ to $W$ and multiply by $y$ to get  $W'$, whose tropicalization $\Omega'$ has rays
\[
%\binom{b-d}{c}, \binom{1}{1},\binom{b-d-1}{c-1},\binom{1-b}{1-a},   \binom{-b}{-a},\binom{d}{a-c},\binom{d-1}{a-c-1}.
\binom{j}{i},\binom{-1}{-1},\binom{1-j}{1-i}.
\]
Now $\Omega'$ and $\Gamma$ intersect properly with multiplicity $j-i+1$. 

If $k+l<i+1<j+1$, % (e.g. $i=2,j=3,k=3,l=-1$).  
we make the change of variables
 \[
 i'=k-i, j'=-l, k'=-i, l'=j-l,
 \]

\noindent which brings us to the case $j'+1\leq k'+l'$. As we have already seen,  the number of lifts is $l'+k'-i'+1 = j-l-i-k+i+1 = j-k-l+1$.

\begin{figure}
\begin{tikzpicture}[scale=.4]
\begin{scope}[shift={(-8,0)}]
\draw[teal] (0,0) to (4,2);
\draw[teal] (0,0) to (6,1);
\draw[teal] (0,0) to (-2,0);
\draw[teal] (0,0) to (-5,-2);
\node [left] at (-2,-0) {$\Omega$};
\end{scope}

\draw[purple] (0,0) to (4,2);
\draw[purple] (0,0) to (5,2);
\draw[purple] (0,0) to (-7/2,-3/2);
\node [right] at (2,2) {$\Gamma$};

\draw[teal] (0,0) to (7/2,3/2);
\draw[teal] (0,0) to (-2,-2);
\draw[teal] (0,0) to (-5,-2);
\draw[teal] (0,0) to (-2,0);
\node [left] at (-2,-0) {$\Omega'$};

\end{tikzpicture} 
\caption{$\Trop(W)$ on the left, and the intersection of $\Trop(q)$ and $\Trop(W')$ on the right, when $i=3,j=7,k=1,l=5$.} \label{fig-vertexvertex}

 \end{figure}
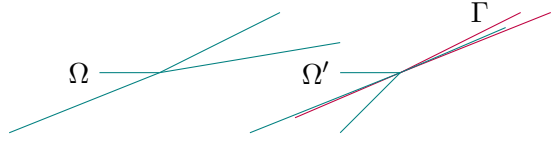

Finally, we deal with the case where at least one of $i,k,l$ is $0$. If $k=0$ then either $i=1, l=j+1$, and $j>1$; or $i=-1$ and $l=j-1\geq 0$. When
 $i=1$,  we have $q=ax+by^j+cy^{j+1}$ and $W=a-n(jby^{j-1}+(j+1)cy^j)$. In this case, the rays of $\Gamma$ around the origin are
\[
\binom{j-1}{1}, \binom{-1}{0}, \binom{-j}{-1},
\]
whereas $\Omega$ is just a horizontal line with multiplicity $j$,
and they overlap along the $\binom{-1}{0}$ direction. The stable intersection at the vertex  has multiplicity $j$. 
By \cite{OR}, there are $j$ intersection points that tropicalize to the segment where $\Omega$ and $\Gamma$ overlap. The $x$ coordinate of any solution that doesn't tropicalize to the vertex will have positive valuation. However, using the fact that $n\neq \frac{ac^{j-1}}{(-b)^j}$,  there are no such solutions, so the number lifts equals the stable intersection which is $j$. If $i=-1$,  then we similarly get $j+2$ solution. %(don't need a genericiy condition)
 
 If $i=0$, then $j=k=1$, and we distinguish between positive and negative $l$ (when $l=0$, a tropical line is not tangent at the point). If $l>0$, then $\Gamma$ and $\Omega$ intersect non-transversally. Once again, there are no common solutions when the valuation of $x$ is positive  (using $n\neq\frac{b^{l+1}}{c(-a)^l}$), and the number of lifts equals the multiplicity of the stable intersection $l+1$.
When $l<0$, we multiply everything by $y^{-l}$ (without changing the number of bitangents) and get 
 \[
 q' = ay{-l} + by^{-l+1} + cy.
 \]
By renaming  $i'=1, j'=-l, k'=0, l'=-l+1, a'=c,b'=a,c'=b$ we are in the  $k'=0, i'=1$ case, so there are $j'= -l$ lifts (using $n\neq \frac{a'c'^{j'-1}}{(-b')^{j'}} = \frac{cb^{-l-1}}{(-a)^{-l}}$).

If $l=0$, then $j=1$ and $k=i+1$. The only relevant case is where $i<0$ (otherwise $i=1$ which we already dealt with, or $i=0$ which means that a tropical line is not tangent at the point), which similarly to the other cases yields  $1-i$ lifts (using that assumption  $n\neq\frac{bc^{i-1}}{(-a)^i}$).

\end{proof}

%\subsection{Lifting tropical lines that intersect a tropical curve in one segment}\label{sec-starshaped}
\subsection{Lifting tropical lines tangent at two points of the same connected component} \label{sec-starshaped}
We now consider bitangent lines $(\Lambda,P,Q)$ such that  $P$ and $Q$ are in the same connected component of the intersection of $\Gamma$ with $\Lambda$. We still assume that $\Gamma$ is smooth and $P\neq Q$.
Since $P$ and $Q$ cannot lie in the interior of the same edge of the line, the intersection must contain a trivalent vertex $V$ of the curve, which coincides with the vertex of the line.
There are two cases to deal with. The first is when all the edges emanating from $V$ are aligned with the edges of the line. We refer to this case as ``star-shaped'' intersection. If the three edges of $\Gamma$ adjacent to $V$ are bounded, and the shortest of their length is unique, the bitangent lifts $4$ times (see Figure \ref{fig-starshaped} and Proposition \ref{prop-starshaped}). The other case is where the overlap is along a line segment, and one of the points of tangency is at the vertex. The bitangent doesn't lift in this case (see Figure \ref{fig-noliftforpointsinsegment} and Lemma \ref{lem-noliftforpointsinsegment}). 

For this subsection as before, we let $\Gamma=\Trop(C)$ and assume that $\Gamma$ is smooth.

%We now deal with cases of intersections of a tropical line $\Lambda$ with $\Gamma=\Trop(C)$ which are not covered by Theorem \ref{Thm:Lifting}, but which occur in the case of plane quartics. 
%
%These are the case where $\Lambda\cap \Gamma$ is a tropical line segment containing a $3$-valent vertex $V$, we call it the ''star-shaped'' intersection (see Figure \ref{fig-starshaped} and Propisition \ref{prop-starshaped}), and the case where $\Lambda\cap \Gamma$ is a (usual) line segment containing the vertex of $\Lambda$, with stable intersection of multiplicity $3$ at the vertex (see Figure \ref{fig-noliftforpointsinsegment} and Lemma \ref{lem-noliftforpointsinsegment}).

\begin{figure}
\input{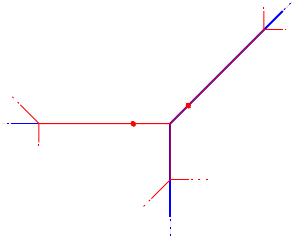_t}
\caption{``Star-shaped'' intersection.}
\label{fig-starshaped}
\end{figure}

\begin{proposition} \label{prop-starshaped} Let $(\Lambda,P,Q)$ be a tropical bitangent such that $P$ and $Q$ are contained in a star-shaped intersection between $\Lambda$ and $\Gamma$. Assume that the three edges 
of $\Gamma$ adjacent to the vertex $V$ are bounded, 
and that $\Gamma$ is generic in the sense that the shortest length of the three adjacent edges is unique.
Then $\Gamma$ lifts to four bitangents of $C$.
\end{proposition}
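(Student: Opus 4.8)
The plan is to resolve the shared vertex by a single tropical modification at $\Lambda$, pulling the two tangency points into two separate arms of the re-embedded curve, where each becomes an independent instance of the segment tangency of Case (3). First I would normalize the local data. Since $\Gamma$ is smooth, the vertex $V$ is dual to a unimodular triangle; after translating $V$ to the origin and dividing by a common monomial, the three terms of $q$ meeting at $V$ may be written as $a+bx+cy$ with $\val(a)=\val(b)=\val(c)=0$, and the three adjacent edges of $\Gamma$ point in the tropical-line directions $(-1,0)$, $(0,-1)$, $(1,1)$. Applying a coordinate change permuting these directions (Remark \ref{rem-coordinatechange}), I may assume the unique shortest edge is the diagonal one, so that by the local description in Subsection \ref{subsec-intersect} the two tangency points $P$ and $Q$ lie in the interiors of the horizontal and vertical edges, each at distance $(l-l_1)/2$ from $V$; the genericity hypothesis that $l_1$ is the unique shortest length guarantees these distances are strictly positive, so neither point sits at $V$ nor at the far endpoint of its edge.

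Next I would modify the plane along $\Lambda$, which is Case (II) of Subsection \ref{subsec-modification} with $L=\max\{X,Y,0\}$ and lift $\ell=y+nx+m$; this is precisely the $k=0$ situation anticipated in Remark \ref{rem-nonproper3valent}. The three arms of the star are carried into the three attached cells: the horizontal arm containing $P$ into $\sigma_4$, the vertical arm containing $Q$ into $\sigma_5$, and the diagonal arm (carrying no tangency) into $\sigma_6$. Reading the re-embedded curve through the projections, $\pi_{YZ}$ exhibits the tangency at $P$ on $\Trop(V(q_{yz}))$ and $\pi_{XZ}$ exhibits the tangency at $Q$ on $\Trop(V(q_{xz}))$; choosing the lift $\ell$ with the cancellation behaviour of Proposition \ref{prop-horizontalsegment} (rather than the generic modification), each projection becomes a proper segment intersection whose Newton subdivision is as in Figure \ref{NewtonFinal}. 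Because $P$ lies on the horizontal end of $\Lambda$ and $Q$ on the vertical end, the Wronskian equation at $P$ involves only $m$ while the equation at $Q$ involves only $m/n$, exactly as in the decoupling observation underlying Theorem \ref{Thm:Lifting}; since the two points lie on ends of distinct directions, the two local systems share no unknown once $m$ and $m/n$ are taken as the variables.

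I would then apply Lemma \ref{nonRegularIntersection} (through Proposition \ref{prop-horizontalsegment}) to each projection separately. The tangency at $P$ yields exactly two local solutions for $m$, and the tangency at $Q$ yields exactly two local solutions for $m/n$; each of the four resulting pairs determines a distinct $(m,n)$, so the decoupled initial system has $2\cdot2=4$ solutions. Finally, since the combined six equations split into two independent three-variable blocks, the residue of their Jacobian is block structured with determinant equal to the product of the two nonzero determinants computed in Lemma \ref{nonRegularIntersection}; hence Theorem \ref{Thm:IFT} lifts each of the four initial solutions uniquely to a Puiseux-series bitangent of $C$, and $\Lambda$ lifts to four bitangents.

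The step I expect to be the main obstacle is the simultaneous resolution inside the modification: one must check that a single lift $\ell$ can exhibit the required cancellation in both the $\pi_{XZ}$ and the $\pi_{YZ}$ projections at once, so that both arms are resolved into Case (3) pictures in genuinely different cells. Concretely, I would verify that the cancellation controlling $P$ is governed by the residue $\overline m$ while the one controlling $Q$ is governed by $\overline m/\overline n$, so that the two do not interfere and the freedom needed to realize the tangency at $Q$ survives after the tangency at $P$ is imposed. This independence is exactly what licenses treating the star-shaped component as if its two tangency points lay in different connected components, reducing the count to the product of the two Case (3) multiplicities.
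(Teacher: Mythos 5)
Your overall strategy is the paper's: modify along $\Lambda$, choose the lift $\ell$ so as to force non-generic cancellation, reduce each tangency point to the local picture of Lemma \ref{nonRegularIntersection}, and multiply the two local multiplicities $2\cdot 2=4$. Two differences are worth flagging. First, the normalization: the paper arranges the shortest (tangency-free) edge so that the two tangency points land in the cells $\sigma_4$ and $\sigma_6$, both of which are faithfully seen in the \emph{single} projection $\pi_{XZ}$ given by $q_{xz}=q(x,z-\alpha x-\beta)$; your choice (shortest edge diagonal, tangencies on the horizontal and vertical arms) forces you to work in two charts simultaneously, which is exactly what creates the ``main obstacle'' you identify. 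A permutation of coordinates as in Remark \ref{rem-coordinatechange} removes this obstacle for free. Second, your reduction of each arm to ``an instance of Case (3)'' is not literal: the stable intersection along the star-shaped component has multiplicity $4$, the tangency points sit at distance $(l-l_1)/2$ from $V$ rather than at the midpoints of the overlap segments, and Proposition \ref{prop-horizontalsegment} does not apply as stated. What is actually used is Lemma \ref{nonRegularIntersection} applied directly to the re-embedded picture.

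The genuine gap is the step you yourself flag, and the residue-level argument you offer ($\overline m$ governs $P$, $\overline m/\overline n$ governs $Q$, hence no interference) is not enough to close it. To make the two vertices of Figure \ref{NewtonFinal} appear in the projected Newton subdivision, the paper must cancel the coefficients of the entire row $x^{i},\dots,x^{i+j+1}$ down to valuation $\lambda_1$, and then push the two extreme coefficients (of $x^i$ and $x^{i+j+1}$) strictly past the thresholds $\tfrac12(\lambda_1+\lambda_2)$ and $\tfrac12(\lambda_1+\lambda_3)$ by choosing the coefficients of $\alpha=n$ and $\beta=m$ order by order, not just their residues. The intermediate coefficients have the form $(-1)^j\alpha^{k-1}\beta^{j-k}\bigl(\alpha\binom{j}{k}(b-\beta)+\beta\binom{j}{k-1}(a-\alpha)\bigr)$, so the two parameters do mix there, and one must check (as the paper does, via a case analysis on the position of the Newton-subdivision vertex $V_1$ carrying the $\lambda_1$-coefficient, together with the contributions it feeds into the row) that these intermediate valuations stay at exactly $\lambda_1$ while the extreme ones drop, and that the flanking columns $x^{i-1}$ and $x^{i+j+2}$ retain valuations $\lambda_2$ and $\lambda_3$. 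Your proposal asserts the resulting subdivision rather than constructing it; that construction is the substance of the paper's proof.
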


\begin{proof}
Assume that the three edges of $\Gamma$ adjacent to $V$ have lengths $\lambda_1,\lambda_2$ and $\lambda_3$ with $\lambda_1< \lambda_2\leq \lambda_3$. Without restriction, we let $V$ be at the point $(0,0)$.

%Assume first that $\lambda_1 < \lambda_2\leq \lambda_3$. 
In the dual Newton subdivision of $\Gamma$, there is a triangle $V^\vee$ dual to $V$ with vertices corresponding to the monomials $x^iy^{j+1}$, $x^iy^j$ and $x^{i+1}y^j$. Assume that the defining equation $q$ of $C$ has coefficients $1$, $b$ and $a$ for these monomials. We can assume without restriction that the valuation of these three coefficients is $0$.
The three facets of the triangle $V^\vee$ each form another triangle with vertices that must be of lattice distance $1$ from the facets by our smoothness assumption, and the assumption that the intersection is bounded.
Without restriction --- possibly after a linear change of coordinates --- we can assume that the vertex $P_1$ corresponding to the term with coefficient of valuation $\lambda_1$ forms a triangle with the horizontal facet, the term with $\lambda_2$ belongs to the vertical facet and $\lambda_3$ to the diagonal (see Figure \ref{fig-Newtstarshaped}).
By our assumption $\lambda_1 < \lambda_2\leq \lambda_3$, there are exactly three possible positions for $P_1$, as depicted in Figure \ref{fig-Newtstarshaped}. If  $P_1$ was on the left of case (1), it would form a triangle with the vertical facet instead of the point corresponding to the term whose coefficient has valuation $\lambda_2$. If it was on the right of case (3), it would win over $\lambda_3$. Figure \ref{fig-Newtstarshaped} also depicts the possible locations for the vertices corresponding to $\lambda_2$ and $\lambda_3$.

\begin{figure}
\input{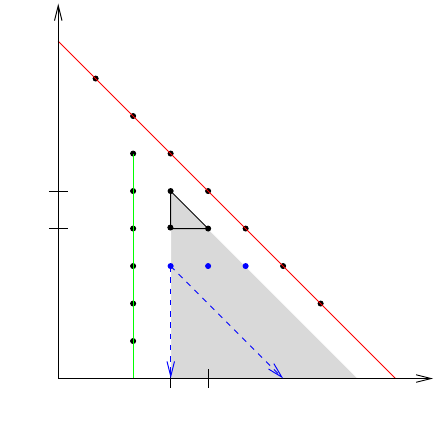_t}
\caption{The Newton subdivision of $\Gamma$ near the vertex $V$. The black triangle is $V^\vee$ dual to $V$. The blue points are possible locations of $P_1$. The point corresponding to $\lambda_2$ can be any lattice point on the green line segment (if $\lambda_2<\lambda_3$, in case of equality the line segment can be extended to reach  the red line), the point $\lambda_3$ can be any point on the red line segment. The picture also sketches the effect of the ``cone feeding'' for the projection $q_{xz}$.}\label{fig-Newtstarshaped}
\end{figure}

We modify the plane at $\Lambda$ as in case (II) of Subsection \ref{subsec-modification}, via a linear re-embedding of $\Trop(C)$ using the lift $\alpha x +y+\beta$ of $\Lambda$. We study the linear re-embedding using the projection $\pi_{xz}$ given by $q_{xz}=q(x,z-\alpha x -\beta)$. For now, consider $\alpha$ and $\beta$ as variables standing for Puiseux series of valuation $0$. We will specify how to pick them in the three different cases corresponding to the locations of $P_1$ as we proceed.

We start by computing the contributions of the three vertices of $V^\vee$ to the monomials $x^{i+k}$ for $0\geq k\geq j+1$  and to $x^{i+k}z$ for $k=0$ and $k=j$ in $q_{xz}$.
That is, we need to compute the coefficients of those monomials in
$$x^i(z-\alpha x -\beta)^{j+1}+b\cdot x^i (z-\alpha x -\beta)^j+a\cdot x^{i+1}(z-\alpha x -\beta)^j.$$

For $x^{i+k}$, we obtain

\begin{align*}
&b\cdot \binom{j}{k}(-1)^j\alpha^k\beta^{j-k}+\binom{j+1}{k}(-1)^{j+1}\alpha^k\beta^{j+1-k}+a\cdot \binom{j}{k-1}(-1)^j\alpha^{k-1}\beta^{j+1-k}\\
=&\begin{cases}(-1)^j\cdot \beta^j\cdot (b-\beta)&\mbox{ if } k=0 \\ (-1)^j\alpha^{k-1}\beta^{j-k}\cdot \Big(b\binom{j}{k}\alpha-\binom{j+1}{k}\alpha \beta+a\binom{j}{k-1}\beta\Big)&\mbox{ if }0<k<j+1\\ (-1)^j\alpha^j(a-\alpha)&\mbox{ if } k=j+1\end{cases}\\
=& \begin{cases}(-1)^j\cdot \beta^j\cdot (b-\beta)&\mbox{ if } k=0 \\ (-1)^j\alpha^{k-1}\beta^{j-k}\cdot \Big(\alpha\binom{j}{k}(b-\beta)+\beta\binom{j}{k-1}(a-\alpha)\Big)&\mbox{ if }0<k<j+1\\ (-1)^j\alpha^j(a-\alpha)&\mbox{ if } k=j+1.\end{cases}
\end{align*}

For $x^iz$, we obtain
$$(j+1)\cdot (-1)^j\cdot \beta^j+b\cdot j\cdot (-1)^{j-1}\cdot \beta^{j-1}=\beta^{j-1}\cdot (-1)^j\cdot (bj-(j+1)\beta).$$

For $x^{i+j}z$, we have
$$(j+1)\cdot (-1)^j\cdot \alpha^j+a\cdot j\cdot(-1)^{j-1}\cdot \alpha^{j-1}=\alpha^{j-1}\cdot (-1)^{j-1}\cdot (aj-(j+1)\alpha).$$

No matter where $P_1$ is located, we pick the $\mathbb{C}$-coefficients of the Puiseux series $\alpha$, from the initial (i.e.\ from the coefficient of $t^0$) to order $\lambda_1$, to be equal to the corresponding coefficients of $a$, and the same for $\beta$ and $b$.
It follows that the valuation of the coefficient of $x^{i+k}$ is at least $\lambda_1$ for each $k=0,\ldots,j+1$, whereas the valuation of the coefficient of $x^iz$ and $x^{i+j}z$ is $0$.
To pick the coefficients of $\alpha$ and $\beta$ of order $\lambda_1$ and higher, we have to distinguish cases depending on the location of $P_1$.

Assume first that $P_1$ is at $x^iy^{j-1}$ (case (1) in Figure \ref{fig-Newtstarshaped}).
Notice that the contribution $x^i$ gets from $P_1$ by ``cone feeding'' is $d\cdot t^{\lambda_1}(-1)^{j-1}\beta^{j-1}$, if $d\cdot t^{}\lambda_1$ is the coefficient of the term of $P_1$ in $q$, where $d\in K$ has valuation $0$.
Thus, after picking $\alpha$ and $\beta$ up to order $\lambda_1$, the lowest order term of the coefficient of $x^i$ is
$$(-1)^{j-1}\cdot \beta_0^{j-1}\cdot (-(b_{\lambda_1}-\beta_{\lambda_1})\cdot \beta_0)+d_0,$$
where we denote by $b_i$ the coefficient of $t^i$ in $b$ and accordingly for $\beta$. Using $b_0=\beta_0$ we can make the above disappear by picking $\beta_{\lambda_1}=b_{\lambda_1}-\frac{d_0}{b_0}$.
We let $\alpha$ and $a$ agree at order $\lambda_1$.
Notice that this choice implies that the valuation of the coefficients of $x^{i+1},\ldots,x^{i+j-1}$ is exactly $\lambda_1$.
Continuing like this, we can pick $\alpha$ and $\beta$ such that the valuation of the coefficient of $x^i$ is greater than $\frac{1}{2}(\lambda_1+\lambda_2)$, and the valuation of the coefficient of $x^{i+j+1}$ is greater than $\frac{1}{2}(\lambda_1+\lambda_3)$. 
%The valuation of the coefficients of $x^{i+1},\ldots,x^{i+j-1}$ is $\lambda_1$.
It follows from the smoothness that the valuation of the coefficient of $x^{i-1}$ is $\lambda_2$, and of $x^{i+j+2}$ is $\lambda_3$.
We conclude that the (relevant part of the) Newton subdivision (i.e.\ the local picture visible in the attached cell of the modification of $\RR^2$) of $\Trop(V(q_{xz}))$ is as depicted in Figure \ref{fig-Newtstarshaped2}. 

In particular, $\Trop(V(q_{xz}))$ has a unique liftable bitangent line $\Lambda'$  which locally at each tangency point satisfies the conditions of Lemma \ref{nonRegularIntersection}. Notice that it follows from our assumption $-\lambda_1>-\lambda_2\geq -\lambda_3$ that the two points of tangency are contained in the open interiors of the cells $\sigma_4$ resp.\ $\sigma_6$ of the modified plane.

\begin{figure}
\input{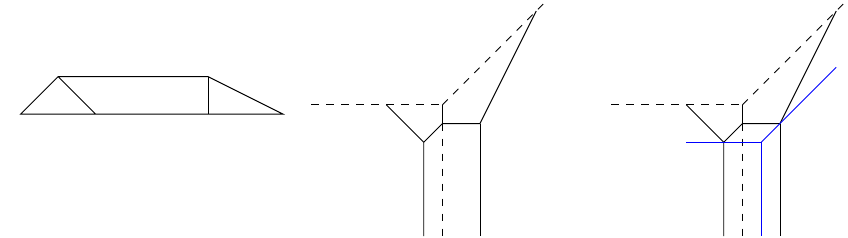_t}
\caption{The relevant part of the Newton subdivision of the projection $q_{xz}$ revealing the features of the re-embedded tropical curve in the cells $\sigma_4$ and $\sigma_6$ of the modified plane (the numbers denote the negative valuations of corresponding coefficients). In the middle, the part of $\Trop(q_{xz})$ in question; on the right, the unique liftable tropical bitangent line $\Lambda'$. }
\label{fig-Newtstarshaped2}
\end{figure}

Using Lemma \ref{nonRegularIntersection}, we can now, for each point of tangency, pick two choices for the initials of the coefficients of $\Lambda'$. Combining these four choices with the values we picked for the low order terms of $\alpha$ and $\beta$ in the modification process, we obtain exactly four lifts of $\Lambda$.

If $P_1$ corresponds to the monomial $x^{i+1}y^{j-1}$ (case (2) in Figure \ref{fig-Newtstarshaped}), we proceed analogously:
We start by picking $\alpha_0=a_0,\ldots,\alpha_{\lambda_1-1}=a_{\lambda_1-1}$ and $\beta_0=b_0,\ldots,\beta_{\lambda_1-1}=b_{\lambda_1-1}$.
The valuation of the coefficient of $x^{i}$ is $\lambda_2$ and so the vertex dual to $x^{i-1}$, $x^{i}z$ and $x^{i+1}$ (whose coefficient has valuation $\lambda_1$) already satisfy the requirements. The coefficient of $x^{i+j+1}$ may inherit a contribution of order $\lambda_2$, and since $\lambda_2$ may be less than $\lambda_3$, we then have to pick $\alpha$ at the corresponding orders accordingly to make the negative valuation drop below the average of the negative valuation of the coefficient of $x^{i+j}$ (which is $-\lambda_1$) and $x^{i+j+2}$ (which is $-\lambda_3$). In any case, we will end up with the same picture as in Figure \ref{fig-Newtstarshaped2}, and thus with exactly four lifts of our bitangent.

The case (3) of Figure \ref{fig-Newtstarshaped}  ($P_1$ corresponding to $x^{i+2}y^{j-1}$) is symmetric to case (1) and works analogously. 

\end{proof}

\begin{remark} The modification technique used in the proof of Proposition \ref{prop-starshaped} can be adapted to the case where one of the edges of $\Gamma$ adjacent to $V$ is unbounded. In this case, the local intersection multiplicity is not $4$ but $3$, and there is only one tangency point $P$ near $V$. The other tangency point $Q$ has to be on a ray of $\ell$ which aligns with one of the bounded edges $e$ at $V$. Using the Genericity assumption \ref{genass} that the initial lift of $Q$ does not equal the initial lift at $e$ (i.e.\ the quotient of the initials of the two coefficients corresponding to $e$ in the defining equation for $\Gamma$), we conclude that $P$ must be the midpoint of the other bounded edge at $V$, and the local lifting multiplicity for $\Lambda$ is $m(C,\Lambda,P)=2$.

 %As genericity requirement, we need to assume that the two bounded edges have different lengths. In this case, the overall intersection multiplicity is not $4$ but $3$, and we can pick lifts for at most one point of tangency (which will involve only one of the two coefficients of a lift $\ell$ of $\Lambda$). This point of tangency can either be on the longer bounded edge adjacent to $V$, or on the end of $\Gamma$ adjacent to $V$. We have two choices for the lift, as we apply Lemma \ref{nonRegularIntersection} only once. This situation cannot occur for quartics however, since a line meets a quartic in at most $4$ points. Also for the case of two unbounded edges, we can use a modification as above. Let the adjacent bounded edge be of length $\lambda_1$ as above. Depending on where of the three positions in the Newton polytope the monomial with the coefficient of valuation $\lambda_1$ is, we obtain a point of tangency with two lifts in one of the three cells attached below the three edges. 
\end{remark}

\begin{figure}
\input{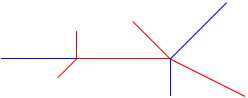_t}
\caption{The stable intersection of $\Lambda$ and $\Gamma$ at the vertex $P$ of $\Lambda$ is $3$, the stable intersection at $Q$ is $1$.}
\label{fig-noliftforpointsinsegment}
\end{figure}

\begin{lemma}\label{lem-noliftforpointsinsegment}
Suppose that a connected component of $\Gamma\cap\Lambda$ is a line segment containing points $P$ and $Q$ (see Figure \ref{fig-noliftforpointsinsegment}). Then $(\Lambda,P,Q)$ doesn't lift to a bitangent of $C$. 
% $\Lambda\cap \Gamma$ is a (usual) line segment containing the vertex $P$ of $\Lambda$ such that the stable intersection at the vertex is at least $3$ (see Figure \ref{fig-noliftforpointsinsegment}).
\end{lemma}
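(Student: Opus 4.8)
The plan is to show that, already at the level of residues, no algebraic line $V(\ell)$ can pass through lifts of both $P$ and $Q$ that respect the prescribed valuations; in particular no bitangent can, so the tangency (Wronskian) conditions will not even be needed. First I would normalize the configuration via a monomial coordinate change as in Remark \ref{rem-coordinatechange}. After translating and rescaling I may assume that the edge $e$ of $\Gamma$ carrying the segment is horizontal at height $0$, that its two dual monomials are $x^iy^j$ and $x^iy^{j+1}$ with coefficients of valuation $0$ whose ratio is $a$ (so the corresponding part of $q$ is $c\cdot x^iy^j(y+a)$ with $\val a=0$), and that the vertex $P$ of $\Lambda$ sits at the origin. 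With the vertex at the origin, every lift of $\Lambda$ has the form $\ell=y+m+nx$ with $\val m=\val n=0$, and the second tangency point is $Q=(-q,0)$ with $q>0$, lying in the interior of $e$ along the horizontal end of $\Lambda$, so that $\val(\tilde Q_x)=q>0$ while $\val(\tilde P_x)=0$.

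Next, assume for contradiction that $(\Lambda,P,Q)$ lifts, with tangency points $\tilde P,\tilde Q\in C\cap V(\ell)$ satisfying $\Trop(\tilde P)=P$ and $\Trop(\tilde Q)=Q$. Since $\Gamma$ is smooth, $e$ has weight one and the part of $C$ over the interior of $e$ is a single branch; as both $P$ and $Q$ lie in the interior of $e$, the two monomials dual to $e$ strictly dominate $q$ at each point. Evaluating $q(\tilde P)=q(\tilde Q)=0$ to leading order then factors as $\tilde P_x^{\,i}\tilde P_y^{\,j}(c_{i,j}+c_{i,j+1}\tilde P_y)=0$ (and similarly at $\tilde Q$), and since the monomial prefactors are units this yields $\overline{\tilde P_y}=-\overline a=\overline{\tilde Q_y}$.

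I would then read off $\overline m$ from the two points and compare. At $\tilde Q$, because $\val(\tilde Q_x)=q>0$ and $\val n=0$, the term $n\tilde Q_x$ has positive valuation, so $\ell(\tilde Q)=0$ forces $\overline m=-\overline{\tilde Q_y}=\overline a$. At $\tilde P$, however, $\val(\tilde P_x)=0$, so $n\tilde P_x$ has valuation $0$ with residue $\overline n\,\overline{\tilde P_x}\neq 0$ (using $\overline n\neq0$ since $\val n=0$, and $\overline{\tilde P_x}\neq0$ since $\val(\tilde P_x)=0$); hence $\ell(\tilde P)=0$ forces $\overline m=-\overline{\tilde P_y}-\overline n\,\overline{\tilde P_x}=\overline a-\overline n\,\overline{\tilde P_x}\neq\overline a$. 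These two values of $\overline m$ are incompatible, giving the contradiction. Conceptually, the point $Q$ pins the constant coefficient to the initial lift $\overline a$, exactly as in Proposition \ref{prop-horizontalsegment}, while the fact that the \emph{tilted} line ($\val n=0$) is tangent at the vertex $P$, where $\val(\tilde P_x)=0$, shifts the $y$-residue off the branch value $-\overline a$; this is the same flavor of obstruction as the ``same end of $\Lambda$'' conflict in Theorem \ref{Thm:Lifting}.

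The only step needing genuine care is the leading-term bookkeeping: one must verify that at each of $\tilde P$ and $\tilde Q$ the two monomials dual to $e$ strictly dominate all other terms of $q$, which is precisely the statement that $P$ and $Q$ lie in the relative interior of $e$, and one must check that the reduction to a horizontal edge with $\ell=y+m+nx$ is harmless when the overlap runs along a non-horizontal end of $\Lambda$ — this follows from the monomial coordinate changes of Remark \ref{rem-coordinatechange}. Once this domination is established, the residue computations, and hence the contradiction, are immediate.
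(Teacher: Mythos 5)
There is a genuine gap, and it sits exactly where you flagged ``the only step needing genuine care'': the claim that both $P$ and $Q$ lie in the relative interior of the edge $e$, so that only the two monomials dual to $e$ dominate $q$ at $\tilde P$. In the configuration of this lemma, $P$ is the vertex of $\Lambda$, and --- as explained at the start of Subsection \ref{sec-starshaped}, and as forced by the stable intersection multiplicity $3$ at $P$ (cf.\ item (3) of Subsection \ref{subsec-intersect}) --- this vertex coincides with a trivalent vertex of $\Gamma$, i.e.\ with an \emph{endpoint} of $e$, not an interior point of $e$. Hence at $\tilde P$ \emph{three} monomials of $q$ achieve the minimal valuation: after normalization $q=a+by+cxy^k+O(t)$ with $|k|\geq 2$ and $\val(a)=\val(b)=\val(c)=0$, and the leading-order incidence equation at $\tilde P$ is $\overline a+\overline b\,\overline{\tilde P_y}+\overline c\,\overline{\tilde P_x}\,\overline{\tilde P_y}^{\,k}=0$, which does \emph{not} force $\overline{\tilde P_y}=-\overline a/\overline b$. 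Your contradiction therefore evaporates: with $\overline m=\overline a/\overline b$ pinned down by $Q$ (that half of your argument is correct and matches Proposition \ref{prop-horizontalsegment}), the system consisting only of the two incidence residue equations $\overline{\tilde P_y}+\overline m+\overline n\,\overline{\tilde P_x}=0$ and $\overline a+\overline b\,\overline{\tilde P_y}+\overline c\,\overline{\tilde P_x}\,\overline{\tilde P_y}^{\,k}=0$ has solutions with $\overline{\tilde P_x},\overline{\tilde P_y}\neq 0$ for essentially any $\overline n\neq 0$. So lines passing through lifts of both $P$ and $Q$ with the prescribed valuations do exist, and the tangency (Wronskian) condition cannot be dispensed with.

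The paper's proof keeps your step at $Q$ but then uses the Wronskian at $P$: eliminating via the line equation ($\overline n\overline b\overline{\tilde P_x}=-\overline b\overline{\tilde P_y}-\overline a$) and the Wronskian ($\overline c\,\overline{\tilde P_x}\overline{\tilde P_y}^{\,k}=\overline n\overline{\tilde P_x}(\overline b+k\overline c\,\overline{\tilde P_x}\overline{\tilde P_y}^{\,k-1})$) turns the curve equation into $k\,\overline c\,\overline{\tilde P_x}\,\overline{\tilde P_y}^{\,k-1}\bigl(\overline{\tilde P_y}+\overline a/\overline b\bigr)=0$, forcing $\overline{\tilde P_y}=-\overline a/\overline b$; only then does the line equation yield $\overline n\,\overline{\tilde P_x}=0$, the contradiction. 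To repair your argument you must reinstate the tangency condition at the vertex point and carry out this three-equation computation.
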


\begin{proof}
Since $P$ and $Q$ are in the same connected component, the stable intersection multiplicity between $\Gamma$ and $\Lambda$ along the component must be at least $4$.
As in the discussion at the beginning of the subsection, we may assume that one of the points, $P$ is at the vertex of the line. Further, we may assume that $Q$ is  the midpoint of the segment, since otherwise we already know that the bitangent doesn't lift.  
Assume that the vertex is at the origin. By symmetry we can assume that the overlapping edge is horizontal. By smoothness, the curve $q$ has the form $a+by+cxy^k + O(t)$ for some $|k|\geq 2$. 

We treat the case $k\geq2$. The case $k\leq-2$ follows similarly. 
Write $q = a+by+cxy^k +O(t)$. Let $\ell=y+m+nx$. 
In order for the  tangency point at the middle of the horizontal edge to lift, it is necessary that $\overline{m}$ equals the initial lift given in Definition \ref{def-initiallift}, namely\ $\overline{m}= \frac{\overline{a}}{\overline{b}}$ (see case (3) of Theorem \ref{Thm:Lifting}, resp.\ Proposition \ref{prop-horizontalsegment}). Considering the first order terms of the equations of the line, the curve and the Wronskian for the tangency point at the vertex (see Section \ref{sec-wronskian}), we get  
\[
\overline{y}+\frac{\overline{a}}{\overline{b}} + \overline{n}\overline{x}=0,\;\;
\overline{a}+\overline{by}+\overline{c}\overline{xy^k}=0,
\;\;
\overline{n}(\overline{b}+k\overline{cxy^{k-1}})-\overline{cy^k}=0.
\]
The first equation is equivalent to $\overline{nbx} = - \overline{by} -\overline{a}$, and the third is equivalent to $\overline{cxy^k}= \overline{nx(b+kcxy^{k-1})}$. Plugging both of these into the second equation, we get $\overline{(\frac{a}{b}+y)kcxy^{k-1}}=0$, so $\overline{y}=-\overline{\frac{a}{b}}$. But then the first equation becomes $\overline{nx}=0$, a contradiction. 

%After clearing denominators we can write $q = ay^2+by^3 +cx +O(t)$. As in the previous case, the tangency point in the middle of the edge yields $\overline{m} = \frac{\overline{a}}{\overline{b}}$.
%Considering the first order terms for the tangency point at the vertex, we get the three equations
%\[
%\overline{y}+\frac{\overline{a}}{\overline{b}}+\overline{nx}=0,
%\;\;
%\overline{ay^2}+\overline{by^3}+\overline{c}\overline{x}=0,
%\;\;
%\overline{n}(\overline{2ay}+\overline{3by^2})-\overline{c}=0.
%\]
%From the first equation we see that $\overline{a}+\overline{yb} = -\overline{nbx}$. Plugging this into the second equation we get $\overline{y^2n}=\frac{\overline{c}}{\overline{b}}$. Plugging this into the third equation, we get $\overline{ny}=-\frac{\overline{c}}{\overline{a}}$. Dividing $\overline{y^2n}$ by $\overline{yn}$ we obtain $\overline{y} = -\frac{\overline{a}}{\overline{b}}$. Plugging $\overline{y}$ into the first equation now yields $-\overline{nx} = 0$, which is a contradiction. 
\end{proof}

\section{The $28=7\times 4$ tropical bitangents to a quartic}\label{sec-quartics}
This section is devoted to the proof of the following theorem:

\begin{theorem}\label{thm-quartics}
Let $\Gamma=\Trop(C)$ be a smooth tropicalization of a generic quartic curve (in the sense of the assumptions used in Theorem \ref{Thm:Lifting}, \ref{genass}, and Proposition \ref{prop-starshaped}). Then in each equivalence class of tropical bitangents, only finitely many representatives lift, and the sum of the multiplicities of those lifts is $4$.
%More precisely, there are either $4$ representatives which each lifts once, or $2$ which lift twice, or $1$ which lifts twice and $2$ which each lifts once, or $1$ which lifts four times.
Moreover, every partition of $4$ except $3+1$ is realizable as the lifting multiplicities of representatives of a bitangency class.
\end{theorem}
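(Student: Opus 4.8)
The plan is to reduce the global assertion to the local analysis carried out in Section \ref{sec-lifting} and then to run a finite combinatorial check over the bitangent classes of $\Gamma$. First I would settle finiteness. By Proposition \ref{prop-transverse} a transversal interior-edge intersection never contributes a lift, and by Theorem \ref{Thm:Lifting} a representative whose two tangency points lie in the interior of a common edge of $\Lambda$ does not lift. Consequently every liftable representative $(\Lambda,P,Q)$ has both tangency points in a \emph{special position}: a vertex of $\Gamma$, the vertex of $\Lambda$, the midpoint of a segment of $\Gamma\cap\Lambda$, or a star-shaped vertex. Since a bitangent class is a connected family swept out by sliding the vertex of $\Lambda$, these positions occur only at discrete values of the sliding parameter, so within each class only finitely many representatives lift.

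Next I would assemble the multiplicities. For a representative whose tangency points lie in different connected components, Theorem \ref{Thm:Lifting} gives the lifting multiplicity as the product $m(C,\Lambda,P)\cdot m(C,\Lambda,Q)$, and each local factor is read off from the case to which the point belongs: Table \ref{table1} together with Propositions \ref{prop:properGeneral}, \ref{prop-horizontalsegment}, \ref{prop-vertexoflambda} and \ref{prop-vertexofboth} for cases (1)--(5). For a representative whose tangency points lie in a single component, Proposition \ref{prop-starshaped} gives multiplicity $4$ in the star-shaped case, while Lemma \ref{lem-noliftforpointsinsegment} shows the segment case does not lift. The decisive feature, established throughout Section \ref{sec-lifting}, is that all of these numbers depend only on the local combinatorics at the tangency points, so the total multiplicity attached to a class is computable purely from the local shapes occurring along the family.

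The core of the argument is then to enumerate, for a smooth tropical quartic, the combinatorial shapes of its seven bitangent classes (in the sense of \cite{BLMPR}), and for each shape to trace how the tangency pair $(P,Q)$ migrates as the vertex of $\Lambda$ is slid across the class. For each class I would record the special positions where a lift occurs, evaluate the local multiplicities there using the dictionary above, and check that they sum to $4$. Because the Newton polygon of a quartic is the size-$4$ triangle, the local configurations at the tangency points are severely constrained, and the only partitions that arise are $4$, $2+2$, $2+1+1$ and $1+1+1+1$. To finish the \emph{moreover}-statement I would exhibit one smooth tropical quartic, together with an explicit defining polynomial as in \eqref{eq-exq}, realizing each of these four partitions; for instance the class of Figure \ref{fig-bigquartic}, whose two liftable representatives with vertices $2a$ and $2b$ each lift twice, realizes $2+2$.

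The main obstacle is the completeness and bookkeeping of this case analysis: one must ensure that the list of bitangent-class shapes is exhaustive across all unimodular triangulations of the size-$4$ triangle, that for each shape every liftable representative (and no spurious one) has been located as the vertex slides, and that the same-component cases (star-shaped versus segment) are correctly separated from the different-component cases of Theorem \ref{Thm:Lifting}. Ruling out the partition $3+1$ is the most delicate point: although Propositions \ref{prop-vertexoflambda} and \ref{prop-vertexofboth} permit a local multiplicity of $3$ in general, one must verify that in the restricted geometry of a smooth quartic such a value never pairs with a single additional simple lift inside one class, so that $3+1$ is genuinely never realized.
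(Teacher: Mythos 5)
Your opening moves---reducing to the local multiplicities of Section \ref{sec-lifting} and arguing finiteness via special positions of the tangency points---are consistent with the paper, but the heart of the theorem is exactly what you defer: verifying that the multiplicities in every class sum to $4$, and that $3+1$ never occurs. You propose to ``enumerate the combinatorial shapes of the seven bitangent classes'' over all unimodular triangulations of the size-$4$ triangle and ``check'' each one, and you then name this enumeration (and the exclusion of $3+1$) as the main obstacle without resolving it. That is a plan, not a proof, and it is also not the mechanism the paper uses. The paper avoids a global enumeration of class shapes by exploiting the product formula $m(C,\Lambda,P)\cdot m(C,\Lambda,Q)$ of Theorem \ref{Thm:Lifting} together with a one-parameter deformation argument: fix one tangency point $P$, slide the vertex of $\Lambda$ so that the second tangency point moves along a bounded subfamily, and show that the local lifting multiplicities at the second point, summed over the boundary representatives of that subfamily, always equal $2$. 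Doing the same on the other side yields $2\times 2=4$, and the partition $3+1$ is excluded because no representative of a smooth quartic ever acquires total multiplicity $3$ (local multiplicities at a tangency point of a quartic are only ever $0$, $1$, or $2$, apart from the star-shaped case which gives $4$ outright). Establishing the ``$2$ on each side'' claim is where the real work lies, and it is not purely a table lookup: one must, for instance, exclude the edge direction $(1,3)$ at a vertex of $\Lambda$ by a Newton-polygon argument (Figure \ref{fig-Bcase13}), and reduce the vertex-meets-vertex configurations to specific instances of Proposition \ref{prop-vertexofboth} after coordinate changes.

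Two further gaps. First, the case of a \emph{proper} intersection of multiplicity $4$ along an edge of slope $(4,1)$ is not covered by your dictionary of local multiplicities at all: the paper handles it by an explicit saturation computation in \textsc{Singular} (showing exactly one lift with two distinct tangency points over the same tropical point, and no hyperflex), and this is where the genericity hypotheses on hyperflexes and tritangents enter. Your proposal has no mechanism for this configuration. Second, your finiteness argument is slightly too quick: a midpoint-of-segment tangency can persist over an interval of the sliding parameter (the overlap with a fixed edge of $\Gamma$ does not move as the vertex slides along that edge's direction), so finiteness comes from the fact that the \emph{other} tangency point's condition discretizes the parameter, together with the genericity assumption \ref{genass} when both points lie on the same end of $\Lambda$. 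Your claimed list of realizable partitions is correct, but as presented it is an assertion rather than a consequence of anything you have established.
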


\begin{proof}
Fix a smooth tropicalized quartic $\Gamma$ satisfying the genericity condition, and let $P+Q$ be a theta characteristic.   By \cite{BLMPR}, there is always at least one bitangent $\Lambda$ between $P$ and $Q$.  If one of the connected components of the intersection is a bounded segment, then one of the points will be in the midpoint of that segment.
Note that we can rule out any intersection of multiplicity $3$ because then the line will not be bitangent.

Let us first deal with the case where the stable intersection along one connected component has multiplicity $4$.
Any non-proper such intersection must contain the vertex of $\Lambda$, and so it is symmetric to one of the cases considered in Proposition \ref{prop-starshaped} and Lemma \ref{lem-noliftforpointsinsegment}. In Proposition \ref{prop-starshaped}, the line $\Lambda$ is easily seen to be the unique element in its equivalence class, and it lifts with multiplicity $4$.
In Lemma \ref{lem-noliftforpointsinsegment}, $\Lambda$ itself does not lift, but as we see in case (a3) below there are two other representatives of its equivalence class that each lifts with multiplicity $2$.

A proper intersection of multiplicity $4$ can (up to symmetry) only occur on an edge of slope $(4,1)$. A line intersecting the open interior of such an edge cannot lift to a bitangent by Theorem \ref{Thm:Lifting}. Assume the vertex of $\Lambda$ intersects a vertex of $\Gamma$ adjacent to an edge of slope $(4,1)$ at $P=(0,0)$. By smoothness, $P$ has to be adjacent to a horizontal edge of $\Gamma$. That is, its dual polygon can be assumed to have vertices $(1,0)$, $(0,4)$ and $(1,1)$, or $(1,0)$, $(0,4)$ and $(0,3)$.
There must be either two different tangency points tropicalizing to $P$ or one point that admits a hyperflex, i.e.\ a line intersecting $C$ at one point with multiplicity $4$.
Let us first assume there are two different tangency points, and that the polygon is the first choice above. That is, there are points $(x,y)$ and $(r,s)$ whose initials solve the following system of equations:
\[
\overline{ax}+\overline{by^4}+\overline{cxy}= 0, \;\; \overline{y}+\overline{m}+\overline{nx} = 0,\;\;  (4\overline{by^3}+\overline{cx})\cdot \overline{n}-(\overline{a}+\overline{cy}) =0,
\]
\[
\overline{ar}+\overline{bs^4}+\overline{crs}= 0, \;\; \overline{s}+\overline{m}+\overline{nr} = 0,\;\;  (4\overline{bs^3}+\overline{cr})\cdot \overline{n}-(\overline{a}+\overline{cs}) =0,
\]
as in Section \ref{sec-wronskian}.
We use \textsc{Singular} \cite{DGPS} to compute the saturation of the ideal generated by these polynomials by the ideal generated by $x-r$ and $y-s$. In this way, we impose that the two tangency points lifting $P$ are distinct.
The saturation of the ideal is generated by
\[
-4\overline{a^2bn}+\overline{c^3},\;-\overline{cm}-\overline{a},\;
\overline{c^4r}+4\overline{a^2bcs}-4\overline{a^3b},\;\overline{y}+\overline{s},\;\overline{c^4x}-4\overline{a^2bcs}-4\overline{a^3b},\;-\overline{c^2s^2}+2\overline{a^2}.
\]
It follows that there is exactly one lift of $\Lambda$ to a bitangent of $C$. The computation for the other possible polygon is analogous and also leads to one lift. 
A similar computation with \textsc{Singular} shows that $\Lambda$ does not lift to a hyperflex.

Assume now that $\Lambda$ has its vertex at the point dual to the polygon with vertices $(1,0)$, $(0,4)$ and $(0,3)$. We can move the vertex of the tropical line along the edge of slope $(4,1)$ until we obtain $\Lambda'$ whose vertex is at the point dual to the polygon with vertices $(1,0)$, $(0,4)$ and $(1,1)$. We vary the tropical line within its equivalence class. We can continue moving the vertex of the tropical line along the adjacent horizontal bounded edge (staying within the equivalence class of the bitangent), until we obtain $\Lambda''$ whose diagonal end hits another vertex (see Figure \ref{fig-intfour}). By smoothness of $\Gamma$, this is the only way we can continue to vary the tropical bitangent within its equivalence class. The bitangent $\Lambda''$ has two tropically different tangency points: one in the middle of the horizontal line segment with local lifting multiplicity two according to Theorem \ref{Thm:Lifting}, and one at a vertex of $\Gamma$ with local lifting multiplicity $1$. It follows that $\Lambda''$ lifts with multiplicity $2$. Obviously, we leave the equivalence class if we continue moving the vertex of the tropical bitangent in any other way.
Thus, for this case we have two members of the equivalence class that lift once and one member that lifts twice.

\begin{figure}
\input{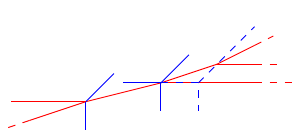_t}
\caption{An equivalence class for which three members lift nontrivially.}
\label{fig-intfour}
\end{figure}

Let us now assume as in Theorem \ref{Thm:Lifting} that $P$ and $Q$ are contained in different connected components of $\Lambda \cap \Gamma$.
We begin by considering the case where $P$ and $Q$ are in the interior of different ends of the line.
Without restriction, we assume that $P$ is on the horizontal end of $\Lambda$, and $Q$ on the diagonal.
We fix $P$ and move $\Lambda$ in a one-dimensional family by moving its vertex on a horizontal line, studying the different possibilities for the second tangency point $Q$.
Possible locations of $Q$ with respect to $\Gamma$ are:
\begin{enumerate}
\item[(a)] $Q$ can be at a vertex $v$ of $\Gamma$.
\item[(b)] $Q$ can be in the interior of an edge of $\Gamma$.
\end{enumerate}
In the second case, there are two possibilities to study:
\begin{enumerate}
\item[(b1)] Either the edge of $\Gamma$ containing $Q$ meets the diagonal end of $\Lambda$ transversally,
\item [(b2)] or $\Gamma$ and $\Lambda$ intersect in a segment containing $Q$.
\end{enumerate}
In case (a), the local lifting multiplicity of $\Lambda$ at $Q$ is $1$ by Theorem \ref{Thm:Lifting}. 
The vertex $v$ is adjacent to three edges, of which exactly one, say $e$, meets the diagonal end of $\Lambda$ with multiplicity $2$. It has to be a bounded edge. The one-dimensional family of tropical lines we obtain by moving $\Lambda$ (fixing $P$) has a bounded subfamily where the intersection of the diagonal end is along $e$. The two boundary points of the subfamily correspond to $\Lambda$ and
a tropical line $\Lambda'$ satisfying one of the following:
\begin{enumerate}
\item[(a1)] The diagonal end of $\Lambda'$ intersects the other vertex $Q'$ of $e$ (see Figure \ref{fig-combinations} (a1)); in this case the local lifting multiplicity of $\Lambda'$ at $Q'$ is $1$ as before by Theorem \ref{Thm:Lifting}.
\item[(a2)] The vertex $Q'$ of $\Lambda'$ intersects $e$ (see Figure \ref{fig-combinations} (a2)). Using Theorem \ref{Thm:Lifting}, we can deduce that the local lifting multipliticy of $\Lambda'$ at $Q'$ is $|\lambda|$ if $(\mu,\lambda)$ is the direction of $e$. Since $e$ is an edge of a quartic, and since it meets the diagonal end of $\Lambda$ with multiplicity $2$,  $(\mu,\lambda)$ can be $(-3,-1)$, $(1,-1)$ or $(1,3)$. 
We want to exclude the case $(\mu,\lambda)=(1,3)$. In the dual Newton subdivision, there are only three possible positions for the dual of an edge of direction $(1,3)$ (see Figure \ref{fig-Bcase13}, left).
For two of these positions (drawn as green lines in the picture), it is immediate to see that a horizontal end of $\Lambda'$ starting at a point of the edge $e$ cannot have any further intersection with $\Gamma$, so this position is not possible in our situation. For the third position (drawn as red line in the picture), we would have a bounded connected component of $\mathbb{R}^2\setminus\Gamma$ through which the horizontal end of $\Lambda'$ would pass. It is easy to see checking the possible directions of edges bounding this component that we could not have a further tangency point on the horizontal end of $\Gamma$ (see Figure \ref{fig-Bcase13}, right). Hence also this position is not possible in our situation and we have now excluded the case that $(\mu,\lambda)=(1,3)$. Thus, in any case, the local lifting multiplicity of $\Lambda'$ at $Q'$ is $|\lambda|=1$.  

\begin{figure}
\input{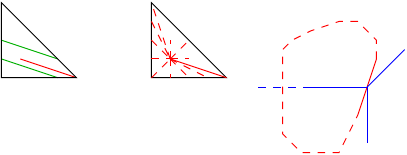_t}
\caption{Excluding the direction $(1,3)$ for an edge of a quartic meeting a vertex of a bitangent which has another intersection of multiplicity $2$ on its horizontal end.}
\label{fig-Bcase13}
\end{figure}

\item[(a3)] The vertex $Q'$ of $\Lambda'$ intersects the other vertex of $e$ (see Figure \ref{fig-combinations} (a3)). Let us study possible dual polygons of the vertex of $\Gamma$ meeting the vertex of $\Lambda'$: as above, the edge $e$ must have direction $(-3,-1)$, $(1,-1)$ or $(1,3)$. The adjacent vertex has to be smooth. The local intersection multiplicity at the vertex either equals three if it is adjacent to the horizontal edge containing the other tangency point (as in Lemma \ref{lem-noliftforpointsinsegment}, see Figure \ref{fig-noliftforpointsinsegment}), or it cannot be bigger than two.
In the first case, we can continue moving the vertex of $\Lambda'$ to the right until we obtain $\Lambda''$ whose vertical end hits another vertex $Q''$ of $\Gamma$ as in case (a1). The local lifting multiplicity of $\Lambda''$ at $Q''$ is $1$.
In the second case, the dual polygon must be (a shift of) the triangle with vertices $(0,0)$, $(1,0)$ and $(1,1)$. To compute the local lifting multiplicity, we can use Proposition \ref{prop-vertexofboth} after a suitable coordinate change, resp.\ a direct computation. After a coordinate change, the curve has rays $\binom{-1}{0}\binom{-1}{-1},\binom{2}{1}$, which is the case $i=1,j=1,k=0,l=2$ of Proposition \ref{prop-vertexofboth}. It follows that the local lifting multiplicity of $\Lambda'$ at $Q'$ is $1$.
%\hannah{The following computation can probably be deduced from Case (5), only after coordinate changes though, since here $m$ is fixed while in case (5) n is fixed. Should we try to conclude this from Case (5)? }
%\yoav{Yes. After a change of variable, the curve has rays $\binom{-1}{0}\binom{-1}{-1},\binom{2}{1}$, so $i=1,j=1,k=0,l=2$. It follows that the number of lifts is $1$, which is consistent with the other computation!}
%We compute the local lifting multiplicity of $\Lambda'$ at $Q'$ as in Subsection \ref{sec-wronskian} by computing initial solutions for 
%\[
%q = a + bx + cxy + O(t),
%\;\;
%\ell = y+ m+nx,
%\;\;
%W = (b+cx) - cnx +O(t).
%\]
%We have to take into account that the value of $m$ is fixed by the other tangency point $P$ on the horizontal end of $\Lambda'$ which we fixed. 
%We can see that there is a unique solution:
%\[
%\bar{x} = \frac{2\bar{a}}{\bar{c}\bar{m}-\bar{b}}, \bar{y} = -\frac{\bar{c}\bar{m}+\bar{b}}{2\bar{c}}, \bar{n} = -\frac{(\bar{c}\bar{m}-\bar{b})^2}{4\bar{a}\bar{c}}.
%\]
%The Jacobian is 
%$$\det\begin{pmatrix}
%  -\bar{c}\bar{n} &\bar{c} & -\bar{c}\bar{x} \\
% \bar{b}+\bar{c}\bar{y} & \bar{c}\bar{x} & 0 \\
% \bar{n} &1 & \bar{x}
% \end{pmatrix} = -2\bar{c}^2\bar{x}^2\bar{n}^2\neq 0.
% $$
%Thus, the local lifting multiplicity of $\Lambda'$ at $Q'$ is $1$.
\end{enumerate}

In case (b1), $\Lambda$ itself does not lift, but it belongs to a bounded subfamily as in case (a) for which the two boundary elements lift just as before (see Figure \ref{fig-combinations} (b1)).

In case (b2), $\Lambda$ cannot be varied in a nontrivial family fixing $P$ and maintaining a second tangency point. The local lifting multiplicity at $Q$ is two (see Figure \ref{fig-combinations} (b2)).

In any case, we see that we can vary $\Lambda$ in a (possibly trivial) family of tropical lines such that the sum of the local lifting multiplicities is $2$.
By symmetry, we obtain the same by fixing $Q$, and thus altogether we get a family such that the sum of the multiplicities of all members that lift equals $4$.

\begin{figure}
\input{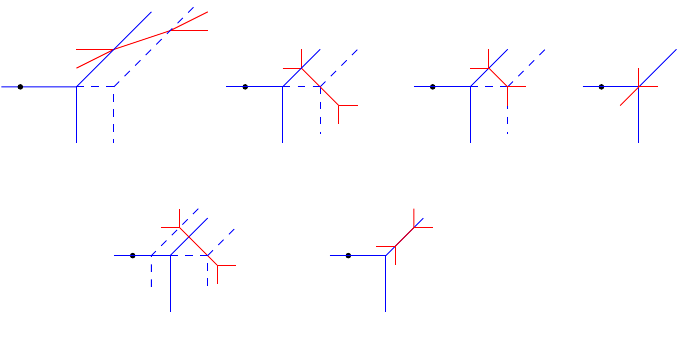_t}
\caption{Possible locations of tangency points.}
\label{fig-combinations}
\end{figure}

If $P$ and $Q$ are in the interior of the same end of the line, then by the genericity assumption the line $\Lambda$ does not lift (this includes the case where a segment extends all the way to the vertex). 
By moving the vertex of $\Lambda$ until one of the tangency points is at its vertex, we get a bitangent that possibly lifts, and we deal with this case next. 

If $P$ is in the interior of (without restriction) the horizontal end, and $Q$ is at the vertex of $\Lambda$, then we have to consider the following cases:

\begin{enumerate}
\item[(a2')] As in case (a2) above, $Q$ can be in the interior of an edge of $\Gamma$. We can vary $\Lambda$ in a one-dimensional family maintaining tangency until the diagonal end meets a vertex as in Figure \ref{fig-combinations} (a2). As in case (a2), for both boundary points of the family, the local lifting multiplicity of $\Lambda$ (resp.\ $\Lambda'$) at $Q$ (resp.\ $Q'$) is one.
\item[(a3')] $Q$ can be at a vertex of $\Gamma$ which intersects $\Lambda$ non-transversally. 
If we can move $\Lambda$ maintaining the tangency, then this is the behaviour we have studied in case (a3) before.
As we have seen in (a3), there is only one possible dual polygon for such a vertex. There, we have also computed the local lifting multiplicity of $\Lambda$ at $Q$ to be one. As above, $\Lambda$ can be varied in a bounded one-dimensional family such that the other boundary point is the only one with nontrivial local lifting multiplicity at the respective intersection point, and the local lifting multiplicity equals one.

If we cannot move $\Lambda$ maintaining the tangency, then there is only one case to check up to symmetry, namely that the vertex of $\Gamma$ is dual to a polygon with vertices $(1,0)$, $(2,0)$ and $(0,1)$. This is true, since we can without restriction assume that the vertical end of $\Lambda$ overlaps with an edge of $\Gamma$, and since we have an intersection of multiplicity two. So, we can assume $q=ax+bx^2+cy+O(t)$, where $a$, $b$ and $c$ have valuation zero. Using Proposition \ref{prop-vertexofboth} (after a suitable coordinate change), resp.\ a direct computation, we conclude that the local lifting multiplicity is two.
%\hannah{The following computation can probably be deduced from Case (5), only after coordinate changes though, since here $m$ is fixed while in case (5) n is fixed. Should we try to conclude this from Case (5)?} \yoav{ here, after a change of variable we have $i=0,j=1,k=1,l=1$, and the number of lifts is $2$, which again, is compatible with the direct computation.}
%We have $\ell=y+m+nx$ where $m$ is fixed by $P$, and we care for solutions for $n$. Using the Wronskian as in Subsection \ref{sec-wronskian}, we get $W=a+2bx-cn+O(t)$. We replace $y$ by $-m-nx$ in $q$, and then $\bar{c}\bar{n}$ by $\bar{a}+2\bar{b}\bar{x}$, obtaining $-\bar{b}\bar{x}^2-\bar{c}\bar{m}+O(t)=0$. We can solve for the initials obtaining $$\bar{x}=\pm \sqrt{\frac{\bar{c}\bar{m}}{\bar{b}}} , \bar{n}=\frac{\bar{a}}{\bar{c}}+2\frac{\bar{b}}{\bar{c}}\pm\sqrt{\frac{\bar{m}}{\bar{b}\bar{c}}}\mbox{ and }\bar{y}=\mp \bar{a}\sqrt{\frac{\bar{m}}{\bar{b}\bar{c}}}\mp 2\bar{b}\sqrt{\frac{\bar{m}}{\bar{b}\bar{c}}}-\frac{\bar{m}}{\bar{b}}-\bar{m}.$$ The Jacobian can be computed to be $-2\bar{b}\bar{c}\bar{x}\neq 0$. We conclude that the local lifting multiplicity is two. 
An example for this case can be seen in Figure \ref{fig-bigquartic}; it is the line with vertex at the point labeled 3.

\item[(a4)] Finally, $Q$ can be at a vertex of $\Gamma$ at which $\Gamma$ and $\Lambda$ intersect transversally (see Figure \ref{fig-combinations} (a4)). Then locally we are in case (5) of Theorem \ref{Thm:Lifting}. $\Lambda$ does not vary in a (nontrivial) family maintaining the tangency: if we move its vertex left, it is not locally tangent, if we move it to the right, the two tangency points are in the interior of the same end of the tropical line which therefore does not lift by Theorem \ref{Thm:Lifting}. The local lifting multiplicity of $\Lambda$ at $Q$ is $2$ by Theorem \ref{Thm:Lifting}.
\end{enumerate}
\noindent As before, we can see that a tropical bitangent varies in a (possibly trivial) bounded family for which only the boundary points have a nontrivial local lifting multiplicity, and the sum of the local lifting multiplicities equals $2$ in each case.

\end{proof}

\begin{example}\label{ex-quartic}
To illustrate the main result of this section, we find the liftable bitangents of an explicit tropical quartic. Figure \ref{fig-bigquartic} shows a tropicalized quartic satisfying the requirements of Theorem \ref{thm-quartics} (with defining equation (\ref{eq-exq})), and the vertices of the tropical bitangent lines that lift. Note that the vertices labelled $2a$ and $2b$ belong to tropical lines in the same equivalence class, they each lift with multiplicity $2$. All other vertices belong to the unique member of the respective equivalence class that lifts, and they each lift with multiplicity $4$.
\begin{figure}
\input{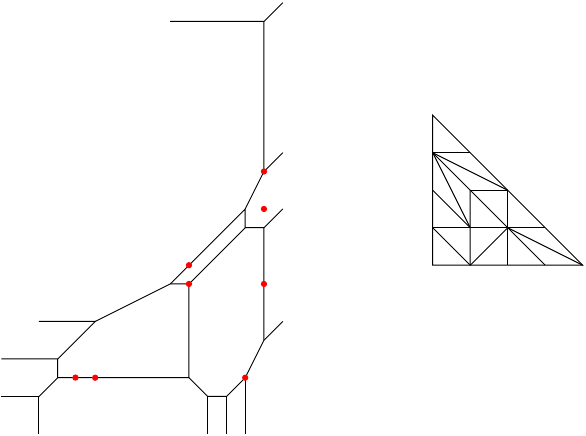_t}
\caption{A plane tropical quartic and the vertices of its seven bitangent lines. On the right, the dual Newton subdivision.}
\label{fig-bigquartic}
\end{figure}
\end{example}

\begin{remark}
As exhibited by Akshay Tiwary and Wanlong Zheng during a FUSRP (Fields Undergraduate Summer Research Program) at the Fields Institute \cite{TZ}, there are smooth tropical quintic and sextic curves such that the number of lifts is not the same for all the equivalence classes of bitangents. In particular, it is not true anymore that every bitangent class lifts $4$ times. It is still conjectured that the number of lifts of every bitangent class is divisible by $4$, and that there is a finer equivalence relation for which every class lifts $4$ times. 
\end{remark}

%\yoav{Here's a question that I find interesting (although I don't intend for us to answer it here or anything): if we mark the vertex of every tropical bitangent, we get a certain polygonal subset of $\RR^2$, with special points corresponding to the liftable bitangents. Is there anything we can say about this set?  Can we predict the lifting multiplicity of a bitangent from the geometry of the set?}

\section {Lifting real bitangents}\label{sec-realbitangents}

The question of how many bitangent lines of a real quartic are real dates back to Pl\"ucker. The answer depends on the topology of the real curve, a topic studied intensely in real algebraic geometry in the context of Hilbert's sixteenth problem. A real quartic can have between $0$ and $4$ ovals. It has $4$ real bitangents if it has $0$, $1$ or $2$ nested ovals, $8$ if it has $3$ non-nested ovals, $16$ if it has $3$ ovals and $28$ if it has $4$ ovals \cite{PSV}. 

By the Tarski principle \cite{JL89}, the field of Puiseux series with real coefficients is equivalent to the reals, so we can count real bitangents by lifting tropical bitangents to $L$. This principle has been applied to other problems in tropical geometry, see e.g.\ \cite{ABGJ, MMS15}. By Viro's Patchworking method, we can read off the topology of a real quartic (close to the tropical limit) from its tropicalization and the signs of the coefficients \cite{Viro, IMS09}. 

The question which of the $7$ tropical bitangent classes lifts, and how many lifts it has, can be answered for concrete cases with our lifting approach. Thus we lay the groundwork for a tropical enumeration of real bitangents. Before we can deduce Pl\"ucker's real bitangent count using tropical geometry, a major combinatorial classification task has to be completed. The following example illustrates the applicability of our methods to tropical counts of real bitangents. 

As in the proof of Theorem \ref{thm-quartics}, the local (complex) lifting multiplicity at each tangency point of a tropical bitangent was either $1$ or $2$. If a local lifting multiplicity is $1$, then the unique solution has to be real if the input data is real. If a local lifting multiplicity is $2$, our solutions for initials involve a square root, and so there can be either $2$ or no real solution, depending on the sign of the radicand.

%Going through the cases we need to consider in Theorem \ref{thm-quartics} for lifting bitangents of a tropicalized quartic, we can see that the $4$ complex lifts we have for each bitangent class appear as sums of products with two factors each (the local lifting multiplicities for the two tangency points) which are either $1$ or $2$. If a local lifting multiplicity is $1$, then the unique solution has to be real if the input data is real. If a local lifting multiplicity is $2$, our solutions for initials involve a square root, and so there can be either two or no real solution, depending on the sign of the radicand.

\subsection{A comprehensive example for real lifts of tropical bitangents to a tropicalized quartic}
Consider the quartic from example \ref{ex-quartic}, given by equation (\ref{eq-exq}) over the reals. We now go through the seven tropical bitangents depicted in Figure \ref{fig-bigquartic} and determine the number of real lifts for each. We denote the tropical bitangent with vertex at $i$ by $\Lambda_i$.

For $\Lambda_1$, we have to use the modification technique (II) described in Subsection \ref{subsec-modification}. We use the equation $z-x-y+ty+t^2y-1$ to re-embed. Projecting to the $yz$-coordinates, we obtain a curve with two tangency points as in Lemma \ref{nonRegularIntersection} that each give two lifts for the two coefficients of the bitangent. 
The (relevant part of the) projection of the re-embedded curve to the $yz$-plane is depicted in Figure \ref{fig-bitangent1modified}.
\begin{figure}
\input{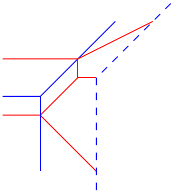_t}
\caption{The re-embedded curve with equation (\ref{eq-exq}), modified at bitangent $1$ and projected to the $yz$-plane. In blue, the bitangent line for which we compute lifts.}
\label{fig-bitangent1modified}
\end{figure}
We compute the lifts for the tangency point $(-1,1)$. The local equation of the tropicalized curve is $q=-t+t^3y^2-z$, the local equation of the line is $\ell=z+k\cdot y$, and so for the Wronskian as in Subsection \ref{sec-wronskian} we get $W=2t^3y+k$. Solving these equations, we obtain a negative radicand for $y$, which yields two non-real solutions for the coefficient $k$ of the modified bitangent line. Thus there are no real lifts for $\Lambda_1$.

For $\Lambda_{2a}$ and $\Lambda_{2b}$, we use the modification technique (I) described in Subsection \ref{subsec-modification}. Passing to the projection of interest, we can immedialy substitute $x$ with $x-\frac{1}{3t^4}$. The solutions for the initials we obtain from Lemma \ref{nonRegularIntersection} involve the square root of the product of the constant term and the $y^2$-coefficient after this substitution, whose initials are $1/81$ resp.\ $1/9$, so the radicand is positive and we get two real solutions for the $x$-coefficient of $\ell$ in each case. Since the bitangents with vertices at $2a$ and $2b$ each have local lifting multiplicity one at the other tangency point, we get real solutions for the constant coefficient of $\ell$ also and in conclusion, the bitangent class $2$ with the two bitangents $\Lambda_1$ and $\Lambda_2$ lifts to 4 real bitangents.

For $\Lambda_3$, we use the modification technique (I), substituting $y$ with $y-t^5$. The new constant coefficient has leading term
 $-t^{12}$, the new $x^2$-coefficient has leading term $-t^5$, and the new $xy$-coefficient has leading term $1$. Using the solutions computed in Lemma \ref{nonRegularIntersection} we obtain for the second term of $m$ the two real solutions $\pm 2 t^{\frac{17}{2}}$. We use this to set up the local equations for computing lifts of $\ell$ locally around the second tangency point:
\begin{displaymath}
q=t^{11}x^4+t^8x^3+x^2y\,;\; \ell=y+t^5\pm 2t^{\frac{17}{2}}+nx;\;\;W=2t^{11}x+t^8-n.
\end{displaymath}
Solving for $x$, $y$ and $n$ we obtain two imaginary solutions for $n$, so $\Lambda_3$ has no real lift.

For $\Lambda_4$, we substitute $y$ with $y-x-tx$. Then the $x^4$-coefficient starts with $t^3$, the $x^2$-coefficient with $-1$, the $x^2y$ with $1$ and the $x^3$ with $-2t^2$ (which is not visible). We can thus use the solutions computed in Lemma \ref{nonRegularIntersection}, and obtain two non-real solutions. Thus, $\Lambda_4$ does not have any real lifts.

For $\Lambda_5$, we substitute $y$ with $\frac{1}{t}\cdot (y-x+tx+t^2x)$. Then the $x^4$-coefficient starts with $t^3$, the $x^2$-coefficient with $-t$ and the $x^2y$-coefficient with $3t^4$ (with the $x^3$-coefficient not visible again). We can use the solutions computed in Lemma \ref{nonRegularIntersection}, and obtain two non-real solutions. Thus, $\Lambda_5$ does not have any real lifts. 

For $\Lambda_6$, we need to use the modification technique (II). We use the equation $z-3t^4x-y-1$ to re-embed. Projecting to the $yz$-coordinates, we obtain a curve with a tangency point as in Lemma \ref{nonRegularIntersection}, with local equation $q=t^3-9y^2+xy$, so using the lifting equations from Lemma \ref{nonRegularIntersection} we can see that $\Lambda_6$ does not have any lift, since the radicand is $1\cdot (-9)$.

For $\Lambda_7$, we use modification technique (II) with the equation $z-(x+1+ty+t^2y)$. Projecting to the $yz$-coordinates, we obtain a curve with two tangency points as in Lemma \ref{nonRegularIntersection}, of which one has local equation $q=t^5 y^4-y^2+xy^2$, so the radicand is $(-1)\cdot 1$ and the bitangent has no real lift.
Altogether we have $4$ real lifts for the tropical bitangents of the tropicalization of the quartic defined by equation (\ref{eq-exq}) --- the $4=2+2$ lifts of $\Lambda_{2a}$ and $\Lambda_{2b}$ in Figure \ref{fig-bigquartic}.

We compare this to the answer expected by Viro's patchworking method. Figure \ref{fig-Viro} shows the result of the combinatorial patchworking method for equation (\ref{eq-exq}). For our computations, we used the online tool computing the topology of a patchworked curve (authored by B. El-Hilany, J. Rau and A. Renaudineau) 
that can be found at 
\begin{displaymath}\mbox{https://www.math.uni-tuebingen.de/user/jora/patchworking/patchworking.html}.\end{displaymath} It builds on the Viro.Sage package \cite{Viro:Sage}.

\begin{figure}
\input{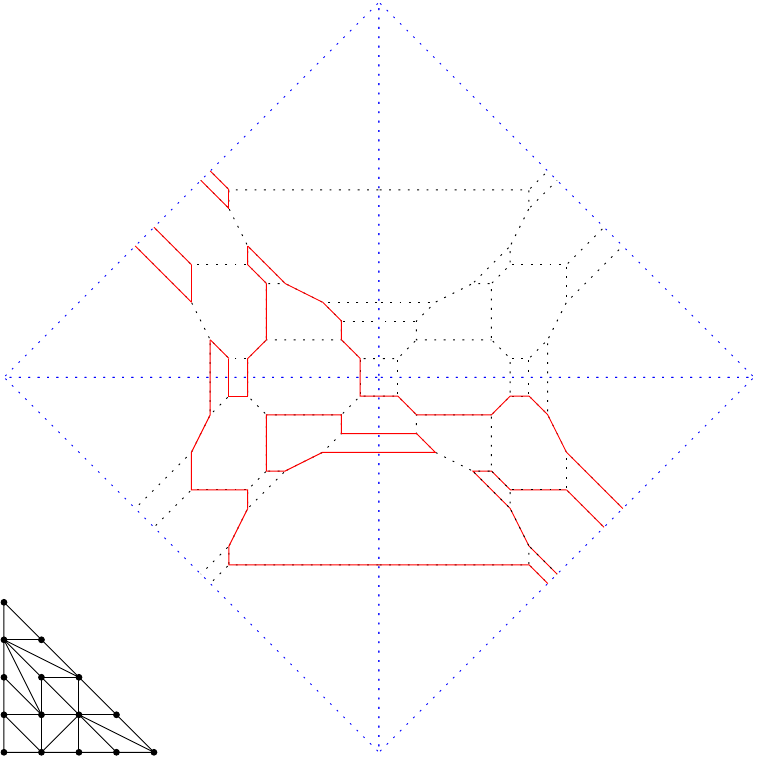_t}
\caption{Combinatorial patchworking for the curve defined by equation (\ref{eq-exq}) reveals two nested ovals.}
\label{fig-Viro}
\end{figure}

We can see that a real curve close to the tropical limit has two nested ovals, so by \cite{PSV} we expect $4$ bitangents. This confirms our lifting computation above.

\subsection{Further study}

We can deduce from the example above that the number of real lifts of a tropical bitangent cannot be computed solely from the combinatorial properties of the tropicalized curve: the signs of the coefficients play a role in the solution, but even more challenging, the signs after coordinate changes (which we determine using modifications) have to be controlled. Before our methods can be souped up to a tropical way to count real bitangents, this obstacle has to be overcome, and a combinatorial classification of real lifts has to be mastered.

Nevertheless, in all our example computations, we observed that a tropical bitangent class,  either has $4$ real lifts (i.e.\ all complex lifts are real), or no real lift. This was true even in cases where a bitangent class had several liftable representatives, e.g.\ $3$ representatives that lift with (complex) lifting multiplicity $1$, $1$ and $2$. (A priori one could be tempted to think that we just need to pick signs in such a way that the representative with complex lifting multiplicity $2$ has only imaginary solutions --- that way we would produce an example with a class which has $1+1+0=2$ lifts --- but in our example classes this was not possible.)

Considering that the number of real bitangents to a quartic is always divisible by $4$, and building on our examples and computations, we set up the following conjecture:

\begin{conjecture}
A tropical bitangent class of a tropicalization of a generic smooth quartic defined over the real Puiseux series has either $0$ or $4$ real lifts.
\end{conjecture}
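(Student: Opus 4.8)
The plan is to begin from the parity constraint supplied by complex conjugation and then to upgrade it to the sharp ``all or nothing'' statement. Since the defining polynomial $q$ has coefficients in the real Puiseux series, complex conjugation (acting on the $\CC$-coefficients of the Puiseux expansions) is an involution on the set of complex bitangents of $C$. It preserves valuations, hence fixes the tropicalization, and therefore acts within each tropical bitangent class. By Theorem \ref{thm-quartics} each class has exactly four complex lifts; conjugation permutes them, the real lifts are precisely its fixed points, and the non-real ones occur in conjugate pairs. Thus, a priori, the number of real lifts of a class lies in $\{0,2,4\}$, and the whole content of the conjecture is to exclude the value $2$.

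Next I would dispose of the partitions in which $2$ cannot occur for formal reasons, using the case analysis in the proof of Theorem \ref{thm-quartics} together with the observation that a tangency point of local lifting multiplicity $1$ always contributes a real lift, whereas a point of local multiplicity $2$ contributes $2$ or $0$ real lifts according to the sign of the radicand in Lemma \ref{nonRegularIntersection}. In the partition $1+1+1+1$ every representative has both local multiplicities equal to $1$, so all four lifts are real. In the star-shaped partition $4$ (Proposition \ref{prop-starshaped}) the four lifts arise as independent binary choices at two tangency points, so the real count is a product $(2\text{ or }0)\cdot(2\text{ or }0)\in\{0,4\}$. Hence the value $2$ can arise only in the partitions $2+2$ and $2+1+1$, and the conjecture reduces to two sign statements: in a $2+1+1$ class the radicand of the unique multiplicity-$2$ representative must be positive, and in a $2+2$ class the two radicands must have the same sign.

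The concrete route to these statements is to make the radicands explicit. By Lemma \ref{nonRegularIntersection}, the radicand at a multiplicity-$2$ point is, after the real modification used in Theorem \ref{thm-quartics}, essentially the product $\overline a\,\overline c$ of two coefficients of the projected equation $\tilde q = a + bx + cx^2 + dxy + O(t)$, and real lifts exist exactly when $\overline a\,\overline c>0$. I would therefore track, through the explicit coordinate change $z = y + a_0 + a_1 t + \dots$ (respectively the cone-feeding change given by the series $\alpha,\beta$), how the signs of the relevant coefficients of $q$ propagate to $\overline a$ and $\overline c$, and show that the product of the radicands attached to the representatives of a single class is a manifestly positive real quantity (ideally a perfect square), which forces the sign coordination. \textbf{The main obstacle lies exactly here:} the higher-order terms of the modification are themselves solutions of auxiliary equations in the coefficients of $q$, so the sign of $\overline a\,\overline c$ is not visible from $q$ directly. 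Controlling it appears to require a complete combinatorial classification of the finitely many local configurations occurring in each partition type, together with an explicit sign computation for each one --- precisely the classification task flagged as open in the paper.

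A cleaner, conceptual route would bypass the coordinate changes altogether. The four lifts of a class correspond to the four odd theta characteristics of $C$ lying over a fixed effective tropical theta characteristic \cite{Zh,JL}, and they form a torsor under the rank-two subgroup $V\subset \mathrm{Jac}(C)[2]$ of $2$-torsion points that are trivial on the skeleton. If one can show that $V$ is defined over $\RR$, i.e.\ $V\subset\mathrm{Jac}(C)(\RR)[2]$, then complex conjugation acts $V$-equivariantly on the torsor of four theta characteristics; a $V$-equivariant involution of a $V$-torsor is translation by a fixed element $c\in V$, whose fixed-point set is either all four points (when $c=0$) or empty (when $c\neq 0$). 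This yields the dichotomy $\{0,4\}$ at once, and also explains the numerical compatibility with the real bitangent counts $4,8,16,28$ of \cite{PSV}. I expect the reality of $V$ to be the crux of this second approach, and I regard it as the most promising unified way to settle the conjecture.
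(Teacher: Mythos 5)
First, note that the statement you are proving appears in the paper only as a conjecture: the authors explicitly leave the required sign/combinatorial classification open, so there is no proof in the paper to compare yours against. Judged on its own terms, your proposal is a correct and genuinely useful reduction, but not a proof. The conjugation argument pinning the number of real lifts to $\{0,2,4\}$ is sound, as is the observation that a local lifting multiplicity $1$ forces a real solution (the unique Hensel lift of real data is conjugation-invariant), while a local multiplicity $2$ contributes $2$ or $0$ real lifts according to the sign of a radicand. Your partition analysis correctly disposes of the types $1+1+1+1$ and $4$ and isolates the entire content of the conjecture in two sign statements for the types $2+2$ and $2+1+1$. This matches, and slightly sharpens, the discussion in Section \ref{sec-realbitangents}.

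The gap is that neither of your two routes closes these remaining cases. In the first route you say so yourself: the radicand $\overline a\,\overline c$ of Lemma \ref{nonRegularIntersection} lives in coordinates obtained after a modification whose higher-order terms are determined recursively from the coefficients of $q$, so its sign is not visible from $q$ directly; controlling it is exactly the classification task the paper flags as open, and the suggestion that the product of the radicands of a class ``should be a perfect square'' is a hope rather than an argument. The second route rests on a structural claim that is unjustified and, as stated, doubtful: for a genus-$3$ curve with smooth (hence totally degenerate) tropicalization, the kernel of $\mathrm{Jac}(C)[2]\to\mathrm{Jac}(\Gamma)[2]$ has order $2^{3}=8$, so the fibre of tropicalization on theta characteristics has $8$ elements, of which the $4$ odd ones are singled out by a parity condition; they are not obviously (and not in general) a coset of a rank-two subgroup $V$, so the torsor-and-translation argument does not get off the ground. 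Even granting that structure, the reality of $V$ --- which you correctly identify as the crux --- is asserted, not proved. In short, the proposal reduces the conjecture cleanly and identifies where the difficulty sits, but leaves its essential content open.
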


%\bibliographystyle{plain}
%\bibliography{Bibquartics}

\end{document}